\newcommand{\one}[1]{\mbox {\bf 1}_{\{#1\}}}
\newcommand{\witi}{\widetilde}
\newcommand{\fracd}[2]{\frac {\displaystyle #1}{\displaystyle #2 }}
\newcommand{\nn}{{\mathbb N}}
\newcommand{\rr}{{\mathbb R}}
\newcommand{\zz}{{\mathbb Z}}
\newcommand{\calf}{{\mathcal F}}
\newcommand{\call}{{\mathcal L}}
\newcommand{\calr}{{\mathcal R}}
\newcommand{\caln}{{\mathcal N}}
\newcommand{\calx}{{\mathcal X}}
\newcommand{\calz}{{\mathcal Z}}
\newcommand{\caly}{{\mathcal Y}}
\newcommand{\veps}{\varepsilon}
\newcommand{\beq}{\begin{eqnarray*}}
\newcommand{\feq}{\end{eqnarray*}}
\newcommand{\beqn}{\begin{eqnarray}}
\newcommand{\feqn}{\end{eqnarray}}
\newcommand{\as}{\mbox{\rm a.~s.}}
\newcommand{\io}{\mbox{\rm i.~o.}}
\newtheorem{theorem}{Theorem}
\makeatletter \@addtoreset{theorem}{section}\makeatother
\newcounter{vadik}
\newtheorem{lemma}[theorem]{Lemma}
\newtheorem{assume}[theorem]{Assumption}
\newtheorem*{theorem*}{Theorem}
\newtheorem*{lemmaa*}{Lemma~A}
\newtheorem{proposition}[theorem]{Proposition}
\newtheorem{corollary}[theorem]{Corollary}
\newtheorem{remark}[theorem]{Remark}
\title{On random coefficient INAR(1) processes}
\author{Alexander Roitershtein\footnote{Department of Mathematics, Iowa State University, Ames, IA 50011; e-mail: roiterst@iastate.edu}
\and Zheng Zhong\footnote{Department of Mathematics, Iowa State University
Ames, IA 50011; e-mail: zz109@iastate.edu}}
\begin{document}
\maketitle
\begin{abstract}
The random coefficient integer-valued autoregressive process was
introduced by Zheng, Basawa, and Datta in \cite{rcinar4}. In this paper we study
the asymptotic behavior of this model (in particular, weak limits of extreme values
and the growth rate of partial sums) in the case where the additive term in the underlying
random linear recursion belongs to the domain of attraction of a stable law.
\end{abstract}
{\em MSC2000: } 62M10, 60J80, 60K37, 60F05.\\
\noindent
{\em Keywords:} Models for count data, thinning models, INAR models,
branching processes, random environment, asymptotic distributions, limit theorems.
\section{Introduction}
\label{intro}
In this paper we consider a {\em first-order random coefficient integer-valued autoregressive} (abbreviated as RCINAR(1))
process that was introduced by Zheng, Basawa, and Datta in \cite{rcinar4}. While the article \cite{rcinar4} as well as a subsequent
work have been focused mostly on direct statistical applications of the model,
the primary goal of this paper is to contribute to the understanding of its probabilistic structure.
\par
Let $\Phi:=(\phi_n)_{n\in\zz}$ be an i.i.d. sequence of reals, each one taking values in the closed interval $[0,1].$
Further, let $\calz:=(Z_n)_{n\in\zz}$ be a sequence of i.i.d. integer-valued
non-negative random variables, independent of $\Phi.$ The pair $(\Phi,\calz)$ is referred to in \cite{rcinar4} as
a sequence of random coefficients associated with the model. 
\par
Let $\zz_+$ denote the set of non-negative integers
$\{n\in\zz:n\geq 0\}.$ The RCINAR(1) process $\calx:=(X_n)_{n\in\zz_+}$ is then defined as follows.
Let $B:=(B_{n,k})_{n\in\zz,k\in\zz}$ be a collection of Bernoulli random variables independent of $\calz$ and such that,
given a realization of $\Phi,$ the variables $B_{n,k}$ are independent and
\beq
P_\Phi(B_{n,k}=1)=\phi_n \qquad \mbox{and} \qquad P_\Phi(B_{n,k}=0)=1-\phi_n,\qquad \forall~k\in \nn,
\feq
where $P_\Phi$ stands for the underlying probability measure conditional on $\Phi.$ Let $X_0=0$ and consider the following linear recursion:
\beqn
\label{version}
X_n=\sum_{k=1}^{X_{n-1}} B_{n,k}+Z_n,\qquad n\in \nn,
\feqn
where we make the usual convention that an empty sum is equal to zero. To emphasize the formal dependence on
the initial condition, we will denote the underlying probability measure (i.e., the joint law of $\Phi,\calz,B,$ and $\calx$)
conditional on $\{X_0=0\}$ by $P_0$ and denote the corresponding expectation by $E_0.$ For the most of the paper 
we will consider a natural initial assumption $X_0=0$ and hence consistently state our results for the measure $P_0.$ 
We remark however that all our results (stated below in Section~\ref{results}) are robust with respect to the initial condition $X_0.$
\par
The RCINAR(1) process $\calx$ defined by \eqref{version}
is a generalization of the {\em integer-valued autoregressive of order one} (abbreviated as INAR(1)) model,
in which the parameters $\phi_n$ are deterministic and identical for all $n\in\zz.$
The model introduced in \cite{rcinar4} has been further extended in \cite{rcinar7,rcinar3,rcinar5,rcinar6,rcinar10,rcinar,rcinar4}.
We refer the reader to \cite{asurvey,esurvey,bsurvey,survey} for a general review of integer-valued (data counting)
time series models and their applications.
\par
Formally, RCINAR(1) can be classified as a special kind of branching processes with immigration in the random environment
$\Phi,$ cf. \cite{mbpire}. In particular, the process can be rigorously constructed on the state space of ``genealogical trees''
(see \cite[Chapter~\setcounter{vadik}{6}$\mbox{\Roman{vadik}}$]{br-book}).
The random variable $X_n$ is then interpreted as the total number of individuals present at generation $n.$
At the beginning of the $n$-th period of time, $Z_n$ immigrants enter the system. Simultaneously and independently of it, each particle from the previous generation
exits the system, producing in the next generation either one child (with probability $\phi_n$) or none
(with the complementary probability  $1-\phi_n$)
\label{note}\footnote{Alternatively, one can think that each particle either survives to the next generation (with probability $\phi_n$)
or dies out (with probability $1-\phi_n$).}.
The branching processes interpretation is a useful point of view on RCINAR(1)
which provides powerful tools for the asymptotic analysis of the model.
\par
Let $\caln_+$ denote the set of non-negative integer-valued random variables in the underlying probability space.
The first term on the right-hand side of \eqref{version} can be thought of as the result of applying to $X_n$ a {\em binomial thinning operator}
which is associated with $\phi_n.$ More precisely, using the following operator notation introduced by Steutel and van~Harn in \cite{operator}:
\beq
\phi_n \circ X:=
\sum_{n=1}^X B_{n,k},\qquad X\in\caln_+,
\feq
equation \eqref{version} can be written as
\beqn
\label{ost}
X_n=\phi_n\circ X_{n-1}+Z_n,\qquad n\in\nn.
\feqn
This form of the recursion indicates that an insight into the probabilistic structure of the RCINAR(1) process
can be gained by comparing it to the classical AR(1) {(\em first-order autoregressive)} model for real-valued data.
The latter is defined by means of i.i.d. pairs $(\phi_n,Z_n)_{n\in\zz}$ of real-valued random coefficients, through
the following linear recursion:
\beqn
\label{ar1}
Y_n=\phi_n Y_{n-1}+Z_n, \qquad n \in \nn.
\feqn
In this paper we explore one of the aspects of the similarity between the RCINAR(1)
and AR(1) processes. Namely, we show in Theorem~\ref{main} below that if $Z_n$ are in the domain of attraction of a stable law
so is the limiting distribution of $X_n,$ and then consider some implications
of this result for the asymptotic behavior of the sequence $X_n.$
A prototype of our Theorem~\ref{main} for AR(1) processes has been obtained in \cite{grey,trakai75}.
Our proof of Theorem~\ref{main} relies on an adaptation of the technique which has been developed
in \cite{grey}.
\par
We conclude the introduction with the following remarks on the motivation for our study. 
Although it appears that most of our results (stated in Section~\ref{results} below) could be extended to a more general type of
processes than is considered here, we prefer to focus on one important model. 
It is well-known that certain \emph{quenched} characteristics of branching processes in random environment satisfy the linear 
difference equation \eqref{ar1}. In two different settings, both yielding stationary solutions 
to \eqref{ar1} with regularly varying tails, this observation has been used to obtain the asymptotic behavior 
of the extinction probabilities in a branching processes in random environment \cite{grey,bpre} 
and the cumulative population for branching processes in random environment with immigration \cite{kks1,mbpire}. 
These studies make it appealing to consider a model like \eqref{ost} which evidently 
combines features of both branching processes in random environment (with immigration) and AR(1) time series.      
\par
In general, probabilistic analysis of the future behavior of average and extreme value characteristics of the underlying system might be handy for typical real-world applications of a counting data model. Our results thus constitute a natural complement to the statistical inference tools developed for the RCINAR(1) processes in \cite{rcinar4}. For the sake of example, consider 
\begin{itemize}
\item [1)] maximal number of unemployed per month in an economy,
according to the model discussed in \cite[Section~1]{rcinar4}; 
\item [2)]
a variation of the model for city size distributions 
studied in \cite{city,city1} where the underlying AR(1) equation is replaced by its suitable integer-valued 
analogue. More precisely, while it is argued in \cite{city,city1} that the evolution of the normalized (to the total size of the population) 
size of a city $Y_n$ obeys \eqref{ar1}, we propose \eqref{ost} as a possible alternative model for non-normalized size of
the city population $X_n,$ where $\phi_n$ is an average proportion of the population which will continue to live 
in the city in the observation epoch $n+1$ and $Z_n$ is the factor accumulating both the natural population growth and migration; 
\item [3)] total number of arrivals in the random coefficient variation of
the queueing system proposed in \cite[Section~3.2]{applic4}. 
\end{itemize}
On the technical side, in contrary to \cite{rcinar4}, we do not restrict ourselves to a setup with $E[Z_0^2]<\infty.$ 
This finite variance condition apparently does not pose a real limitation on the possibility of 
applications of RCINAR(1) to, say, the unemployment rate and the cities growth models mentioned above. 
In both the cases, it is reasonable to assume that the innovations $Z_n$ are typically relatively small
comparing to $X_n$ and, furthermore, large fluctuations of their values are not very likely to occur. 
However, the situation seems to be quite different if one wishes to apply the theory of RCINAR(1) processes 
to the models of queueing theory (as it has been done in \cite{applic4}) when the latter are assumed to 
operate under a heavy traffic regime. See, for instance, 
\cite{qinput,qinput1,qinput3,qinput4,qinput5,qinput7} and \cite{input,input3,input4} for queueing network models 
where it is assumed that the network input has sub-exponential or, more specifically,
regularly varying distribution tails (typically resulting from the distribution of the length of ON/OFF periods). 
We remark that the extensive literature on queueing networks in a heavy traffic regime is
partially motivated by the research on the Internet network activity where it has been shown that in many instances a web traffic 
is well-described by heavy-tailed random patterns; see, for instance, \cite{inet,inet1,inet3,inet4}. 
\section{Statement of results}
\label{results}
This section contains the statement of our main results, and is structured as follows.
We start with a formulation of our specific assumptions on the coefficients $(\Phi,\calz)$ of the model
(see Assumptions~\ref{assume1} and~\ref{assume4} below). Proposition~\ref{convergence} then
ensures the existence of the limiting distribution of $X_n$ and also states formally some related 
basic properties of this Markov chain. Theorem~\ref{main} is concerned with the asymptotic of the tail of the limiting distribution in the case where
the additive coefficients $Z_n$ belong to the domain of attraction of a stable law.
The theorem shows that in this case, the tails of the limiting distribution inherit the structure of the tails of $Z_0.$
This observation leads us to Theorem~\ref{extremes}, which is an extreme value limit theorem
for the sequence $(X_n)_{n\in\zz_+}.$ Weak convergence of suitably normalized partial sums of $X_n$ is the content of
Theorems~\ref{partial} and~\ref{partial4}. The proof of these limit theorems exploit the branching process
representation of a regenerative structure which is described by Proposition~\ref{rect}. Two curious implications of the
existence of this regenerative structure are stated in Propositions~\ref{aage} and~\ref{coalescence}.
The proofs of main theorems stated below in this section are given in Section~\ref{proofs} while the proofs of two auxiliary propositions are deferred
to the Appendix.
\paragraph{Specific assumptions on the random coefficients.}
Recall that a function $f:\rr\to\rr$ is called regularly varying if $f(t)=t^\alpha L(t)$ for some $\alpha\in\rr$ and a function $L$ such that
$\lim_{t\to\infty} L(\lambda t)/L(t)=1$ for all $\lambda >0.$ The parameter $\alpha$ is called the index of the regular variation.
If $\alpha=0,$ then $f$ is said to be slowly varying. We will
denote by $\calr_\alpha$ the class of all regularly varying real-valued functions with index $\alpha.$
We will impose the following assumption on the coefficients of the model defined by \eqref{version}.
\begin{assume}
\label{assume1}
\item [(A1)] $P(\phi_0=1)<1.$
\item [(A2)] For some $\alpha>0,$ there exists $\,h\in \calr_\alpha$ such that $\lim_{t\to\infty} h(t) \cdot P(Z_n>t)=1.$
\end{assume}
Throughout the paper we will assume (actually, without loss of generality in view of \emph{(A2)} and Theorem~1.5.4 in \cite{rvbook}
which ensures the existence of a non-decreasing equivalent for $h$)
that the the following condition is included in Assumption~\ref{assume1}:
\begin{flushleft}
\emph{(A3)} Let $h:(0,\infty)\to\rr$ be as in \emph{(A2)}. Then $\sup_{t>0} h(t) \cdot P(Z_n>t)<\infty.$
\end{flushleft}
\par
The assumption of heavy-tailed innovations (noise terms) in autoregressive models is quite common 
in the applied probability literature. It is a well-known paradigm that such an assumption yields a rich probabilistic structure 
of the stationary solution and allows for a great flexibility in the modeling of its asymptotic behavior. 
See for instance \cite{grey,trakai75}, more recent articles \cite{ht1,ht7,mrec5,ht5,mariana,ht3,ht4}, 
and references therein.    
\par
In a few occasions (including a central limit theorem stated below in Theorem~\ref{clt}) we will use
the following weaker version of Assumption~\ref{assume1}:
\begin{assume}
\label{assume4}
Condition (A1) of Assumption~\ref{assume1} is satisfied and, in addition, the following holds:
\item [(A4)] $E[Z_0^\beta]<\infty$ for some $\beta>0.$
\end{assume}
Assumption~\ref{assume4} is stronger than the usual $E(\log^+ |Z_0|) <+\infty,$ where $x^+:=\max\{x,0\}$ for $x\in\rr,$ 
which is essentially required for the existence and uniqueness of the stationary solution to \eqref{ost}. 
It can be seen through the formula $E[Z_0^\beta]=\int_0^\infty \beta x^{\beta-1}P(Z_0>x)dx$ (recall that $Z_0\geq 0$) that (A4) is basically equivalent 
to the assumption that the distribution tails of $Z_0$ are ``not too thick".  
\paragraph{Limiting distribution of $X_n.$}
Let $Y_n \Rightarrow Y_\infty$ stand for the convergence in distribution of a sequence of random variables $(Y_n)_{n\in\nn}$ to a random variable
$Y_\infty$ (we will usually omit the indication ``as $n\to\infty$"). We will use the notation $X=_D Y$ to indicate that
the distributions of random variables $X$ and $Y$ coincide under the law $P_0.$ For $X\in\caln_+$ define $\Pi_0\circ X:=X$
and, recursively, $\Pi_{k+1}\circ X:=\phi_{k+1}\circ \bigl(\Pi_k\circ X\bigr).$
This defines a sequence of random operators acting in $\caln_+$ as follows:
\beqn
\label{pik}
\Pi_k\circ X=\phi_k\circ \phi_{k-1}\circ\cdots \circ\phi_1 \circ X, \qquad X\in \caln_+.
\feqn
The existence of the stationary distribution for the sequence $\calx=(X_n)_{n\geq0}$ introduced
in \eqref{version} is the content of the following proposition.
\begin{proposition}
\label{convergence}
Let Assumption~\ref{assume4} hold. Then,
\item [(a)] The following series converges to a finite limit with probability one:
\beqn
\label{xlimit}
X_\infty:=\sum_{k=0}^\infty X_{0,k},
\feqn
where the random variables $(X_{0,k})_{k\in\zz_+}$ are independent, and $X_{0,k}=_D\Pi_k \circ Z_0$
for any $k\in\nn.$
\item [(b)] $X_n\Rightarrow X_\infty$ for any $X_0\in\caln_+.$ Here $(X_n)_{n\in\zz_+}$ is understood as the sequence
produced by the recursion rule \eqref{version} with an arbitrary initial value $X_0.$
\item [(c)] The distribution of $X_\infty$ is the unique distribution of $X_0$ which makes $(X_n)_{n\in\zz_+}$
into a stationary sequence.
\end{proposition}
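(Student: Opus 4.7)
My plan for Proposition~\ref{convergence} uses only soft ingredients: the first Borel--Cantelli lemma for part~(a), iteration of the recursion~\eqref{ost} together with the classical time-reversal trick for autoregressive recurrences in part~(b), and a soft consequence of~(b) for part~(c).

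For part~(a), the key estimate is that, conditionally on $Z_0$ and on $\phi_1,\ldots,\phi_k$, the random variable $\Pi_k\circ Z_0$ is Binomial with parameters $(Z_0,\phi_1\phi_2\cdots\phi_k)$. Using the elementary inequality $1-(1-s)^m\le\min\{1,ms\}\le(ms)^\beta$, valid for $\beta\in(0,1]$, $s\in[0,1]$ and $m\in\zz_+$,
\[
P(X_{0,k}\ge 1)=P(\Pi_k\circ Z_0\ge 1)\le E\bigl[(Z_0\phi_1\cdots\phi_k)^\beta\bigr]=E[Z_0^\beta]\,\bigl(E[\phi_0^\beta]\bigr)^k.
\]
I may assume the exponent~$\beta$ of~(A4) lies in $(0,1]$, as $Z_0\ge 0$ gives $E[Z_0^{\beta'}]<\infty$ for every $\beta'\le\beta$. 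Assumption~(A1) then forces $E[\phi_0^\beta]<1$, so the right-hand side is geometrically summable. The first Borel--Cantelli lemma (which requires no independence hypothesis and is therefore compatible with the independent construction of the $X_{0,k}$'s) yields $X_{0,k}=0$ eventually a.s., and since each $X_{0,k}$ is a.s.\ dominated by a finite copy of $Z_0$, the series $\sum_k X_{0,k}$ converges a.s.\ to a finite limit.

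For part~(b), I iterate~\eqref{ost} using the additive-thinning identity $\phi_n\circ(A+B)=\phi_n\circ A+\phi_n\circ B$, which is valid at the level of particles in the Bernoulli construction because disjoint sets of particles are thinned with independent Bernoullis. Induction on~$n$ produces
\[
X_n\;=\;\Pi_n\circ X_0+\sum_{j=1}^{n}\bigl(\phi_n\circ\phi_{n-1}\circ\cdots\circ\phi_{j+1}\bigr)\circ Z_j.
\]
The carry-over $\Pi_n\circ X_0$ tends to~$0$ in probability: conditionally on $\Phi$ and $X_0$ it is Binomial$(X_0,\phi_1\cdots\phi_n)$, and by~(A1) we have $E[\log\phi_0]\in[-\infty,0)$, so Kolmogorov's SLLN forces exponential decay of $\phi_1\cdots\phi_n$ to~$0$. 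The time reversal $(\phi_i,Z_i)\mapsto(\phi_{n+1-i},Z_{n+1-i})$ preserves the joint law by the i.i.d.\ property of the pairs $(\phi_i,Z_i)$ and identifies the remaining sum, in distribution, with the monotone partial sum $T_n:=\sum_{k=0}^{n-1}\Pi_k\circ Z_{k+1}$ of the series considered in part~(a) (fresh Bernoullis being used at each level). Since $T_n$ is non-decreasing and a.s.\ finite at infinity by the Borel--Cantelli estimate above, $T_n\uparrow T_\infty$ a.s.\ with $T_\infty=_D X_\infty$, and Slutsky's theorem then yields $X_n\Rightarrow X_\infty$ irrespective of~$X_0$.

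Part~(c) follows at once: any distribution~$\mu$ on $\caln_+$ rendering $(X_n)_{n\in\zz_+}$ stationary must be the law of every~$X_n$, and~(b) applied with $X_0$ drawn from~$\mu$ forces $\mu$ to coincide with the law of~$X_\infty$. The step that requires the most care is the time-reversal identification in~(b): the relabeling of the~$\phi_i$'s alone is trivial, but converting the resulting expression into the monotone partial sum~$T_n$ demands careful bookkeeping of the conditionally-independent Bernoullis used in each thinning, and it is this distributional identity that carries the a.s.\ monotone convergence of~$T_n$ over to the weak convergence of~$X_n$.
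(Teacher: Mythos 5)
Your proof is correct and, at its core (part (b)), follows the same route as the paper: decompose $X_n$ into the thinned initial condition plus the accumulated immigration terms, show the initial term vanishes, and use the time reversal $(\phi_i,Z_i)\mapsto(\phi_{n+1-i},Z_{n+1-i})$ to identify the remainder with the monotone partial sums of the series in (a). The differences are in (a) and (c). For (a) the paper applies Jensen's inequality conditionally on $(\Phi,\calz)$ to get $E[(\Pi_k\circ Z_k)^\beta]\le E[Z_0^\beta]\,(E[\phi_0^\beta])^k$ and concludes $E[X_\infty^\beta]<\infty$ by $\beta$-subadditivity; this moment estimate (see \eqref{estim}) is reused in the appendix proof of Proposition~\ref{rect}(b), so it buys a bit more than your Borel--Cantelli argument, which rests on the same geometric quantity via $1-(1-s)^m\le (ms)^\beta$ but yields only almost sure finiteness (which is all the proposition asks). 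For (c) the paper runs a pathwise comparison of two stationary solutions, $|X_n^{(1)}-X_n^{(2)}|\le\Pi_{n+1}\circ|X_0^{(1)}-X_0^{(2)}|\to 0$, whereas you obtain uniqueness as an immediate corollary of (b); both arguments establish only the uniqueness half, leaving the fact that the law of $X_\infty$ is itself stationary to the fixed-point identity $X=_D\phi_0\circ X+Z_0$ discussed after the proposition. One caveat you share with the paper: the identification $T_\infty=_D X_\infty$ is valid when the summands in \eqref{xlimit} are understood as conditionally independent given $\Phi$ (fresh Bernoulli families attached to a common environment), which is precisely the structure produced by your reversal and the reading the paper's own proof adopts when it writes $X_n=_P Z_0+\sum_{k=1}^{n-1}\Pi_k\circ Z_k$; with unconditionally independent copies, as the statement literally says, the reversed summands are positively correlated through the shared $\phi_j$'s and the distributional identification would fail in general, so it is worth stating this convention explicitly.
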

The proof of the proposition is deferred to the appendix. We remark that if $E[Z_0^2]<\infty$ is assumed, the above
statement is essentially Proposition~2.2 in \cite{rcinar4}. For a counterpart of this result for AR(1) processes see, for instance,
Theorem~1 in \cite{stationar1}. It is not hard to deduce from the above proposition the following corollary, whose proof is omitted:
\begin{corollary}
\label{corol}
Suppose that Assumption~\ref{assume4} holds, and let $\calx=(X_n)_{n\in\zz_+}$ be a random sequence defined by \eqref{version}.
Then $\calx$ is an irreducible, aperiodic, and positive-recurrent Markov chain whose stationary measure is supported on
a set of integers $\{k\in\zz_+: k\geq k_{\min}\},$ where $k_{\min}:=\min\{k\in\zz_+:P(Z_0=k)>0\}.$ In particular, $\calx$ is an ergodic sequence.
\end{corollary}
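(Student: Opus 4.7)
The plan is to deduce all four assertions from Proposition~\ref{convergence} together with the assumption (A1) that $P(\phi_0 < 1) > 0$. The technical engine is the following one-step ``die-off'' estimate, which I would derive first: for every $i \in \zz_+$ and every $j$ in the support of $Z_0$,
\[
P(X_{n+1} = j \mid X_n = i) \;\geq\; E\bigl[(1-\phi_0)^i\bigr]\cdot P(Z_0 = j),
\]
obtained by forcing all $i$ existing particles to die (which has positive conditional probability on the event $\{\phi_{n+1} < 1\}$, itself of positive probability by (A1)) and then specifying the value of $Z_{n+1}$. The Markov property of $\calx$ is immediate from \eqref{ost}, since the new triple $(\phi_{n+1}, Z_{n+1}, (B_{n+1,k})_k)$ is independent of $(X_0, \ldots, X_n)$.

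Next, Proposition~\ref{convergence} supplies a stationary distribution $\pi$, namely the law of $X_\infty$. The representation $X_\infty = \sum_k X_{0,k}$ together with $X_{0,0} =_D Z_0 \geq k_{\min}$ almost surely forces $\pi(\{k < k_{\min}\}) = 0$, which is the support claim. Specializing the die-off estimate to $j = k_{\min}$ and using the stationarity identity $\pi(k_{\min}) = \sum_i \pi(i) P(i, k_{\min})$ then gives
\[
\pi(k_{\min}) \;\geq\; P(Z_0 = k_{\min})\sum_{i\geq k_{\min}} \pi(i)\, E\bigl[(1-\phi_0)^i\bigr] \;>\;0,
\]
so $k_{\min}$ lies in the support of $\pi$; specializing further to $i = k_{\min}$ yields a positive self-loop $P(k_{\min}, k_{\min}) > 0$ and hence aperiodicity.

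For irreducibility and positive recurrence, the die-off estimate at $j = k_{\min}$ shows that $k_{\min}$ is accessible in one step from \emph{every} state of $\zz_+$. It follows that the set of states accessible from $k_{\min}$ is the unique closed communicating class of $\calx$, and that this class coincides with the support of $\pi$. Irreducibility on this class is then tautological, and the existence of a stationary probability measure $\pi$ on an irreducible class yields positive recurrence by the standard dichotomy; ergodicity is the conjunction of the three properties. The only subtlety I anticipate is terminological rather than mathematical: without additional hypotheses on $(\phi_0, Z_0)$, the set of states accessible from $k_{\min}$ can be a proper subset of $\{k \geq k_{\min}\}$ (a parity constraint appears already when $\phi_0 \in \{0,1\}$ and the support of $Z_0$ is a sublattice of $\zz_+$). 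The correct reading of the statement is therefore that $\calx$ is irreducible on the support of $\pi$, which in turn lies inside $\{k \geq k_{\min}\}$.
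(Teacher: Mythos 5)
Your proposal is correct, and there is nothing in the paper to compare it against line by line: the authors explicitly omit the proof of Corollary~\ref{corol}, presenting it as an easy consequence of Proposition~\ref{convergence}. Your argument is exactly the kind of deduction they have in mind, and it is sound: the Markov property follows from \eqref{ost} and the independence of $(\phi_{n+1},Z_{n+1},B_{n+1})$ from the past; the minorization $P(X_{n+1}=j\mid X_n=i)\geq E[(1-\phi_0)^i]\,P(Z_0=j)$ is valid because $P(\phi_0<1)>0$ under (A1), and it simultaneously gives accessibility of $k_{\min}$ from every state, a positive self-loop at $k_{\min}$ (hence aperiodicity), and transience of every state outside the class of $k_{\min}$, so that the stationary law supplied by Proposition~\ref{convergence} is carried by that class and positive recurrence follows from the standard dichotomy. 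Your closing caveat is also a genuine observation rather than a quibble: under Assumption~\ref{assume4} alone the support of the stationary law need not be all of $\{k\geq k_{\min}\}$ (take $\phi_0\equiv 0$ and $Z_0$ uniform on $\{2,4\}$, so that $X_n=Z_n$ and the support is $\{2,4\}$), so the corollary should indeed be read as asserting irreducibility on the support of the stationary distribution, which is contained in $\{k\in\zz_+:k\geq k_{\min}\}$; exact equality of the support with this set requires an extra nondegeneracy hypothesis such as $P(0<\phi_0<1)>0$ together with $Z_0$ nonconstant (or a direct lattice condition on $Z_0$), which the paper does not impose.
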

It follows from the above proposition that $X_\infty$ is the unique solution to the distributional fixed point equation $X=_D \phi_0 \circ X +Z_0$
which is independent of $(\phi_0,B_0,Z_0),$ where $B_0$ denotes the sequence $(B_{0,k})_{k\in\nn}.$
In fact, the explicit form \eqref{xlimit} of the stationary distribution along with the identity
$(\phi_n,Z_n)_{n\in\zz}=_D(\phi_{-n},Z_{-n})_{n\in\zz},$ implies that the unique stationary solution to \eqref{version} is given by the following infinite series:
\beqn
\label{stationary}
X_n=\sum_{k=-\infty}^n X_{k,n},
\feqn
where the random variables $(X_{k,n})_{k\in\zz}$ are independent, and
\beq
X_{k,n}=_P \phi_{n-1}\circ \phi_{n-2}\circ\cdots \circ\phi_{k+1} \circ Z_k, \qquad k\leq n.
\feq
By means of the branching process interpretation,
\beqn
\label{xkn}
X_{k,n}=\#\{\mbox{progeny alive at time $n$ of all the immigrants who arrived at time $k$}\},
\feqn
with the convention that $X_{n,n}=Z_n$ and $X_{k,n}=0$ for $k>n.$ Thus \eqref{stationary} states that the stationary solution
to \eqref{version} is formally obtained by letting the zero generation to be formed as a union of the following two groups of individuals:
\begin{enumerate}
\item $Z_0$ immigrants arriving at time zero, and
\item descendants, present in the population at time zero, of all ``demo-immigrants" who has entered the system at the negative times $k=-1,-2,\ldots$
\end{enumerate}
The random variables $X_{k,n}$ can be defined rigorously on the natural state space of the branching process, which is a space of family trees describing
the ``genealogy'' of the individuals (see \cite[Chapter~\setcounter{vadik}{6}$\mbox{\Roman{vadik}}$]{br-book}).
To distinguish between the branching process starting at time zero with $X_0=0$ and its stationary version ``starting at time $-\infty$",
we will denote by $P$ the distribution of the latter, while continuing to use $P_0$ for the probability law of the former. We will denote by $E$ the expectation
operator associated with the probability measure $P.$ We will use the notation $X=_P Y$ to indicate that the distributions of
random variables $X$ and $Y$ coincide under the stationary law $P.$ As it has been mentioned earlier, we will consistently
state our results for the underlying process under the law $P_0$ and thus will consider measure $P$ as an auxiliary tool rather than 
a primary object of interest.   
\par
In the case when the additive term in the underlying random linear recursion belongs to the domain of attraction of a stable law
we have the following
\begin{theorem}
\label{main}
Let Assumption~\ref{assume1} hold. Then,
\beq
\lim_{t\to\infty} h(t)\cdot P(X_\infty>t)=\bigl(1-E[\phi_0^\alpha]\bigr)^{-1}\in (0,\infty).
\feq
\end{theorem}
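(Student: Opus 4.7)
The proof is an adaptation of the AR(1) argument of Grey~\cite{grey} to the binomial thinning setting. The main input is the explicit series representation
\[
X_\infty = \sum_{k=0}^\infty X_{0,k}
\]
from Proposition~\ref{convergence}(a), in which the summands are independent and $X_{0,k} =_D \Pi_k \circ Z_0$. Writing $\rho_k := \phi_1 \cdots \phi_k$, the conditional distribution of $\Pi_k \circ Z_0$ given $\Phi$ and $Z_0$ is $\mathrm{Bin}(Z_0, \rho_k)$, so Bernstein's inequality shows that $\Pi_k \circ Z_0$ concentrates around the product $\rho_k Z_0$ at Gaussian scale. This concentration, together with Breiman's lemma (valid because $\rho_k \in [0,1]$ is bounded and $Z_0$ is regularly varying of index $-\alpha$) and the i.i.d.\ structure of $\Phi$, yields the per-summand asymptotic
\[
h(t) \cdot P(X_{0,k} > t) \;\longrightarrow\; E[\rho_k^\alpha] \;=\; E[\phi_0^\alpha]^k \qquad (t \to \infty).
\]

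Next I would truncate: set $S_N := \sum_{k=0}^{N-1} X_{0,k}$ and $R_N := X_\infty - S_N$. Each $X_{0,k}$ is regularly varying, hence subexponential, of the common index $-\alpha$, so the classical one-large-jump principle for finite sums of independent subexponentials gives
\[
h(t) P(S_N > t) \;\longrightarrow\; \sum_{k=0}^{N-1} E[\phi_0^\alpha]^k, \qquad t \to \infty.
\]
Assumption \emph{(A1)} combined with $\phi_0 \in [0,1]$ forces $E[\phi_0^\alpha] < 1$, so the infinite series equals $(1 - E[\phi_0^\alpha])^{-1}$. The monotone lower bound $P(X_\infty > t) \geq P(S_N > t)$ then produces $\liminf_{t} h(t) P(X_\infty > t) \geq (1 - E[\phi_0^\alpha])^{-1}$ after letting $N \to \infty$. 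For the matching upper bound I would use
\[
P(X_\infty > t) \leq P(S_N > (1-\epsilon) t) + P(R_N > \epsilon t),
\]
the regular variation $h((1-\epsilon)t) \sim (1-\epsilon)^\alpha h(t)$, and the decay estimate
\[
\limsup_{N \to \infty} \limsup_{t \to \infty} h(t) P(R_N > \epsilon t) = 0 \qquad (\star),
\]
then let $N \to \infty$ and $\epsilon \to 0$.

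The main technical obstacle is $(\star)$. I would establish it by noting that the branching interpretation around \eqref{xkn} identifies $R_N$, in distribution, with $\Pi_N \circ \tilde X_\infty$ for an independent copy $\tilde X_\infty$ of $X_\infty$: in the stationary realization, the population at time~$0$ descended from immigration cohorts arriving at times $\leq -N$ is precisely the $N$-fold thinning of $X_{-N} =_P X_\infty$. Repeating the Bernstein-and-Breiman reasoning of the first paragraph, together with Potter's uniform bound for $h \in \calr_\alpha$, leads to an estimate of the shape
\[
h(t) P(R_N > \epsilon t) \leq C_\epsilon \, E[\phi_0^\alpha]^N \cdot \sup_{s \geq T} h(s) P(X_\infty > s) + o(1)
\]
for $t$ large enough. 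The remaining a~priori bound $\sup_{s \geq T} h(s) P(X_\infty > s) < \infty$ is itself non-trivial and is the most delicate point; I would prove it by a bootstrap/contraction argument over dyadic intervals in $t$, exploiting the same decomposition $X_\infty = S_N + R_N$ with $N$ chosen so large that $C_\epsilon E[\phi_0^\alpha]^N < 1/2$, which turns the displayed inequality into a self-improving one.
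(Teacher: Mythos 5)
Your lower-bound half is fine: the per-summand asymptotics $h(t)P(X_{0,k}>t)\to \bigl(E[\phi_0^\alpha]\bigr)^k$, the one-big-jump estimate for the finite sum $S_N$, and monotonicity in $N$ together give $\liminf_t h(t)P(X_\infty>t)\geq (1-E[\phi_0^\alpha])^{-1}$, and your identification $R_N=_D\Pi_N\circ\witi X_\infty$ (an $N$-fold thinning of an independent copy of $X_\infty$) is also correct. The genuine gap is exactly where you locate the difficulty: the a priori bound $\sup_{s\geq T}h(s)P(X_\infty>s)<\infty$ needed for $(\star)$. Your ``self-improving'' inequality is circular as stated: from $x\leq a+\frac12 x$ one may conclude $x\leq 2a$ only if one already knows $x<\infty$, which is precisely what is in question. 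The standard rescue---running the contraction on the truncated quantity $G(T)=\sup_{t_0\leq s\leq T}h(s)P(X_\infty>s)$, finite for each $T$---does not close here, because the thinning relation bounds the tail of $R_N$ at level $t$ by tails of $\witi X_\infty$ at levels of order $t/(\phi_1\cdots\phi_N)$, i.e.\ \emph{above} $t$ and, since $\phi_1\cdots\phi_N\to 0$ a.s.\ under \emph{(A1)}, typically far beyond any truncation level $T$; so the inequality for $G(T)$ involves values of the tail outside the range that $G(T)$ controls, and a dyadic induction would have to propagate downward in scale while your recursion points upward. Plugging in the available moment bound $P(X_\infty>x)\leq C_\beta x^{-\beta}$, $\beta<\min(1,\alpha)$, for the out-of-range tail only returns an estimate of order $h(T)T^{-\beta}$, i.e.\ it reproduces the moment bound rather than producing a finite limit. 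The a priori bound can in fact be proved directly (e.g.\ by the geometric-weights decomposition $P(X_\infty>t)\leq\sum_k P\bigl(X_{0,k}>(1-\gamma)\gamma^k t\bigr)$ with $\gamma$ close to $1$, handling far summands by $P(X_{0,k}\geq 1)\leq E[Z_0^\beta]\bigl(E[\phi_0^\beta]\bigr)^k$ and near summands by a Potter bound plus a Chernoff bound for the binomial), but that is a substantive extra argument which your sketch does not supply.

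It is worth seeing how the paper sidesteps this entirely: it follows Grey's monotone-sandwich scheme rather than the series-plus-remainder route. Lemma~\ref{klemma} gives the quenched one-step tail recursion \eqref{recs}, Corollary~\ref{cori} annealizes it for initial conditions whose $h$-weighted tails are uniformly bounded, Lemma~\ref{ordering} (a coupling argument) shows the resulting iterates are stochastically monotone, and Lemmas~\ref{domin} and~\ref{domin1} exhibit explicit initial conditions ($\witi X_0=Z_{-1}$, respectively a conditioned $c_0^{1/\alpha}Z_{-1}$) generating an increasing and a decreasing sequence that sandwich $X_\infty$ in distribution; letting $n\to\infty$ in $E[c_n]$ then yields both the lower and the upper bound with no a priori control on the tail of $X_\infty$ itself. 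If you wish to keep your decomposition approach, you must first establish the a priori tail estimate by an independent argument; otherwise the sandwich argument is the economical fix.
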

A prototype of this result for AR(1) processes has been obtained in \cite{grey,trakai75}.
The proof of Theorem~\ref{main} given in Section~\ref{proof-main} relies on an adaptation to our setup of a technique which has been developed
in \cite{grey}.
\paragraph{Extreme values of $\calx.$} We next show that the running maximum of the sequence $\calx$
exhibits the same asymptotic behavior as that of $\calz=(Z_n)_{n\in\zz_+}.$ Let
\beqn
\label{mn}
M_n=\max\{X_1,\ldots,X_n\},\qquad n\in\nn,
\feqn
and
\beqn
\label{bn}
b_n=\inf\{t>0: h(t)\geq n\},
\feqn
where $h(t)$ is the function introduced in Assumption~\ref{assume1}. 
\par
The proof of the following theorem is given in Section~\ref{proof-extremes} below.
\begin{theorem}
\label{extremes}
Let Assumption~\ref{assume1} hold. Then, under the law $P_0,$
\beq
M_n/b_n \Rightarrow M_\infty,
\feq
where $M_\infty$ is a proper random variable with the following distribution function:
\beq
P_0(M_\infty>x)=e^{-x^{-1/\alpha}},\qquad x>0,
\feq
where $\alpha>0$ is the constant introduced in Assumption~\ref{assume1}.
\end{theorem}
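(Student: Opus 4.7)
The goal is to prove $P_0(M_n/b_n \leq x) \to e^{-x^{-\alpha}}$ for every $x > 0$, which together with the limiting distribution function stated above will identify $M_\infty$. Both directions are obtained by sandwiching $M_n$.

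\textbf{Lower bound on $M_n$.} The recursion \eqref{version} gives $X_n \geq Z_n$ deterministically, hence $M_n \geq \max_{1 \leq k \leq n} Z_k$. From Assumption~\ref{assume1}(A2), the definition of $b_n$ in \eqref{bn}, and the regular variation of $h$, one has $n\cdot P_0(Z_0 > b_n x) \to x^{-\alpha}$ for every $x > 0$. The classical Fr\'echet limit for i.i.d.\ sequences then yields $P_0(\max_{k \leq n} Z_k /b_n \leq x) \to e^{-x^{-\alpha}}$, and in particular $\limsup_n P_0(M_n/b_n \leq x) \leq e^{-x^{-\alpha}}$.

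\textbf{Upper bound on $M_n$.} For the matching bound, the plan is a standard Leadbetter-type blocking argument. Choose $k_n, r_n \to \infty$ with $k_n r_n \sim n$ and $r_n = o(n)$. The process $(X_n)$ inherits a geometric mixing rate from the contraction $E[\phi_0^\alpha] < 1$ guaranteed by (A1): contributions $X_{k,m}$ in the branching decomposition \eqref{xkn} have, conditional on $\Phi$, a mean decaying like $\prod_{j=k+1}^m \phi_j$ in $m-k$, and can be controlled unconditionally using Theorem~\ref{main}. This lets us verify an asymptotic independence condition $D(b_n x)$ across blocks and reduces the problem to the estimation of the single-block exceedance probability
$$P_0(M_n/b_n \leq x) \approx \bigl(P_0(M_{r_n} \leq b_n x)\bigr)^{k_n}.$$
To evaluate $P_0(M_{r_n} > b_n x)$, use $X_m = \sum_{k \leq m} X_{k,m}$ as in \eqref{xkn} and observe that, at the scale $b_n$, a single exceedance in a block is triggered with overwhelming probability by exactly one large innovation $Z_{k^*}$ of order $b_n$; the subsequent values $X_{k^*+j}$ carry a descending trail of surviving progeny comparable to $Z_{k^*} \prod_{i=1}^j \phi_{k^*+i}$ by the conditional law of large numbers for the thinning. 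Counting the expected number of exceedances above $b_n x$ in such a trail produces the reciprocal of the extremal index $\theta = 1 - E[\phi_0^\alpha]$, so that
$$P_0(M_{r_n} > b_n x) \sim r_n \cdot \theta \cdot P_0(X_\infty > b_n x).$$
Combined with Theorem~\ref{main} and $h(b_n) \sim n$, this gives $k_n\cdot P_0(M_{r_n} > b_n x) \to \theta \cdot (1 - E[\phi_0^\alpha])^{-1} x^{-\alpha} = x^{-\alpha}$, matching the $\limsup$.

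\textbf{Main obstacle.} The delicate step is the clustering analysis of exceedances above $b_n x$ inside a single block and the verification of an anti-clustering condition of the $D'$ type. One must rule out contributions from two or more independent large innovations within a block, and quantify sharply the size of the cluster of exceedances triggered by a single large innovation. Both rely on the i.i.d.\ structure of $\Phi$, on Assumption~\ref{assume1}(A1) which forces the strict contraction $E[\phi_0^\alpha] < 1$, and on a uniform version of the tail estimate of Theorem~\ref{main} that holds conditionally given a realization of the environment. Handling the non-i.i.d.\ randomness of $\phi_n$ (as opposed to the classical deterministic-coefficient AR(1) case of \cite{grey}) is the principal point where the present argument departs from its AR(1) prototype.
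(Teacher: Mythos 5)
Your lower bound coincides with the paper's first step: from \eqref{version} one has $X_n\geq Z_n$, hence $M_n$ dominates $K_n:=\max_{1\leq k\leq n}Z_k$, whose normalized maxima have the Fr\'{e}chet limit. Your proposed upper bound, however, is a plan rather than a proof, and the gap sits exactly where you place the ``main obstacle.'' The whole blocking route rests on two unproven assertions: the single-block asymptotics $P_0(M_{r_n}>xb_n)\sim \theta\, r_n P(X_\infty>xb_n)$ with $\theta=1-E[\phi_0^\alpha]$, and a $D'$-type anti-clustering condition. (Heuristically the value of $\theta$ is consistent: since $\phi_i\in[0,1]$, the thinned trail of a large innovation never exceeds the innovation itself, so $\theta$ cancels the factor $\bigl(1-E[\phi_0^\alpha]\bigr)^{-1}$ from Theorem~\ref{main} and reproduces the limit of $K_n$, in line with the paper's remark that the law of $\phi_0$ drops out.) Making these assertions rigorous requires precisely the estimates you do not supply: (i) a quantitative large-deviation bound for the binomial thinning, uniform over the environment, showing that an exceedance $X_k>xb_n$ cannot be produced from $X_{k-1}\leq x(1+\delta)b_n$ together with a small $Z_k$, so that every exceedance at scale $b_n$ is accompanied by a comparably large innovation; (ii) the regular-variation fact that the two independent terms $\phi_k\circ X_{k-1}$ and $Z_k$ do not cooperate; and (iii) a reduction from $P_0$ to a stationary law before any blocking argument applies. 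Moreover, the ``uniform conditional version of Theorem~\ref{main}'' you invoke is not available in the paper and would itself need a proof, and the mixing condition $D(u_n)$ is not an automatic consequence of $E[\phi_0^\alpha]<1$; in this model it would naturally come from the regeneration structure of Proposition~\ref{rect}.

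The paper's proof shows that estimates (i)--(ii) already suffice on their own, with no blocking, no extremal index, and no point-process machinery: after passing to the stationary law, it bounds $\limsup_n P_0(M_n>xb_n)$ by $\lim_n P(K_n>xb_n)$ directly, decomposing the shell $\{xb_n<M_n\leq x(1+\delta)b_n\}$ according to the size of the innovation at an exceedance epoch and killing the two bad cases by a union bound over $k\leq n$: the intermediate-innovation case via Lemma~\ref{klemma} and independence (so $n\,P(\cdot)\to0$), and the small-innovation case via a Chernoff bound for $\sum_{i=1}^{\lfloor x(1+\delta)b_n\rfloor}B_{0,i}$, valid once $(1-\varepsilon)/(1+\delta)>E[\phi_0]$. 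If you flesh out your cluster and anti-clustering analysis you will end up proving essentially these same two bounds, at which point the Leadbetter scaffolding is superfluous; so either carry out the direct comparison, or accept that your current write-up leaves the core of the theorem unestablished. (Minor point: your computation yields $P_0(M_n/b_n\leq x)\to e^{-x^{-\alpha}}$, which is the correct Fr\'{e}chet form for $h\in\calr_\alpha$; the exponent and the direction of the inequality in the theorem's displayed distribution function should be read accordingly.)
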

The distribution of $M_\infty$ belongs to the class of the so called Fr\'{e}chet extreme value distributions
and in fact (see, for instance, \cite[Section~3.3]{extreM}),
\beq
P_0(M_\infty>x)=\lim_{n\to\infty} P\bigl(\max_{1\leq k\leq n} Z_k>xb_n\bigr),\qquad x>0.
\feq
It is quite remarkable that the distribution of $\phi_0$ does not
play any role in the result of Theorem~\ref{extremes}. An intuitive explanation for this phenomenon, which can be derived from the proof, is as
follows. Due to the basic property of regular variation, two independent terms $\phi_n\circ X_{n-1}$ and $Z_n$ are unlikely
to ``help" each other in creating a large value of the sum $X_{n+1}=\phi_n\circ X_{n-1}+Z_n.$ Moreover, the law of large numbers
ensures that the ratio $\phi_n\circ X_{n-1}/X_{n-1}$ is bounded away from one with an overwhelming probability
whenever $\phi_n\circ X_{n-1}$ is large. Therefore, the asymptotic
of the extreme value of the sequence $X_n$ follows that of $Z_n.$
\par
\paragraph{Regenerative structure of $\calx.$} Let
\beqn
\label{nu}
\nu_0=1~~~\mbox{and}~~~\nu_n=\inf\{i>\nu_{n-1}:\phi_i \circ X_{i-1}=0\},
\feqn
with the usual convention that the infimum over an empty set is $\infty.$
We will refer to $\nu_n$ as a {\em regeneration time} and to the time
elapsing from $\nu_{n-1}$ until $\nu_n-1$ as {\em the $n$-th renewal epoch}.
In the language of branching processes, at the regeneration times the extinction occurs and
and the process starts again with the next wave of the immigration.  For $n\in\nn,$ let
\beq
\sigma_n=\nu_n-\nu_{n-1}\qquad \mbox{\rm and}\qquad R_n=\bigl(X_i:\nu_{n-1}\leq i< \nu_n\bigr)
\feq
be, respectively, the length of the $n$-th renewal epoch and the list of the values of $X_i$
recorded during the $n$-th renewal epoch.
\par
The proof of the following proposition is given in the appendix.
\begin{proposition}
\label{rect}
Let Assumption~\ref{assume4} hold. Then,
\item [(a)] $P_0(\nu_n<\infty)=1$ for all $n\in\nn.$ Moreover, the pairs $(\sigma_n,R_n)_{n\in\nn}$ form an i.i.d. sequence.
\item [(b)] There exist positive constants $K_1>0$ and $K_2>0$ such that
\beqn
\label{estimate}
P_0(\sigma_1>t)\leq K_1e^{-K_2t},\qquad \forall~t>0.
\feqn
\end{proposition}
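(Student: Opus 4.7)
Part (a) follows from the strong Markov property. Each $\nu_n$ is a stopping time for the natural filtration $\calf_n$ of the Markov chain $(X_n)$, and the defining equation $\phi_{\nu_n}\circ X_{\nu_n-1}=0$ forces $X_{\nu_n}=Z_{\nu_n}$, which is distributed as $Z_0$ and hence, under $P_0$, as $X_1=Z_1$. Consequently the post-$\nu_n$ chain $(X_{\nu_n+k})_{k\geq 0}$, driven by the fresh i.i.d.\ triples $(\phi_{\nu_n+k},Z_{\nu_n+k},B_{\nu_n+k,\cdot})$ that are independent of $\calf_{\nu_n}$, has the same distribution as the original chain started at time $1$ with $X_1=Z_1$. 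Thus the pairs $(\sigma_n,R_n)$ are i.i.d.\ copies of $(\sigma_1,R_1)$, and $P_0(\nu_n<\infty)=1$ follows by induction once part (b) is established.

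For (b) I plan to use a Foster--Lyapunov drift together with a positive one-step regeneration probability from a bounded set. Choose $\beta\in(0,1]$ with $\mu_Z:=E[Z_0^\beta]<\infty$ (possible by (A4), possibly after lowering $\beta$), and set $V(x):=x^\beta$, $\mu_\beta:=E[\phi_0^\beta]$. Since $\phi_{n+1}\circ X_n$ is $\mathrm{Binom}(X_n,\phi_{n+1})$ given $(X_n,\phi_{n+1})$, conditional Jensen gives $E[(\phi_{n+1}\circ X_n)^\beta\mid X_n,\phi_{n+1}]\leq(\phi_{n+1}X_n)^\beta$, and combining with the subadditivity $(a+b)^\beta\leq a^\beta+b^\beta$ yields the drift
\beq
E\bigl[V(X_{n+1})\mid\calf_n\bigr]\leq \mu_\beta V(X_n)+\mu_Z,
\feq
with $\mu_\beta<1$ by (A1) (since $\beta>0$ and $\phi_0<1$ with positive probability). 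Fix $\alpha\in(\mu_\beta,1)$ and $R>0$ so that $\mu_\beta x^\beta+\mu_Z\leq \alpha x^\beta$ for all $x>R$, and let $C:=\{k\in\zz_+:k\leq R\}$. The nonnegative supermartingale $V(X_n)\alpha^{-n}$ stopped at the hitting time $\tau_C$ of $C$ gives $P_x(\tau_C>n)\leq V(x)\alpha^n$; combining this with one step of the strong Markov property and the bound $\sup_{x\in C}E_x[V(X_1)]<\infty$ yields $E_0[\xi^{\zeta_k}]\leq C'M_\xi^k$ for some $\xi>1$, $M_\xi<\infty$, $C'<\infty$, where $\zeta_k$ denotes the $k$-th visit of $X$ to $C$ at times $\geq 1$.

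By (A1), $p_0:=E[(1-\phi_0)^R]>0$, and whenever $X_{\zeta_k}\in C$ the regeneration probability at the next step, $E[(1-\phi_{\zeta_k+1})^{X_{\zeta_k}}]$, is at least $p_0$. Since each $\phi_n$ is independent of $\calf_{n-1}$, the corresponding Bernoulli trials at the successive times $\zeta_k+1$ are conditionally independent, so the probability that the first $N$ of them all fail is at most $(1-p_0)^N$. Consequently,
\beq
P_0(\sigma_1>t)\leq P_0(\zeta_N>t)+(1-p_0)^N\leq C'M_\xi^N\xi^{-t}+(1-p_0)^N,
\feq
and choosing $N=\lfloor ct\rfloor$ with $c>0$ small enough (so that $c\log M_\xi<\log\xi$) makes both terms decay geometrically in $t$, establishing (b). The step requiring most care is this combination: one must set up the Lyapunov framework so that the return-time tails to $C$ and the regeneration trials at the successive visits can be merged through the conditional independence coming from the freshness of each $\phi_n$, and one must verify the uniform-in-starting-state exponential tail on excursions from $C$.
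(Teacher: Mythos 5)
Your argument is correct, but for part (b) it follows a genuinely different route from the paper. The paper proves the exponential bound via the branching/renewal structure (an argument of Kozlov adapted from Key): writing $v(t)=P_0(\sigma_1>t)$ and decomposing according to the last immigration generation whose progeny is still alive, one gets the renewal identity $v(t)=\sum_{k=0}^{t-1}v(k)h(t-k)$, where $h(t)\le E[X_{0,t}]\le E[Z_0^\beta]\bigl(E[\phi_0^\beta]\bigr)^t$ decays geometrically and $\sum_t h(t)=1-\lim_t P_0(X_t=0)<1$; Feller's generating-function identity $V(z)=(1-H(z))^{-1}$ then shows $V$ has radius of convergence greater than one, which is \eqref{estimate}. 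You instead run generic Markov-chain machinery: the same $\beta$-moment contraction $E[\phi_0^\beta]<1$ is used as a geometric Foster--Lyapunov drift for $V(x)=x^\beta$ toward the bounded set $C=\{x\le R\}$, giving exponential moments for the successive visit times $\zeta_k$, and at each visit the thinning dies out at the next step with probability at least $p_0=E[(1-\phi_0)^R]>0$, so $P_0(\sigma_1>t)\le C'M_\xi^N\xi^{-t}+(1-p_0)^N$ with $N\asymp ct$. Both proofs rest on the same two ingredients (the $\beta$-moment contraction from (A1),(A4) and a uniformly positive extinction probability from bounded states), but the paper's version exploits the explicit branching decomposition $X_t=\sum_k X_{k,t}$ and is shorter once renewal-sequence theory is invoked, while yours is more robust boilerplate (it would apply to any chain with such a drift and one-step regeneration from a bounded set) at the cost of carrying out the standard but lengthier return-time estimates you rightly flag. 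Your part (a) is essentially the paper's standard argument; the one point to state precisely is that the triple at time $\nu_n$ is not entirely fresh---$\phi_{\nu_n}$ and $B_{\nu_n,\cdot}$ are involved in the regeneration event---but only $Z_{\nu_n}$ enters the post-$\nu_n$ evolution, and $\{\nu_n=m\}$ does not depend on $Z_m$; this is exactly the paper's remark that the two-component chain $(X_n,Z_n)$ visits the diagonal at the $\nu_n$ and its transition kernel depends only on the first coordinate. Also note there is no circularity in deferring $P_0(\nu_n<\infty)=1$ to part (b), since your proof of (b) bounds $P_0(\sigma_1>t)$ without assuming $\nu_1<\infty$.
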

While the first part of the proposition is a standard Markov chain exercise,
the exponential bound in \eqref{estimate} is a delicate result. A similar bound has been proved
for a general type of branching processes with immigration in \cite{mbpire}. An argument which is due to M.~Kozlov
and which has been adapted for the proof of Theorem~4.2 in \cite{mbpire} goes through
almost verbatim for our setting. We provide a suitable variation of this argument in the appendix.
\par
The existence of the ``life-cycles" (i.e., renewal epochs) for the branching process implies, for instance,
the following. Recall $X_{k,n}$ from \eqref{xkn}. Let
\beq
\lambda_n=n-\max\{k<n: X_{k,n}> 0\}
\qquad
\mbox{and}
\qquad
\eta_n=\frac{\sum_{k=1}^n X_{k,n}\cdot(n-k)}{\sum_{k=1}^n X_{k,n}}
\feq
be, respectively, the maximal and the average age of the individuals present at generation $n$ (see the above footnote remark on p.~\pageref{note}).
\begin{proposition}
\label{aage}
Let Assumption~\ref{assume4} hold. Then both $\lambda_n$ and $\eta_n$ converge weakly, as $n\to\infty,$
to proper distributions. More precisely, under the law $P_0,$
\beq
\lambda_n\Rightarrow \sigma_1 \cdot U
\qquad \mbox{\rm and} \qquad
\eta_n\Rightarrow -\frac{\sum_{k=-\infty}^0 X_{k,0}\cdot k}{\sum_{k=-\infty}^0 X_{k,0}},
\feq
where $U$ is a random variable which is independent of $\sigma_1$ and is distributed uniformly over the interval $[0,1].$
\end{proposition}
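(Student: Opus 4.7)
The proof is built on the regenerative decomposition supplied by Proposition~\ref{rect}. For every $n\in\nn$ let $N(n) := \max\{k \geq 0 : \nu_k \leq n\}$, so that $n$ lies in the $(N(n)+1)$-th renewal epoch at the internal position $j := n - \nu_{N(n)}$. By part~(a) of Proposition~\ref{rect}, this epoch is distributed as $R_1$, and by part~(b) we have $E_0[\sigma_1^p] < \infty$ for every $p>0$. The key structural observation is that, since $\phi_{\nu_{N(n)}} \circ X_{\nu_{N(n)}-1} = 0$, every individual alive at generation $n$ descends from an immigrant that arrived at some time in $[\nu_{N(n)}, n]$. Consequently both $\lambda_n$ and the array $(X_{k,n})_{k \leq n}$ that enters the definition of $\eta_n$ are measurable functionals of the single block $R_{N(n)+1}$ together with the internal offset $j$.

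For $\lambda_n$, I would apply discrete renewal theory to the i.i.d.\ sequence $(\sigma_m)_{m\geq 1}$: conditional on the current-block length $\sigma_{N(n)+1}=l$, the offset $j$ is asymptotically uniform on $\{0,1,\dots,l-1\}$, while the marginal law of $\sigma_{N(n)+1}$ converges to the size-biased version of $\sigma_1$. Writing $E_0[f(\lambda_n)]$ as a double sum over (block structure, offset) and passing to the limit produces a representation whose algebraic simplification against the size-biasing factor reproduces the law of $\sigma_1\cdot U$, with $U$ uniform on $[0,1]$ independent of $\sigma_1$.

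For $\eta_n$, I would exploit instead the stationary representation~\eqref{stationary}: by the shift-invariance $(\phi_n,Z_n)_{n\in\zz} =_D (\phi_{-n},Z_{-n})_{n\in\zz}$ and a coupling of the process under $P_0$ with its stationary version under $P$, one obtains the joint weak convergence of $(X_{n-j,n})_{j\geq 0}$ under $P_0$ to $(X_{-j,0})_{j\geq 0}$ under $P$. The denominator $\sum_{j} X_{n-j,n}$ converges to the finite limit $X_\infty$ supplied by Proposition~\ref{convergence}, uniform integrability coming from the exponential tail of part~(b) of Proposition~\ref{rect} (which prevents mass from escaping to infinity in either the numerator or the denominator). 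The continuous mapping theorem then delivers the stated limit of $\eta_n$ after substituting $k = n-j$.

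The hard point is the explicit identification of the limit for $\lambda_n$: one must marry the renewal-theoretic weak limit of the pair (current-block length, offset) with the precise within-epoch structure of the age functional, and show that the size-biasing factor is cancelled exactly so that the unbiased $\sigma_1\cdot U$ appears on the right, rather than the more standard backward-recurrence-time distribution involving the size-biased block length.
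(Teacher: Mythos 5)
Your treatment of $\eta_n$ is essentially the argument the paper itself has in mind: use the stationary representation \eqref{stationary}, the fact that $X_{k,0}=0$ for $k<\nu_{-1}$ together with $P(\nu_{-1}<\infty)=1$ (so the numerator and denominator are a.s.\ finite sums), and convergence to stationarity after the first regeneration; that half of your proposal is sound, and the appeal to uniform integrability is not even needed for weak convergence via the continuous mapping theorem.

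The $\lambda_n$ half, however, has a genuine gap, located exactly at the point you yourself call ``the hard point.'' Your renewal-theoretic setup is correct: the pair (length of the cycle containing $n$, offset of $n$ within that cycle) converges to (size-biased $\sigma_1$, uniform position). But the asserted ``exact cancelation of the size-biasing factor'' is never exhibited, and no mechanism for it exists in the computation you describe: composing a size-biased cycle length with an independent uniform position is precisely what produces the backward-recurrence-time law $P_0(\sigma_1>j)/E_0[\sigma_1]$, and there is no further factor in your double sum over (block structure, offset) that could remove the bias and return the unbiased $\sigma_1\cdot U$. A concrete check shows the cancelation cannot occur: with the natural convention for an empty maximum, $\{\lambda_n\ge 1\}=\{\phi_n\circ X_{n-1}>0\}$, so the renewal theorem gives $P_0(\lambda_n\ge 1)\to 1-1/E_0[\sigma_1]$, whereas $P(\sigma_1U\ge 1)=1-E_0[1/\sigma_1]$, and by Jensen's inequality these differ whenever $\sigma_1$ is non-degenerate. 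Note also that $\lambda_n$ is integer-valued and unnormalized, so its weak limit cannot literally be the continuous variable $\sigma_1 U$, and that $\lambda_n$ is not the renewal age: the oldest cohort alive at time $n$ need not be the one that arrived at the last regeneration time, since that cohort may die out while later cohorts keep the cycle alive, so the limit is a genuinely within-block functional (size-biased cycle, uniform offset, within-cycle age structure) rather than a function of $(\sigma_1,U)$ alone. The paper's own justification is a one-line appeal to ``the renewal theorem,'' i.e., the same route you take, but the details it leaves to the reader are exactly the identification step that your outline cannot deliver as stated; to complete a proof here you must either derive the limit as the stationary age-type functional of the regenerative process and reconcile it with the displayed formula, or conclude that the displayed formula requires the size-biased cycle length --- the classical inspection-paradox correction.
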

The first result in the above proposition is a direct implication of the renewal theorem
whereas the second one is a consequence of the explicit formula for $\eta_n$ given above and the fact that
$X_{k,0}=0$ for $k<\nu_{-1}$ and $P(\nu_{-1}<\infty)=1.$ Here $\nu_{-1}$ is time of the last renewal up to
time zero for the process starting at $-\infty.$ We leave details to the reader.
\par
Another interesting implication of the existence of the regenerative structure is the convergence
in distribution of the coalescence time at generation $n.$ Suppose that $X_n>2$ and sample at random
two individuals present at generation $n.$ Then the coalescence time $T_n$ is defined as
$n-k$ if the immigrant ancestors of both individuals have entered the system at the same time $k\in\zz_+,$
and is set to be infinity otherwise (cf., for instance, \cite{coalescence}). Since the probability of sampling
of both individuals among the descendants of the immigration wave $k$ is $\frac{X_{k,n}(X_{k,n}-1)}{X_n(X_n-1)},$
\beq
P_0(T_n\leq t)=E\left[\frac{\sum_{k=n-t}^n X_{k,n}(X_{k,n}-1)}{\sum_{k=1}^n X_{k,n}\Bigl(\sum_{k=1}^n X_{k,n}-1\Bigr)} \right].
\feq
We have thus obtained the following:
\begin{proposition}
\label{coalescence}
Let Assumption~\ref{assume4} hold. Then $T_n$ converges weakly  under $P_0,$ as $n\to \infty,$
to a proper random variable with the following distribution function on $\nn \bigcup \{0,+\infty\}:$
\beq
F(t)=E\left[\frac{\sum_{k=-t}^0 X_{k,0}(X_{k,0}-1)}{\sum_{k=-\infty}^0 X_{k,0}\Bigl(\sum_{k=-\infty}^0 X_{k,0}-1\Bigr)} \right], \qquad t<\infty,
\feq
where $\frac{0}{0}$ inside the expectation is interpreted as $0.$
\end{proposition}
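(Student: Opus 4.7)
My approach is to pass to the limit $n\to\infty$ directly inside the expression for $P_0(T_n\leq t)$ displayed in the text just above the statement. The two ingredients I will need are (i) a time-shift identity recasting that expression in terms of the stationary array $(X_{k,0})_{k\leq 0}$ appearing in the proposition, and (ii) a dominated convergence argument justified by the a.s.\ finiteness of the backward series $\sum_{k=-\infty}^0 X_{k,0}$, which follows from the stationary version of Proposition~\ref{convergence} together with the regenerative structure provided by Proposition~\ref{rect}.

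For the shift, I first note that by the construction \eqref{xkn} the random variable $X_{k,n}$ with $1\leq k\leq n$ is a deterministic functional of $(Z_k,\phi_{k+1},\ldots,\phi_n,B_{k+1,\cdot},\ldots,B_{n,\cdot})$ alone, and hence has the same joint distribution under $P_0$ and under $P.$ Stationarity of the i.i.d.\ environment $(\phi_k,Z_k,B_{k,\cdot})_{k\in\zz}$ then yields $(X_{k,n})_{k=1}^n=_P(X_{k-n,0})_{k=1}^n,$ so the substitution $j=n-k$ in the given formula gives, for every $n>t,$
\beq
P_0(T_n\leq t)=E\left[\frac{\sum_{k=-t}^0 X_{k,0}(X_{k,0}-1)}{S_n(S_n-1)}\right],\qquad S_n:=\sum_{k=-n+1}^0 X_{k,0}.
\feq
The numerator no longer depends on $n$ and the task reduces to controlling the denominator $S_n(S_n-1).$

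Next, I would observe that $S_n$ is non-decreasing in $n$ and, by \eqref{stationary} and Proposition~\ref{rect}, $S_n\uparrow S_\infty:=\sum_{k=-\infty}^0 X_{k,0}<\infty$ almost surely; in fact $S_n$ is eventually constant, since the number of immigration waves contributing to $X_0$ is a.s.\ finite by the exponential tail estimate \eqref{estimate}. The integrand therefore converges a.s.\ to the quantity inside the expectation defining $F(t),$ with the convention $0/0=0$ consistent on $\{S_\infty\leq 1\}$ because the numerator vanishes there as well. Since the elementary inequality $(\sum_k a_k)(\sum_k a_k-1)\geq\sum_k a_k(a_k-1)$ for non-negative integers bounds the integrand by $1$ for every $n>t,$ dominated convergence delivers $P_0(T_n\leq t)\to F(t).$ A final monotone convergence as $t\to\infty$ inside the expectation identifies $1-\lim_{t\to\infty}F(t)$ with $P_0(T_\infty=+\infty),$ the probability that the two sampled ancestral lineages never coalesce, so $F$ defines a proper distribution on $\nn\bigcup\{0,+\infty\}.$

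The only step with any real subtlety is the shift identity in the second paragraph: one must argue simultaneously that $(X_{k,n})_{1\leq k\leq n}$ has the same joint law under $P_0$ and $P,$ and that this law is invariant under the time shift $k\mapsto k-n$ on the stationary side. Both facts are direct consequences of the explicit construction \eqref{xkn} and the i.i.d.\ nature of the driving environment, but it is this move that allows the backward, ``genealogical'' expression for $F(t)$ to emerge in the limit; everything else is a routine application of (dominated and monotone) convergence.
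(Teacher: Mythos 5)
Your argument is correct and is essentially the argument the paper intends: the paper derives the displayed formula for $P_0(T_n\leq t)$ and treats the proposition as an immediate consequence of it together with the regenerative structure, leaving to the reader exactly the steps you supply (the shift to the stationary frame using the i.i.d.\ environment, and bounded convergence justified by the a.s.\ finiteness of $\sum_{k=-\infty}^0 X_{k,0}$). Your domination of the integrand by $1$ via $\bigl(\sum_k a_k\bigr)\bigl(\sum_k a_k-1\bigr)\geq\sum_k a_k(a_k-1)$ and the identification of the defect $1-\lim_{t\to\infty}F(t)$ as the mass at $+\infty$ are exactly the routine details the paper omits.
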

\paragraph{Growth rate and fluctuations of the partial sums of $\calx.$}
Let $S_n=\sum_{k=1}^n X_k.$ The following law of large numbers is a direct consequence of Corollary~\ref{corol}.
\begin{proposition}
\label{lln}
Let Assumption~\ref{assume4} hold with $\beta=1.$  Then
\beq
\lim_{n \to \infty} \fracd{S_n}{n} =E[X_0]=\frac{E[Z_0]}{1-E[\phi_0]},\qquad P_0-\as
\feq
\end{proposition}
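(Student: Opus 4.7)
The plan is to invoke the strong law of large numbers for positive-recurrent Markov chains. Corollary~\ref{corol} says that, under $P_0,$ $\calx$ is an irreducible, aperiodic, positive-recurrent Markov chain whose stationary distribution is the law of $X_\infty$ (see Proposition~\ref{convergence}). The standard ergodic theorem for such chains then yields, for every function $f$ integrable against the stationary distribution,
\beq
\frac{1}{n}\sum_{k=1}^n f(X_k) \longrightarrow E[f(X_\infty)], \qquad P_0-\as,
\feq
regardless of the starting point. Specialising to $f(x)=x$ produces $S_n/n\to E[X_\infty]$ almost surely, provided the finiteness $E[X_\infty]<\infty$ is established.

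To identify the limit, I would use the series representation $X_\infty=\sum_{k=0}^\infty X_{0,k}$ furnished by Proposition~\ref{convergence}(a), in which $X_{0,k}=_D \Pi_k\circ Z_0=\phi_k\circ\cdots\circ\phi_1\circ Z_0.$ Conditioning on $(\Phi,Z_0)$ and applying the identity $E[\phi\circ N\mid\phi,N]=\phi N$ for the binomial thinning operator, together with the independence of $(\phi_n)_{n\in\nn}$ from $Z_0,$ gives $E[X_{0,k}]=E[\phi_0]^k\,E[Z_0].$ Monotone convergence then produces
\beq
E[X_\infty]=\sum_{k=0}^\infty E[\phi_0]^k\,E[Z_0]=\frac{E[Z_0]}{1-E[\phi_0]}.
\feq
Finiteness of this geometric series comes from Assumption~\ref{assume4} with $\beta=1$ (which yields $E[Z_0]<\infty$) together with the bound $E[\phi_0]<1.$

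The only subtle point is the strict inequality $E[\phi_0]<1.$ This follows directly from (A1): since $\phi_0\in[0,1]$ and $P(\phi_0<1)>0,$ one may pick $\veps>0$ with $P(\phi_0\leq 1-\veps)>0$ and estimate $E[\phi_0]\leq 1-\veps\,P(\phi_0\leq 1-\veps)<1.$ There is no substantial obstacle beyond this elementary observation; the proposition is essentially a direct combination of the Markov-chain ergodic theorem provided by Corollary~\ref{corol} with the short first-moment calculation above.
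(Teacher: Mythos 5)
Your argument is correct and follows the route the paper intends: the proposition is stated there as a direct consequence of Corollary~\ref{corol} (ergodicity/positive recurrence of the chain), exactly the Markov-chain ergodic theorem you invoke. Your supplementary computation $E[X_{0,k}]=\bigl(E[\phi_0]\bigr)^k E[Z_0]$ via conditioning on $(\Phi,Z_0)$, the geometric series summation, and the elementary observation $E[\phi_0]<1$ from (A1) correctly fill in the identification $E[X_0]=E[Z_0]/(1-E[\phi_0])$ that the paper leaves implicit.
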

The next theorem is concerned with the rate of the growth of
the partial sums when $Z_0$ has infinite mean.
For $\alpha \in (0,2]$ and $b >0$ denote by $\call_{\alpha,b}$
the strictly asymmetric stable law of index $\alpha$ with the characteristic function
\beqn \label{kappa-law} \log \widehat
\call_{\alpha,b}(t)=-b|t|^\alpha\left(1+i\fracd{t}{|t|}
f_\alpha(t)\right), \feqn where $f_\alpha(t)=-\tan
\fracd{\pi}{2}\alpha$ if $\alpha \neq 1,$ $f_1(t)=2/\pi \log t.$
With a slight abuse of notation we use the same symbol for the
distribution function of this law. If $\alpha<1,$
$\call_{\alpha,b}$ is supported on the positive reals, and if
$\alpha \in (1,2],$ it has zero mean \cite[Section~2.2]{extreM}.
\par
Recall $b_n$ from \eqref{bn}. The following result is proved in Section~\ref{partial-proof} below
by using an approximation of the partial sums of the process by those of a
stationary strongly mixing sequence for which we are able to verify the conditions
of a general stable limit theorem.
\begin{theorem}
\label{partial} Let Assumption \ref{assume1} hold with $\alpha\in (0,1).$
Then $b_n^{-1}S_n \Rightarrow \call_{\alpha,b}.$
\end{theorem}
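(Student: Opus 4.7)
The plan is to approximate $S_n$ (under $P_0$) by the partial sum of the stationary version of the process, and then verify that the stationary sequence satisfies the hypotheses of a general stable limit theorem for strongly mixing sequences.

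First I would couple $(X_n)_{n\ge 0}$ and the two-sided stationary version $(\witi X_n)_{n\in\zz}$ from \eqref{stationary} on a common probability space using the same innovations $(\phi_n,Z_n,B_{n,k})$. In the branching interpretation we may write $\witi X_n=X_n+E_n,$ where $E_n$ counts the descendants at time $n$ of the immigrants arriving at times $\le 0$. The sequence $E_n$ evolves as a branching process in the environment $(\phi_n)_{n\ge 1}$ with no immigration, and assumption (A1) forces $E[\log\phi_0]<0$, so $E_n$ extinguishes after a finite random time $\tau$ almost surely. Consequently $|\witi S_n-S_n|\le \sum_{k=1}^{\tau} E_k<\infty$ a.s., so $b_n^{-1}(\witi S_n-S_n)\to 0$ a.s. and it is enough to prove $b_n^{-1}\witi S_n\Rightarrow \call_{\alpha,b}$ under the stationary law $P$.

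Next I would use Proposition~\ref{rect} together with Corollary~\ref{corol} to show that $(\witi X_n)_{n\in\zz}$ is a stationary, strongly mixing sequence with an exponentially decaying mixing coefficient. The chain is irreducible and positive-recurrent, and its regeneration times have exponential tails by \eqref{estimate}; these two ingredients deliver geometric ergodicity of the chain, hence exponential $\alpha$-mixing of the stationary version. Combined with Theorem~\ref{main}, which gives $P(\witi X_0>t)\sim (1-E[\phi_0^\alpha])^{-1}h(t)^{-1}$, the stationary marginal lies in the domain of attraction of a positive $\alpha$-stable law. With these two properties in hand I would invoke a general stable limit theorem for stationary strongly mixing sequences with regularly varying marginals (a result of Davis--Hsing type), which for $\alpha\in(0,1)$ requires no centering and yields $b_n^{-1}\witi S_n\Rightarrow \call_{\alpha,b}$ once (a) weak convergence of the normalized point process $\sum_{k=1}^n\delta_{\witi X_k/b_n}$ on $(0,\infty]$ to a compound Poisson limit (encoding the cluster distribution of the extremes), and (b) an anti-clustering estimate of the form $\lim_{\ell\to\infty}\limsup_n\sum_{j=\ell}^{r_n}P(\witi X_0>\veps b_n,\witi X_j>\veps b_n)=0$ for a suitable $r_n\to\infty$ have been checked.

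The principal obstacle is identifying the cluster distribution in~(a). Unlike in the i.i.d.\ case, a large value of $\witi X_k$ is followed by a structured run $\phi_{k+1}\circ \witi X_k,\ \phi_{k+2}\circ\phi_{k+1}\circ \witi X_k,\dots$ through the thinning operators $\Pi_j$ from \eqref{pik}, so the extremal index of $(\witi X_n)$ is strictly below one and the cluster contribution to the scale $b$ involves the products $\prod_{i=1}^{j}\phi_i$ that already underlie the constant $(1-E[\phi_0^\alpha])^{-1}$ in Theorem~\ref{main}. Quantifying this cluster precisely, and verifying (b) uniformly via the exponential mixing furnished by the regeneration structure, constitute the technical heart of the argument.
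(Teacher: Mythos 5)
Your reduction to the stationary sequence is fine (the paper does essentially the same thing, absorbing the contribution of the pre-zero immigrants into an a.s.\ finite remainder), and strong mixing via the regeneration times of Proposition~\ref{rect} is available. But from that point on your argument is a plan rather than a proof: you propose to apply a Davis--Hsing type theorem directly to the sequence $(\witi X_n)$, and you yourself defer exactly the two hypotheses that carry all the difficulty, namely the point-process convergence with an identified cluster (spectral tail) distribution and the anti-clustering estimate. For $(\witi X_n)$ these are genuinely nontrivial: a single large value $\witi X_k\approx xb_n$ is followed by $\witi X_{k+j}\approx x\prod_{i=1}^j\phi_{k+i}\,b_n$, so $nP(\witi X_0>\veps b_n,\witi X_j>\veps b_n)$ does \emph{not} vanish for fixed $j$, the extremal index is strictly less than one, and without computing the cluster functional you cannot even identify the scale constant $b$ in $\call_{\alpha,b}$. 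Since the joint exceedance probabilities of $\witi X_0$ and $\witi X_j$ involve the same surviving individuals, there is no conditional independence to exploit, and controlling the sum over $\ell\le j\le r_n$ requires quantitative iterates of Lemma~\ref{klemma} that you have not supplied. As written, the proposal has a genuine gap at its ``technical heart.''

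The paper's proof is organized precisely to avoid this clustering problem. It replaces $X_k$ by $Y_n=\sum_{t\ge n}X_{n,t}$, the \emph{total} progeny of the wave-$n$ immigrants, so that each extremal cluster is collapsed into a single summand: $S_n=\sum_{t=0}^nY_t$ minus an a.s.\ bounded correction. Conditionally on $\Phi$, the variables $(Y_n)$ are independent, so the local-dependence condition of the Tyran-Kami\'{n}ska theorem (Theorem~\ref{stabs}) is checked by a two-line Cauchy--Schwarz argument, and the marginal tail is computed exactly in Lemmas~\ref{parta3}--\ref{parta1}, giving $\lim_t h(t)P(Y_1>t)=E\bigl[\bigl(1+\sum_{i\ge1}\prod_{j=1}^i\phi_j\bigr)^\alpha\bigr]<\infty$ (finiteness using $\alpha<1$), which also pins down $b$. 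If you want to salvage your route, you would have to carry out the cluster computation for the thinned chain --- in effect re-deriving the same quantity $\bigl(1+\sum_{i\ge1}\prod_{j=1}^i\phi_j\bigr)^\alpha$ as the cluster contribution --- together with a uniform anti-clustering bound; the $Y_n$-decomposition is the shortcut that makes both steps elementary.
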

We next study the fluctuations of the partial sums in the case where non-trivial centering of $X_n$ is required
to obtain a proper weak limit for the partial sums.
\begin{theorem}
\label{partial4} Let Assumption \ref{assume1} hold with $\alpha\in [1,2].$
For $n\in\nn,$ define
\beq
a_n=
\left\{
\begin{array}{lr}
b_n, ~\mbox{\rm where $b_n$ is defined in \eqref{bn}},&~\mbox{\rm if}~\alpha<2,\\
\inf\,\{t>0: nt^{-2}\cdot E\bigl[X_0^2;\,X_0\leq t\bigr]\leq 1\}~&\mbox{\rm if}~\alpha=2.
\end{array}
\right.
\feq
Denote $\mu:=E[X_0].$ Then the following holds for some $b>0:$
\item[(i)] If $\alpha=1,$ then $a_n^{-1}(S_n-c_n) \Rightarrow \call_{1,b}$ with $c_n=nE[X_0;X_0\leq a_n].$
\item[(ii)] If $\alpha \in (1,2),$ then $a_n^{-1}(S_n-n \mu) \Rightarrow \call_{\alpha,b}.$
\item[(iii)] If $\alpha=2$ and $E[Z_0^2]=\infty,$ then $a_n^{-1}(S_n-n\mu)\Rightarrow \call_{2,b}.$
\end{theorem}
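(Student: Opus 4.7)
The plan is to extend the strategy used for Theorem~\ref{partial}: approximate $S_n$ by the partial sums of the stationary version, which I shall denote $X_n^*$, of the process (given by the series \eqref{stationary} under the measure $P$), and then apply a classical stable limit theorem. The passage from $P_0$ to $P$ is absorbed by a coupling: the two processes coincide after the first regeneration time, which has exponential tails by Proposition~\ref{rect}(b), so $S_n-S_n^*$ is $o(a_n)$ in probability for every $\alpha\in[1,2]$. The stationary sequence is $\beta$-mixing with a geometric rate, again by Proposition~\ref{rect}(b), and its marginal tail $P(X_0^*>t)\sim (1-E[\phi_0^\alpha])^{-1}/h(t)$ is supplied by Theorem~\ref{main}. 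Equivalently, one can work with the i.i.d.\ block decomposition $S_n^*=\sum_{k=1}^{N(n)}T_k+(\text{remainder})$, where the $T_k=\sum_{i=\nu_{k-1}}^{\nu_k-1}X_i^*$ are i.i.d.\ block sums afforded by Proposition~\ref{rect}(a), $N(n)/n\to 1/E[\sigma_1]$ by the renewal theorem, and the remainder is $o(a_n)$ by \eqref{estimate}. The three sub-cases of Theorem~\ref{partial4} then correspond to the three standard centering/scaling prescriptions associated to the marginal tail index~$\alpha$.

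Concretely: in~(ii) with $\alpha\in(1,2)$, $E[X_0^*]=\mu$ is finite by Theorem~\ref{main}, no truncation is needed, and centering by $n\mu$ together with $a_n=b_n$ produces a stable limit of index~$\alpha$. In~(iii) with $\alpha=2$ and $E[Z_0^2]=\infty$, Theorem~\ref{main} propagates $E[(X_0^*)^2]=\infty$, the scaling is determined by the truncated variance, and the limit is Gaussian $\call_{2,b}$. In~(i) with $\alpha=1$, the truncated-mean centering $c_n=nE[X_0^*;X_0^*\le a_n]$ is the analogue of the classical boundary centering in the i.i.d.\ stable CLT and accommodates the logarithmic term $f_1(t)$ in~\eqref{kappa-law}. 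In each sub-case the constant $b$ in $\call_{\alpha,b}$ is an explicit (but deliberately unstated) function of the tail constant $(1-E[\phi_0^\alpha])^{-1}$ from Theorem~\ref{main} and of the mean inter-renewal length $E[\sigma_1]$.

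The main technical obstacle is the tail estimate for the i.i.d.\ block sum $T_1$ (equivalently, an anti-clustering / condition-$D'$-type property of the stationary sequence): the $X_i^*$ inside a single block are strongly dependent, and $\sigma_1$ is itself a functional of the same $\phi$'s and $Z$'s, so standard formulas for randomly indexed sums do not apply directly. The natural remedy is a ``principle of a single large jump'' tailored to the branching structure: a large value of $T_1$ is overwhelmingly likely to originate from a single large immigration $Z_{\nu_{k-1}+j}$ whose progeny dominates the block during the essentially bounded time $\sigma_1$. Sandwiching $T_1$ between $\max_{i\in R_1}Z_i$ and $\sigma_1\cdot\sum_{i\in R_1}Z_i$, and combining the regular variation of $Z_0$ with the exponential moments of $\sigma_1$ from \eqref{estimate}, should yield $P(T_1>t)\sim c\cdot P(Z_0>t)$ for an explicit $c>0$; together with the matching $E[T_1;T_1\le t]\sim E[\sigma_1]\cdot E[X_0^*;X_0^*\le t]$ this delivers the centerings at the $X_0^*$-level rather than the block level. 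A secondary subtlety, specific to~(i), is that slowly varying discrepancies between $a_n$ and $a_{N(n)}$ and between the block-level and marginal-level truncations must be tracked carefully to identify $c_n$ in its stated form.
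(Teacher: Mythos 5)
Your overall route is the paper's route: reduce Theorem~\ref{partial4} to the regenerative structure of Proposition~\ref{rect}, write $S_n$ (up to boundary terms that are negligible after the $a_n$-normalization) as a sum of i.i.d.\ block sums $W_k$, and invoke the classical stable limit theory for i.i.d.\ nonnegative summands with the standard Kesten--Kozlov--Spitzer bookkeeping for the random number of completed blocks and for the centering; the mixing alternative you mention is what the paper uses only for $\alpha\in(0,1)$ in Theorem~\ref{partial}. So the skeleton matches, and the items you call routine (coupling to stationarity, $N(n)/n\to 1/E[\sigma_1]$, the truncated-mean bookkeeping at $\alpha=1$) are exactly the "standard argument" the paper cites and omits.

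The genuine gap is in the one step that carries all the weight, namely the tail of the block sum. What the argument needs is the existence of the limit $\lim_{t\to\infty}h(t)\cdot P_0(W_1>t)\in(0,\infty)$ (the paper's Lemma~\ref{tails}): to place $W_1$ in the domain of attraction of $\call_{\alpha,b}$ with the deterministic norming $a_n$ one needs the tail of $W_1$ to be genuinely regularly varying (and, for $\alpha=2$, the truncated second moment to be slowly varying), not merely squeezed between constant multiples of $1/h(t)$. Your proposed mechanism does not deliver this: the sandwich $\max_{i\in R_1}Z_i\leq W_1\leq \sigma_1\cdot\sum_{i\in R_1}Z_i$ together with \eqref{estimate} gives upper and lower bounds with different constants, and the upper bound is further complicated by the positive dependence between $\sigma_1$ and the $Z_i$'s inside the block (a large immigration prolongs the block), so no asymptotic equivalence $P_0(W_1>t)\sim c\,P(Z_0>t)$ follows from it. The paper's proof of Lemma~\ref{tails} is precisely the finer implementation of your "single large jump" idea that is actually needed: decompose $W_1$ at the first passage time $\varsigma_A$ above a high level $A$ as in \eqref{piruk-matrif}, kill the pre-$\varsigma_A$ piece and the post-$\varsigma_A$ sum of $Y_n$'s using \eqref{estimate} and the a priori bound of Lemma~\ref{parta3}, and identify the exact tail of the dominant term $Q_{\varsigma_A}$ through the total-progeny tail of Lemma~\ref{parta1} (the constant $E\bigl[\bigl(1+\sum_{i\geq1}\prod_{j\leq i}\phi_j\bigr)^\alpha\bigr]$), the existence of the $A\to\infty$ limit then coming for free because the left-hand side does not depend on $A$. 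Note also that Lemma~\ref{parta1} as proved in the paper uses Jensen's inequality with $\alpha\in(0,1)$ to get finiteness of that constant; for $\alpha\in[1,2]$ you must replace that step (e.g.\ by Minkowski's inequality, giving $\sum_i\bigl(E[\phi_0^\alpha]\bigr)^{i/\alpha}<\infty$). Without this finer analysis the claimed relation $P(T_1>t)\sim c\,P(Z_0>t)$, and hence the whole theorem, is not established.
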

Recall $\nu_n$ from \eqref{nu} and define
\beq
W_n=\sum_{i=\nu_{n-1}}^{\nu_n-1}X_i,\qquad n\in\nn .
\feq
Theorem~\ref{partial4} can be derived from stable limit theorems for
partial sums of i.i.d. variables, using the regenerative structure and the following lemma.
\begin{lemma}
\label{tails}
Let Assumption~\ref{assume1} hold. Then the following limit exists:
\beq
\lim_{t\to\infty} h(t)\cdot P_0(W_1>t).
\feq
Moreover, the limit is finite and strictly  positive.
\end{lemma}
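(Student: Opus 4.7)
My plan is to identify the limit as $c:=E[Q^\alpha],$ where $Q:=1+\phi_2+\phi_2\phi_3+\phi_2\phi_3\phi_4+\cdots$ is the ``perpetuity'' associated with the environment $\Phi.$ Since $X_0=0$ and $\nu_0=1,$ one has $X_1=Z_1$ and hence $W_1\geq Z_1,$ so Assumption~\ref{assume1}(A2) immediately furnishes the lower bound $\liminf_{t\to\infty} h(t)\cdot P_0(W_1>t)\geq 1>0.$ For the matching upper bound I would condition on $Z_1$ and study $W_1$ in the regime $Z_1\to\infty.$ Decompose $W_1=W_1^{(1)}+W_1^{(2)},$ where $W_1^{(1)}:=\sum_{k=1}^{\sigma_1}X_{1,k}$ is the total contribution to $W_1$ of the descendants of the founding immigration wave $Z_1,$ and $W_1^{(2)}:=W_1-W_1^{(1)}$ collects the contribution of all later immigrants $(Z_k)_{k\geq 2}$ and their progeny within the cycle.

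Conditional on the environment $\Phi$ and on $Z_1=z,$ the variable $X_{1,k}$ is a sum of $z$ independent Bernoullis with mean $\prod_{i=2}^k\phi_i,$ so a Chebyshev bound on the quenched variance shows that, uniformly over $k\leq A\log z,$ one has $X_{1,k}/z\to \prod_{i=2}^k\phi_i$ in probability as $z\to\infty.$ Since $E[\phi_0]<1$ by (A1) (because $\phi_0\in[0,1]$ and $P(\phi_0=1)<1$), the extinction time of the founding wave is $O_P(\log z),$ and the tail sum $\sum_{k\geq 1}\prod_{i=2}^k\phi_i$ converges a.s.\ to $Q-1.$ Summation yields $W_1^{(1)}/Z_1\to Q$ in probability as $Z_1\to\infty.$ The independence of $(Z_k)_{k\geq 2}$ from $Z_1,$ together with the exponential tail of $\sigma_1$ from Proposition~\ref{rect}(b), then gives $W_1^{(2)}=o_P(Z_1).$ The same argument that yields $E[Q^{\alpha+\veps}]<\infty$ for some small $\veps>0$ (via the perpetuity identity $Q=1+\phi_2 Q'$ with $Q'=_DQ$ independent of $\phi_2,$ together with the fact that $E[\phi_0^{\alpha+\veps}]\leq E[\phi_0^\alpha]<1$) allows a Breiman-type estimate, in the spirit of the technique adapted from \cite{grey} in the proof of Theorem~\ref{main}, to produce
\beq
P_0(W_1>t)\sim E[Q^\alpha]\cdot P(Z_1>t)\sim\frac{E[Q^\alpha]}{h(t)},
\feq
which is the lemma with $c=E[Q^\alpha]\in(0,\infty).$

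The principal technical obstacle is to upgrade the in-probability statement $W_1/Z_1\to Q$ to quantitative control strong enough to pin down the tail $P_0(W_1>t)$ asymptotically: one must rule out configurations in which $Z_1$ is only moderate while one of $Z_2,\ldots,Z_{\sigma_1}$ produces the excess $W_1>t,$ as well as configurations in which the binomial thinning deviates significantly from its mean. Both are handled by truncating $\sigma_1$ at the level $C\log t$ via Proposition~\ref{rect}(b), cutting $Z_1$ at an appropriately chosen scale, and applying a Chebyshev bound to the binomial thinning, in complete analogy with the corresponding step in the proof of Theorem~\ref{main}.
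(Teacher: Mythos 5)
There is a genuine gap, and it is precisely at the point you flag as ``the principal technical obstacle'': the configurations in which $Z_1$ is moderate but some later wave $Z_k,$ $2\leq k<\nu_1,$ is large \emph{cannot} be ruled out — they contribute to the tail of $W_1$ at the exact order $1/h(t)$ and hence enter the limiting constant. Indeed, since every individual born during the first epoch dies before $\nu_1,$ one has $W_1=\sum_{k=1}^{\nu_1-1}Y_k,$ and already $P_0(W_1>t)\geq P_0\bigl(Z_2>t,\ \phi_2\circ Z_1\neq 0\bigr)=P(Z_0>t)\cdot P_0(\phi_2\circ Z_1\neq 0),$ where the second factor is a fixed positive number and the event is independent of $Z_2.$ No truncation of $\sigma_1$ at $C\log t$ or cutting of $Z_1$ removes such events; by the one-big-jump heuristic the true limit is a sum over all possible jump times $k$ within the epoch (weighted by the probability that the epoch survives to time $k$), so your claimed value $E[Q^\alpha]$ — which by Lemma~\ref{parta1} is exactly the tail constant of $Y_1$ alone, i.e.\ the $k=1$ term — strictly undercounts the limit whenever $P_0(\nu_1>2)>0.$ Relatedly, the assertion $W_1^{(2)}=o_P(Z_1)$ addresses typical sizes, not tails: $W_1^{(2)}$ itself has a regularly varying tail of order $1/h(t),$ which is what matters here. (Your lower bound via $W_1\geq Z_1$ and the Breiman-type treatment of the $Z_1$-large scenario are fine as far as they go.)

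The paper's proof is organized so that the big jump may occur at \emph{any} time in the epoch: it stops at $\varsigma_A=\inf\{n:X_n>A\}$ and writes, on $\{\varsigma_A<\nu_1\},$ $W_1=\sum_{n<\varsigma_A}X_n+Q_{\varsigma_A}+\sum_{\varsigma_A<n<\nu_1}Y_n,$ where $Q_{\varsigma_A}$ is the population at $\varsigma_A$ together with its total progeny. The first and third terms are shown negligible uniformly in $t$ for large $A$ using the exponential bound of Proposition~\ref{rect}(b) and Lemma~\ref{vai}, while Lemma~\ref{parta14} proves that $h(t)\cdot P_0(Q_{\varsigma_A}\geq t,\varsigma_A<\nu_1)$ has a finite limit — the key being that on $\{\varsigma_A<\nu_1\}$ the pre-jump population is below $A,$ so a large $X_{\varsigma_A}$ must come from a single large $Z_n,$ whose time of occurrence is summed out against $E_0[\nu_1]<\infty.$ Notably, the paper never identifies the constant in closed form; it only needs existence, finiteness, and positivity. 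If you wish to salvage your approach, you would have to replace the decomposition $W_1=W_1^{(1)}+W_1^{(2)}$ by a decomposition according to which wave is the large one, prove a ``no two big jumps'' estimate, and sum the resulting per-wave constants — at which point you essentially reconstruct the paper's argument with extra bookkeeping.
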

The proof of the lemma given below in Section~\ref{tails-proof} is (although technical details are quite different) along the line of the proof of a similar result given
for a different branching process in \cite{kks1}. We remark that though a similar technique can be used to obtain Theorem~~\ref{partial},
we prefer to employe a more direct approach in the case $\alpha\in(0,1).$
Theorem~\ref{partial4} follows from the above lemma by using a standard argument, which is outlined in \cite{kks1} in for the case $h(x)=x^{-1}.$
Since only an obvious minor modification is required to extend the argument to a general $h$ (see, for instance, \cite{stable4} for $h(x)=x^{-2}$),
we omit details of this argument here.
\par
If an appropriate second moment condition is assumed, one can establish the following functional
limit theorem for normalized partial sums of $\calx.$ Let $D(\rr_+,\rr)$ denote the set of real-valued c\`{a}dl\`{a}g functions
on $\rr_+:=[0,\infty),$ endowed with the Skorokhod $J_1$-topology. Let $\lfloor x \rfloor$ denote the integer part of $x\in\rr.$ We have:
\begin{theorem}
\label{clt}
Let Assumption~\ref{assume4} hold with a constant $\beta>2.$
Then, as $n\to\infty$, the sequence of processes
\beq
S_t^{(n)}=n^{-1/2}\bigl(S_{\lfloor nt \rfloor}-nt\mu \bigr),\qquad t\in[0,1].
\feq
in $D(\rr_+,\rr)$ converges weakly to a non-degenerate Brownian motion $W_t,$ $t\in [0,1].$
\end{theorem}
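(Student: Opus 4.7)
\emph{Proof plan.} The approach is to exploit the regenerative decomposition provided by Proposition~\ref{rect} and reduce the problem to an invariance principle for an i.i.d.\ sequence. Put $V_k:=W_k-\mu\sigma_k$ for $k\in\nn$; by Proposition~\ref{rect}(a) the pairs $(\sigma_k,W_k)_{k\geq 1}$ are i.i.d.\ under $P_0$, and the renewal--reward identity (combined with Corollary~\ref{corol} and Proposition~\ref{lln}) gives $E_0[W_1]=\mu\, E_0[\sigma_1]$, hence $E_0[V_1]=0$. Let $N(m):=\max\{n:\nu_n\leq m+1\}$ be the number of complete renewal epochs by time $m$. Then
\beq
S_m-m\mu=\sum_{k=1}^{N(m)}V_k+r_m,\qquad |r_m|\leq W_{N(m)+1}+\mu\,\sigma_{N(m)+1},
\feq
and the strong law of large numbers yields $N(m)/m\to 1/E_0[\sigma_1]$ a.s.

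The principal technical step is verifying $E_0[W_1^2]<\infty$ under Assumption~\ref{assume4} with $\beta>2$; the moments $E_0[\sigma_1^r]$ are all finite thanks to \eqref{estimate}. Since $X_0=0$ and the thinning operation can only decrease the count, a simple induction gives $X_i\leq Z_1+\cdots+Z_i$ for $1\leq i\leq\sigma_1$; combined with Cauchy--Schwarz this yields
\beq
W_1^2\leq \sigma_1\sum_{i=1}^{\sigma_1}X_i^2\leq \sigma_1^3\sum_{j=1}^\infty Z_j^2\,\ind{\sigma_1\geq j}.
\feq
An application of H\"older's inequality with exponents $p=\beta/2$ and $q=\beta/(\beta-2)$ to each summand $E_0[\sigma_1^3 Z_j^2\ind{\sigma_1\geq j}]$ produces a bound of the form $E[Z_0^\beta]^{2/\beta}\,E_0[\sigma_1^{3q}\ind{\sigma_1\geq j}]^{1/q}$ that decays geometrically in $j$ by virtue of \eqref{estimate}. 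Summing over $j$ gives $E_0[W_1^2]<\infty$ and hence $\sigma_V^2:=\mathrm{Var}(V_1)\in[0,\infty)$; strict positivity of $\sigma_V^2$ under (A1) is a routine non-triviality check on the event $\{\sigma_1=1\}=\{\phi_2\circ Z_1=0\}.$

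Donsker's theorem applied to $(V_k)$ now produces $n^{-1/2}\sum_{k=1}^{\lfloor ns\rfloor}V_k\Rightarrow \sigma_V B_s$ in $D([0,1],\rr)$ with the $J_1$-topology, where $B$ is a standard Brownian motion. Since the time-change $t\mapsto N(\lfloor nt\rfloor)/n$ converges uniformly on $[0,1]$ a.s.\ to the deterministic function $t\mapsto t/E_0[\sigma_1]$, the continuous mapping theorem for the composition map $(f,\psi)\mapsto f\circ\psi$ (applied jointly to the partial-sum process and the time change) yields
\beq
n^{-1/2}\sum_{k=1}^{N(\lfloor nt\rfloor)}V_k\Rightarrow \sigma_V\, B_{t/E_0[\sigma_1]}\stackrel{d}{=}\frac{\sigma_V}{\sqrt{E_0[\sigma_1]}}\,W_t,
\feq
with $W$ a standard Brownian motion. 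Finally, $n^{-1/2}\sup_{t\leq 1}|r_{\lfloor nt\rfloor}|\to 0$ in probability because $E_0[W_1^2+\sigma_1^2]<\infty$ entails $\max_{k\leq n}(W_k+\sigma_k)=o(\sqrt n)$ in probability, while the fractional-index discrepancy $n^{-1/2}(nt-\lfloor nt\rfloor)\mu$ is uniformly $O(n^{-1/2})$ on $[0,1]$. Combining these pieces gives the desired invariance principle. The main obstacle is the second-moment bound for $W_1$: since $\sigma_1$ is positively correlated with the noise terms $Z_j$, one must weight the H\"older estimate carefully in order to avoid a spurious $E[Z_0^4]<\infty$ requirement and instead make do with the stated $E[Z_0^\beta]<\infty$ for some $\beta>2$.
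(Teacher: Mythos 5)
Your proposal is essentially correct, but it is a genuinely different route from the paper's: the paper gives no argument at all for Theorem~\ref{clt}, disposing of it as a special case of Theorem~1.5 in \cite{multi} (a functional CLT for multitype branching processes with immigration in random environment). Your proof is self-contained and runs instead through the regenerative structure of Proposition~\ref{rect}, which is exactly the strategy the paper itself uses for Theorem~\ref{partial4}; in that sense you have supplied the ``missing'' direct argument. The two nontrivial ingredients are both handled soundly: the second-moment bound $E_0[W_1^2]<\infty$ via the domination $X_i\leq Z_1+\cdots+Z_i$ (valid since $X_0=0$ and thinning is monotone), Cauchy--Schwarz, and H\"older with exponents $\beta/2$ and $\beta/(\beta-2)$ against the exponential tail \eqref{estimate} — note H\"older needs no independence of $\sigma_1$ and $Z_j$, so your worry about their correlation is already answered by the inequality itself; and the Anscombe-type random time change, which works because $N(\lfloor nt\rfloor)/n$ converges uniformly in probability to the deterministic continuous function $t/E_0[\sigma_1]$ and the Donsker limit has continuous paths. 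The identity $E_0[W_1]=\mu E_0[\sigma_1]$ follows from the SLLN along regeneration times as you indicate. The one point you wave at rather than prove is strict positivity of $\sigma_V^2=\mathrm{Var}(W_1-\mu\sigma_1)$: under (A1) and (A4) alone this can actually fail (e.g.\ $Z_0$ deterministic and $\phi_0\equiv 0$ gives $S_n-n\mu\equiv 0$), so ``non-degenerate'' in the theorem implicitly excludes such degenerate coefficient laws; your check on $\{\sigma_1=1\}$ should be stated as requiring some genuine randomness in $(\phi_0,Z_0)$, which is a defect of the theorem's statement rather than of your argument. What your route buys is an elementary, internally consistent proof tied to the paper's own regeneration machinery; what the paper's citation buys is brevity and the extra generality (multitype, random environment) of \cite{multi}.
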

Theorem~\ref{clt} is a particular case of \cite[Theorem~1.5]{multi}, and therefore its proof is omitted.
Notice that the conditions of the theorem are satisfied if Assumption~\ref{assume1} holds with $\alpha>2.$
\section{Proof of the main results}
\label{proofs}
This section is devoted to the proof of the theorems stated in Section~\ref{results}
(namely, Theorems~\ref{main}, \ref{extremes}, ~\ref{partial} and~\ref{partial4}),
and is divided into four subsections correspondingly.
\subsection{Proof of Theorem~\ref{main}}
\label{proof-main}
First, we observe the following.
\begin{lemma}
\label{klemma}
Let $X\in\caln_+$ be a random variable in the underlying probability space such that
\item [(i)] $X$ is independent of $(\phi_n,Z_n,B_n)_{n\in\zz_+},$ where $B_n:=(B_{n,k})_{k\in \nn}.$
\item [(ii)] $\lim_{t\to\infty} h(t) \cdot P_\Phi(X>t)=1$ for some $h\in \calr_\alpha,$ $\alpha>0.$
\\
Then  $\lim_{t\to\infty} h(t) \cdot P_\Phi(\phi_0 \circ X >t)=\phi_0^\alpha.$
\end{lemma}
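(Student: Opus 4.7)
The plan is to work conditionally on $\Phi,$ so that $\phi_0$ becomes a fixed deterministic value in $[0,1],$ and to exploit that, given $X,$ the sum $\phi_0\circ X=\sum_{k=1}^X B_{0,k}$ is a $\mathrm{Binomial}(X,\phi_0)$ variable concentrated around its mean $\phi_0 X.$ Heuristically, this concentration makes the event $\{\phi_0\circ X>t\}$ coincide with $\{X>t/\phi_0\}$ up to a negligible error, after which hypothesis (ii) together with the regular variation relation $h(\lambda t)/h(t)\to\lambda^\alpha$ gives $P_\Phi(X>t/\phi_0)\sim 1/h(t/\phi_0)\sim \phi_0^\alpha/h(t).$ The boundary cases are immediate: if $\phi_0=0,$ then $\phi_0\circ X\equiv 0$ and $\phi_0^\alpha=0;$ if $\phi_0=1,$ then $\phi_0\circ X=X$ and the statement is just hypothesis (ii). So I may restrict to $\phi_0\in(0,1).$

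For the upper bound, fix a small $\veps\in(0,1)$ and decompose
\beq
P_\Phi(\phi_0\circ X>t)\leq P_\Phi\bigl(X>(1-\veps)t/\phi_0\bigr)+\sum_{k\leq (1-\veps)t/\phi_0}P\bigl(\mathrm{Bin}(k,\phi_0)>t\bigr)P_\Phi(X=k).
\feq
By (ii) and regular variation, the first summand multiplied by $h(t)$ tends to $(\phi_0/(1-\veps))^\alpha.$ In the sum, the restriction $k\leq(1-\veps)t/\phi_0$ forces $k\phi_0\leq(1-\veps)t,$ so Hoeffding's inequality yields $P\bigl(\mathrm{Bin}(k,\phi_0)>t\bigr)\leq \exp\bigl(-2(t-k\phi_0)^2/k\bigr)\leq \exp\bigl(-2\veps^2\phi_0 t/(1-\veps)\bigr),$ which is exponentially small in $t$ and hence $o(1/h(t)).$

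For the matching lower bound,
\beq
P_\Phi(\phi_0\circ X>t)\geq P_\Phi\bigl(X>(1+\veps)t/\phi_0\bigr)-\sum_{k>(1+\veps)t/\phi_0}P\bigl(\mathrm{Bin}(k,\phi_0)\leq t\bigr)P_\Phi(X=k).
\feq
The main term times $h(t)$ converges to $(\phi_0/(1+\veps))^\alpha.$ For the correction I would split the range at $k_\ast=2t/\phi_0$: on the band $(1+\veps)t/\phi_0<k\leq k_\ast$ one has $k\phi_0-t\geq\veps t$ and $k=O(t),$ so Hoeffding gives an estimate of the form $\exp(-c_\veps t);$ on the tail $k>k_\ast$ one has $k\phi_0-t\geq k\phi_0/2,$ so Hoeffding gives $\exp(-k\phi_0^2/2),$ whose sum against $P_\Phi(X=k)\leq 1$ is still exponentially small in $t.$ Letting $\veps\downarrow 0$ pinches both bounds to $\phi_0^\alpha.$

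The only delicate point is the correction term in the lower bound: a single uniform Hoeffding estimate does not decay uniformly over all $k>(1+\veps)t/\phi_0$ because the relevant Gaussian scale $\sqrt{k\phi_0(1-\phi_0)}$ grows with $k;$ the two-region split resolves this by observing that once $k$ is much larger than $t/\phi_0,$ the typical value $k\phi_0$ of $\mathrm{Bin}(k,\phi_0)$ overwhelmingly exceeds $t.$
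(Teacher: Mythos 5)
Your proof is correct and follows essentially the same route as the paper: conditioning on $\Phi$, splitting according to whether $X$ lies above or below $t/\phi_0$ with an $\veps$-margin, killing the cross terms by binomial concentration (you use Hoeffding where the paper invokes Chernoff/Cram\'{e}r bounds), and evaluating the main term via $h(\lambda t)/h(t)\to\lambda^\alpha$ before letting $\veps\downarrow 0$. Your explicit two-region treatment of the lower-bound correction term is in fact slightly more careful than the paper's corresponding step, but it is the same underlying argument.
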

\begin{proof}[Proof of Lemma~\ref{klemma}]
Fix a constant $\veps\in (0,1).$ For $t>0$ define the following three events:
\beq
A_{t,\veps}&=&\bigl\{X>t\cdot (\phi_0^{-1}+\veps)\bigr \},\\
B_{t,\veps}&=&\bigl\{t\cdot (\phi_0^{-1}-\veps)  < X\leq t\cdot (\phi_0^{-1}+\veps)\bigr \},\\
C_{t,\veps}&=&\bigl\{X\leq t\cdot (\phi_0^{-1}-\veps) \bigr \}.
\feq
We will use the following splitting formula:
\beq
P_\Phi\bigl(\phi_0 \circ X>t\bigl)=P_\Phi\bigl(\phi_0 \circ X>t;\, A_{t,\veps}\bigr)+
P_\Phi\bigl(\phi_0 \circ X>t;\, B_{t,\veps}\bigr)
+P_\Phi\bigl(\phi_0 \circ X>t;\, C_{t,\veps}\bigr).
\feq
By the law of large numbers,
\beq
\lim_{n\to\infty} \frac{1}{n}\sum_{k=1}^n B_{1,k}=\phi_0,\qquad P-\as
\feq
Since $h(t)$ is regularly varying, Chernoff's bound  (Cram\'{e}r's large deviation
theorem for coin flipping, see \cite{ldpbook}) applied to the partial sums $\sum_{k=1}^n B_k$ implies that
\beq
0\leq \limsup_{t\to\infty} h(t)\cdot P_\Phi\bigl(\phi_0 \circ X>t;\, C_{t,\veps}\bigr)
\leq \limsup_{t\to\infty} h(t)\cdot P_\Phi\Bigl({\sum}_{k=1}^{\lfloor t(\phi_0^{-1}-\veps)\rfloor} B_k> t\Bigr)=0.
\feq
Next, by the conditions of the lemma,
\beq
&&
\lim_{t\to\infty} h(t)\cdot P_\Phi\bigl(\phi_0 \circ X>t;\, B_{t,\veps}\bigr)\leq \lim_{t\to\infty} h(t)\cdot P_\Phi(B_{t,\veps})
\\
&&
\qquad
=\bigl[(\phi_0^{-1}-\veps)^{-\alpha}-(\phi_0^{-1}+\veps)^{-\alpha}\bigr]\to_{\veps \to 0} 0.
\feq
Finally, using again the large deviation principle for $\sum_{k=1}^n B_k,$
\beq
&&\liminf_{t\to\infty}\, h(t)\cdot P_\Phi\bigl(\phi_0 \circ X>t;\, A_{t,\veps}\bigr)=
\liminf_{t\to\infty}\, h(t)\cdot \Bigl[P_\Phi\bigl( A_{t,\veps}\bigr)-P_\Phi\bigl(\phi_0 \circ X\leq t;\, A_{t,\veps}\bigr)\Bigr]
\\
&&
\qquad
\geq \liminf_{t\to\infty}\, h(t)\cdot P_\Phi\bigl(A_{t,\veps}\bigr)
=
(\phi_0^{-1}+\veps)^{-\alpha}.
\feq
On the other hand, clearly,
\beq
\liminf_{t\to\infty}\, h(t)\cdot P_\Phi\bigl(\phi_0 \circ X>t;\, A_{t,\veps}\bigr)\leq \liminf_{t\to\infty}\, h(t)\cdot P_\Phi\bigl(A_{t,\veps}\bigr)=
(\phi_0^{-1}+\veps)^{-\alpha}.
\feq
Since $\veps>0$ is arbitrary and $(\phi_0^{-1}+\veps)^{-\alpha}
\to \phi_0^\alpha$ as $\veps$ goes to zero, this completes the proof of the lemma.
\end{proof}
\begin{remark}
\label{remark1}
The above proof of Lemma~\ref{klemma} can be adopted without modification for a more general type of sums $\sum_{k=1}^X B_k,$
where $X\in\caln_+$ has regularly varying distribution tails and $(B_k)_{k\in\nn}$ are independent of $X.$
In fact, the only property of the sequence $B_k$ required by the proof is
the availability of a non-trivial large deviations upper bound for its partial sums.
Note that if $f(\lambda):=E_\Phi\bigl[e^{\lambda B_1}\bigr]$ is finite in a neighborhood of zero,
such a bound in the form $P_\Phi\Bigl(\Bigl|\frac{1}{n}\sum_{k=1}^n B_k-E_\Phi[B_1]\Bigr|>x\Bigr)\leq c(x)e^{-nI(x)}$
with suitable constants $c(x),I(x)>0$ holds for any $x>0$ (see, for instance, the first inequality in the proof of Lemma~2.2.20 in \cite{ldpbook}).
\end{remark}
Recall (see, for instance, \cite[ Lemma~1.3.1]{extreM}) that if $X$ and $Y$ are two independent random variables
such that $\lim_{x\to\infty} h(x)\cdot P(X>x)=c_1>0$ and $\lim_{x\to\infty} h(x)\cdot P(Y>x)=c_2>0$
for some $h\in\calr_\alpha,$ $\alpha>0,$ then
\beqn
\label{sumrv}
\lim_{x\to\infty} h(x)\cdot P(X+Y>x)=c_1+c_2.
\feqn
Using this property and iterating \eqref{version}, one can deduce from Lemma~\ref{klemma} the following corollary.
Consider (in an enlarged probability space, if needed) a sequence $\witi \calx=\bigl(\witi X_n\bigr)_{n\in\zz_+}$ which solves \eqref{version},
that is a sequence such that
\beqn
\label{version4}
\witi X_n=\sum_{k=1}^{\witi X_{n-1}} B_{n,k}+Z_n,\qquad n\in \nn,
\feqn
for some initial (not necessarily equal to zero) random value $\witi X_0.$
\begin{corollary}
\label{icor}
Let Assumption~\ref{assume1} hold and suppose in addition that the following two conditions are satisfied:
\begin{itemize}
\item [(i)] $\witi X_0$ is independent of $(\phi_k,B_k,Z_k)_{k>0},$ where $B_k=(B_{k,j})_{j\in \nn}.$
\item [(ii)] $\lim_{t\to\infty} h(t) \cdot P_\Phi\bigl(\witi X_0>t\bigr)=c_0$ for some random variable $c_0=c_0(\Phi).$
\end{itemize}
Then  $\,\lim_{t\to\infty} h(t) \cdot P_\Phi\bigl(\witi X_n>t\bigr)=c_n$ for any $n \in\nn,$
where the random variables $c_n=c_n(\Phi)$ are defined recursively by
\beqn
\label{recs}
c_{n+1}=c_n \phi_{n+1}^\alpha +1,\qquad n\in\zz_+.
\feqn
\end{corollary}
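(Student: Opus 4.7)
The plan is to argue by induction on $n\geq 0.$ The base case $n=0$ is precisely hypothesis~(ii), so the main task is the inductive step from $n$ to $n+1.$

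Assume that $\lim_{t\to\infty} h(t) \cdot P_\Phi(\witi X_n>t)=c_n(\Phi).$ Since $\witi X_n$ is built recursively from $\witi X_0$ and $(\phi_k,B_k,Z_k)_{1\leq k\leq n},$ the independence hypothesis~(i) propagates to give that, conditionally on $\Phi,$ the variable $\witi X_n$ is independent of $(\phi_{n+1},B_{n+1},Z_{n+1}).$ Therefore Lemma~\ref{klemma} applies to $\witi X_n$ provided we first normalize its tail constant: writing the hypothesis as $\lim h(t) \cdot P_\Phi\bigl(\witi X_n/c_n^{1/\alpha}>t\bigr)=1$ (where $c_n=c_n(\Phi)$ is deterministic under $P_\Phi$) and using that $h\in\calr_\alpha,$ or simply by appealing to the straightforward extension of Lemma~\ref{klemma} noted in Remark~\ref{remark1} (the proof there works verbatim once $1$ is replaced by $c_n$ throughout), we obtain
\beq
\lim_{t\to\infty} h(t)\cdot P_\Phi\bigl(\phi_{n+1}\circ \witi X_n>t\bigr)=c_n \phi_{n+1}^\alpha.
\feq

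Next, under $P_\Phi$ the variable $Z_{n+1}$ is independent of $\phi_{n+1}\circ \witi X_n$ and, by Assumption~(A2), satisfies $\lim_{t\to\infty} h(t) \cdot P_\Phi(Z_{n+1}>t)=1.$ Applying the standard convolution identity \eqref{sumrv} (conditionally on $\Phi$) to the two independent summands in the recursion \eqref{version4} yields
\beq
\lim_{t\to\infty} h(t)\cdot P_\Phi(\witi X_{n+1}>t)=c_n \phi_{n+1}^\alpha +1=c_{n+1},
\feq
which closes the induction.

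The only delicate point, and the step I expect to require the most care, is justifying the reduction of the ``tail constant equal to $1$'' hypothesis of Lemma~\ref{klemma} to the general constant $c_n(\Phi).$ This is harmless here because conditionally on $\Phi$ the quantity $c_n$ is a fixed nonnegative number, and the large deviation bound for $\sum_{k=1}^m B_{n+1,k}$ used inside the proof of Lemma~\ref{klemma} depends only on $\phi_{n+1},$ not on the size of this constant; alternatively, one may simply rerun the three-piece splitting of Lemma~\ref{klemma} with the events $A_{t,\veps},B_{t,\veps},C_{t,\veps}$ defined for $\witi X_n$ and the same $\veps\downarrow 0$ limiting argument, which produces the factor $c_n\phi_{n+1}^\alpha$ directly.
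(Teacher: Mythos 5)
Your proof is correct and follows exactly the route the paper intends: the paper derives the corollary by iterating \eqref{version}, applying Lemma~\ref{klemma} (with the tail constant $1$ replaced by the $\Phi$-measurable constant $c_n$, harmless under $P_\Phi$) and the convolution property \eqref{sumrv}, which is precisely your inductive step. The paper gives no further details, so your write-up simply makes the same argument explicit.
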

The recursive relation \eqref{recs} implies that
\beqn
\label{cnbar}
c_n=\bar c_n+c_0\prod_{j=1}^n \phi_j^\alpha ,\qquad \mbox{where}\qquad \bar c_n=1+\sum_{k=2}^n \prod_{j=k}^n \phi_j^\alpha,
\feqn
and hence (see, for instance, Theorem~1 in \cite{stationar1}) the random variables
$c_n$ converge in distribution, as $n\to\infty,$   to
\beq
c_\infty:=1+\sum_{k=0}^\infty \prod_{i=0}^k \phi_{-i}^\alpha .
\feq
Furthermore, we have the following:
\begin{corollary}
\label{cori}
Suppose that the conditions of Corollary~\ref{icor} are satisfied and, in addition,
there exist a positive constant $C>0$ and such that the following holds:
\beq
P\Bigl(\sup_{t>0} \bigl\{h(t) \cdot P_\Phi\bigl(\witi X_0>t\bigr)\bigr\}<C\Bigr)=1.
\feq
Then the following limit exists and the identity holds:
\beq
\lim_{n\to\infty} h(t)\cdot P\bigl( \witi X_n>t\bigr)=E[c_n],\qquad n\in\nn,
\feq
where $c_n$ are random variables defined in \eqref{cnbar}.
\end{corollary}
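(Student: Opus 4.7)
The plan is to upgrade the almost sure pointwise convergence $h(t)\cdot P_\Phi\bigl(\witi X_n>t\bigr)\to c_n(\Phi)$ supplied by Corollary~\ref{icor} into convergence of the corresponding $\Phi$-expectations, by way of the dominated convergence theorem. Since $h(t)\cdot P\bigl(\witi X_n>t\bigr)=E\bigl[h(t)\cdot P_\Phi(\witi X_n>t)\bigr]$, and since the pointwise limit $c_n(\Phi)$ is integrable by formula \eqref{cnbar} together with the bound $c_0(\Phi)\leq C$ a.s.\ that follows from the hypothesis (giving $c_n\leq \bar c_n+C\leq n+C$), all that is needed is a $t$-uniform integrable dominating function for $h(t)\cdot P_\Phi(\witi X_n>t)$.

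Such a dominating function is produced using the elementary monotonicity of thinning: $\phi\circ X\leq X$ deterministically, so iterating \eqref{version4} yields the almost sure bound $\witi X_n\leq \witi X_0+\sum_{k=1}^n Z_k$. A union bound then gives
\[
P_\Phi\bigl(\witi X_n>t\bigr)\leq P_\Phi\bigl(\witi X_0>t/(n+1)\bigr)+\sum_{k=1}^n P\bigl(Z_k>t/(n+1)\bigr),
\]
where we have used that $(Z_k)_{k\geq 1}$ are independent of $\Phi$. Multiplying through by $h(t)$, invoking the assumed almost sure bound $\sup_s h(s)\cdot P_\Phi(\witi X_0>s)\leq C$, and setting $C':=\sup_{s>0}h(s)\cdot P(Z_0>s)<\infty$ by virtue of (A3), we obtain
\[
h(t)\cdot P_\Phi\bigl(\witi X_n>t\bigr)\leq \frac{h(t)}{h(t/(n+1))}\bigl(C+nC'\bigr).
\]
Since $h\in\calr_\alpha$, the uniform convergence theorem for regularly varying functions (\cite{rvbook}, Theorem~1.5.2) yields $h(t)/h(t/(n+1))\to (n+1)^\alpha$ as $t\to\infty$, so the right-hand side is dominated by a finite deterministic constant for all $t$ exceeding some threshold $t_0=t_0(n)$.

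With the deterministic upper envelope in hand, dominated convergence delivers
\[
\lim_{t\to\infty}h(t)\cdot P\bigl(\witi X_n>t\bigr)=E\Bigl[\lim_{t\to\infty}h(t)\cdot P_\Phi\bigl(\witi X_n>t\bigr)\Bigr]=E[c_n],
\]
as claimed. The only even mildly subtle ingredient is the use of the uniform convergence theorem to control $h(t)/h(t/(n+1))$ away from $\Phi$; everything else is a direct combination of the hypothesis on $\witi X_0$, condition (A3) on $Z_0$, and the deterministic inequality $\phi\circ X\leq X$ that underlies the binomial thinning operator.
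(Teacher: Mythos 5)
Your proof is correct and follows essentially the same route as the paper: both bound $\witi X_n\leq \witi X_0+\sum_{k=1}^n Z_k$, split the tail probability, use the hypothesis on $\witi X_0$ together with (A3) and regular variation of $h$ to obtain a deterministic bound on $h(t)\cdot P_\Phi(\witi X_n>t)$ valid for large $t$, and then apply bounded (dominated) convergence to pass the limit inside $E[\,\cdot\,]$. The only cosmetic difference is that the paper splits at $t/2$ and $t/(2n)$ and invokes Lemma~1 of \cite{grey} for the ratio bounds, whereas you split into $n+1$ pieces and use the defining property of $h\in\calr_\alpha$ (the uniform convergence theorem is not really needed since the ratio $h(t)/h(t/(n+1))$ is deterministic); both yield the same dominating constant.
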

\begin{proof}[Proof of Corollary~\ref{cori}]
Corollary~\ref{icor} and the bounded convergence theorem imply that
\beqn
\label{enable}
&&
\lim_{t\to\infty} h(t)\cdot P\bigl(\witi X_n>t\bigr)
=\lim_{t\to\infty} h(t)\cdot E\bigl[P_\Phi\bigl(\witi X_n>t\bigr)\bigr]
\nonumber
\\
&&
\qquad
= E\bigl[\lim_{t\to\infty} h(t) \cdot P_\Phi\bigl(\witi X_n>t\bigr)\bigr]=E[c_n].
\feqn
To justify interchanging of the limit with the expectation, observe that
$\witi X_n\leq \witi X_0+\sum_{k=1}^n Z_k$ and hence, by virtue of assumption \emph{(A3)},
the following inequalities hold with probability one for some positive constant $C_1>0:$
\beq
&&
h(t)\cdot P_\Phi\bigl(\witi X_n>t\bigr) \leq h(t)\cdot P_\Phi\bigl(\witi X_0>t/2\bigr)
+h(t)\cdot P_\Phi\Bigl(\sum_{k=1}^n Z_k>t/2\Bigr)
\\
&&
\qquad
\leq h(t)\cdot P_\Phi\bigl(\witi X_0>t/2\bigr)
+n h(t)\cdot P\bigl(Z_0>t/(2n)\bigr)
\\
&&
\qquad
\leq  C\frac{h(t)}{h(t/2)}+C_1n\frac{h(t)}{h(t/(2n))}.
\feq
It follows (see, for instance, \cite[Lemma~1]{grey}) that there exists a constant $C_2>0$ such that
\beq
P\Bigl(\sup_{t>t_0} \bigl\{h(t)\cdot P_\Phi\bigl(\witi X_n>t\bigr)\bigr\}<C_2\Bigr)=1.
\feq
This enables one to apply the bounded convergence theorem in \eqref{enable} and thus completes the proof of the corollary.
\end{proof}
In what follows notations $X\leq_D Y$ and $X \geq_D Y$ for random variables $X$ and $Y$ are used to indicate that
$P(X>t)\leq P(Y>t)$ or, respectively,  $P(X>t)\geq P(Y>t)$ holds for all $t\in\rr.$
In order to exploit Corollary~\ref{cori} in the proof of Theorem~\ref{main}, we need the following:
\begin{lemma}
Suppose that the conditions of Corollary~\ref{cori} are satisfied. Then:
\label{ordering}
\item [(a)] If $\witi X_0\leq_D \witi X_1,$ then $\witi X_n\leq_D \witi X_{n+1}$ for all $n\in\nn.$
\item [(b)] If $\witi X_0\geq_D \witi X_1,$ then $\witi X_n\geq_D \witi X_{n+1}$ for all $n\in\nn.$
\end{lemma}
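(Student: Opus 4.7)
The plan is induction on $n$, treating parts (a) and (b) separately (they are symmetric, so I concentrate on (a)). The base case is precisely the hypothesis $\witi X_0\leq_D \witi X_1$, so only the inductive step needs work.

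The core observation is a pathwise monotonicity property of the one-step update. For each fixed realization $\omega=(\phi,(B_k)_{k\in\nn},Z)$ of a triple distributed like $(\phi_1,(B_{1,k})_{k\in\nn},Z_1)$, define the deterministic map
\[
\Psi_\omega(x):=\sum_{k=1}^x B_k+Z,\qquad x\in\caln_+.
\]
Since $B_k\in\{0,1\}$ is non-negative, $\Psi_\omega$ is non-decreasing in $x$. This is the only ``analytic'' input to the lemma; everything else is a coupling argument.

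Given the inductive hypothesis $\witi X_n\leq_D \witi X_{n+1}$, both variables are $\zz_+$-valued, so the quantile (monotone) coupling produces, on an enlarged probability space, a pair $(U^*,V^*)$ with $U^*=_D \witi X_n$, $V^*=_D\witi X_{n+1}$, and $U^*\leq V^*$ almost surely. I would also carry a fresh copy $\omega$ of $(\phi_1,B_1,Z_1)$ independent of $(U^*,V^*)$. Pathwise monotonicity then gives $\Psi_\omega(U^*)\leq \Psi_\omega(V^*)$ surely, hence $\Psi_\omega(U^*)\leq_D \Psi_\omega(V^*)$. To finish, I identify the marginal laws: because $\witi X_n$ is independent of $(\phi_{n+1},B_{n+1},Z_{n+1})$ and the latter has the same law as $\omega$, one has $\Psi_\omega(U^*)=_D \witi X_{n+1}$; the analogous identification at step $n+2$ gives $\Psi_\omega(V^*)=_D \witi X_{n+2}$. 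Combining these yields $\witi X_{n+1}\leq_D \witi X_{n+2}$, closing the induction. Part (b) is proved verbatim with all inequalities reversed, using the same monotonicity of $\Psi_\omega$.

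I do not anticipate a genuine obstacle; the only care needed is the bookkeeping that moves between the original driving sequence and the auxiliary $\omega$ via the i.i.d.\ hypothesis on $(\phi_j,B_j,Z_j)_{j\in\nn}$, and the reminder that the monotone coupling only matches marginals of $\witi X_n$ and $\witi X_{n+1}$ rather than their joint law along the trajectory (which is not needed since all randomness driving the next step is fresh).
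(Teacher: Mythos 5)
Your proposal is correct and follows essentially the same route as the paper: induction, a quantile (monotone) coupling of $\witi X_n$ and $\witi X_{n+1}$, pathwise monotonicity of the thinning-plus-immigration update in its integer argument, and the i.i.d.\ structure of the coefficients to identify the marginal laws after one more step (the paper does this last bookkeeping by an index shift, which is the same device as your fresh copy $\omega$). No gap.
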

\begin{proof}[Proof of Lemma~\ref{ordering}] $\mbox{}$ The proof is by induction.
Suppose first that $\witi X_{n-1}\leq_D \witi X_n$ for some $n\in\nn,$ $\witi X_{n-1}$ is independent of
$(\phi_k,B_k,Z_k)_{k> n-1},$ and $\witi X_n$ is independent of $(\phi_k,B_k,Z_k)_{k> n}.$ We will now use the following standard trick to construct an
auxiliary random pair $(V_{n-1},V_n)$ such that
\beqn
\label{coupling}
P(V_{n-1}\leq V_n=1),\qquad V_{n-1}=_P \witi X_{n-1},\qquad \mbox{and}\qquad  V_n=_P \witi X_n.
\feqn
Let $U$ be a uniform random variable on $[0,1],$ independent of the random coefficients sequence $(\Phi,\calz).$ Denote
by $F_n$ and $F_{n-1},$ respectively, the distribution functions of $X_n$ and $X_{n-1}.$
Set $V_n=F_n^{-1}(U)$ and $V_{n-1}=F_{n-1}^{-1}(U),$ where $F^{-1}(y):=\inf\{x\in\rr : F (x) \geq y\},$ $y\in [0,1],$
with the convention that $\inf \emptyset= \infty.$
\par
Let $\witi X_{n+1}=\phi_{n+1}\circ \witi X_n+Z_{n+1}.$ Then $\witi X_{n+1}$ is independent of $(\phi_k,B_k,Z_k)_{k> n+1}.$
Furthermore, since $(V_{n-1},V_n)$ is independent of $(\Phi,\calz),$ we obtain for any $t>0,$
\beqn
\label{joke}
P\bigl(\witi X_{n+1}>t\bigr)&=&P\bigl(\phi_{n+1} \circ \witi X_n+Z_{n+1}>t\bigr)
=P(\phi_{n+1} \circ V_n+Z_{n+1}>t)
\nonumber
\\
&\geq&
P(\phi_{n+1} \circ V_{n-1}+Z_{n+1}>t)=P(\phi_n \circ V_{n-1}+Z_n>t)
\\
\nonumber
&=&
P\bigl(\phi_n \circ \witi X_{n-1}+Z_n>t\bigr)=P\bigl(\witi X_n>t\bigr).
\feqn
This shows that part (a) of the lemma holds true. The same argument, but with $\leq$ replaced by $\geq$ and vice versa
in the base of induction, \eqref{coupling}, and \eqref{joke}, yields part (b).
\end{proof}
We are now in a position to complete the proof of Theorem~\ref{main}.
First, we have:
\begin{lemma}
\label{domin}
There exists a random variable $\witi X_0\geq 0$ satisfying the conditions of Corollary~\ref{cori},
such that $\witi X_1\geq_D \witi X_0.$
\end{lemma}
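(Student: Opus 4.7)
The plan is to exhibit a concrete candidate and verify the three requirements of Corollary~\ref{cori} along with the stochastic-ordering condition $\widetilde X_1 \geq_D \widetilde X_0$. The natural choice is simply $\widetilde X_0 := Z_0$, and I would argue that this choice works with no further modification.

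First, I would check the independence hypothesis (i) of Corollary~\ref{icor}. Since $\calz = (Z_n)_{n\in\zz}$ is an i.i.d. sequence independent of $\Phi,$ and $B$ is conditionally independent given $\Phi,$ the random variable $Z_0$ is independent of the collection $(\phi_k, B_k, Z_k)_{k>0}$. Next I would verify condition (ii): since $Z_0$ is independent of $\Phi,$ we have $P_\Phi(Z_0 > t) = P(Z_0 > t)$ almost surely, so by Assumption (A2), $h(t)\cdot P_\Phi(Z_0 > t)\to 1$ almost surely. Thus condition (ii) holds with the deterministic random variable $c_0 \equiv 1$. Condition (iii) — the uniform-in-$t$ bound — follows immediately from the same identity together with Assumption (A3), which provides $\sup_{t>0} h(t)\cdot P(Z_0 > t) < \infty$, yielding any $C$ strictly larger than this supremum.

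It remains to establish the stochastic ordering $\widetilde X_1 \geq_D \widetilde X_0$. By the recursion \eqref{version4} applied to $\widetilde X_0 = Z_0$, we have $\widetilde X_1 = \phi_1 \circ Z_0 + Z_1 \geq Z_1$ pointwise, since the thinned sum is non-negative. Because $(Z_n)$ are i.i.d., $Z_1 =_D Z_0 = \widetilde X_0$, so for every $t\in\rr$,
\beq
P(\widetilde X_1 > t) \geq P(Z_1 > t) = P(\widetilde X_0 > t),
\feq
which is precisely $\widetilde X_1 \geq_D \widetilde X_0$.

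There is no real obstacle here; the lemma is essentially a matter of making the correct choice. The only point that might look subtle at first glance is condition (ii), which permits the limit $c_0$ to depend on $\Phi,$ but choosing $\widetilde X_0 = Z_0$ trivialises this dependence since $Z_0$ is independent of $\Phi.$ This same trivialisation is what makes condition (iii) automatic from (A3) rather than requiring a separate argument.
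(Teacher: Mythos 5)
Your proposal is correct and takes essentially the same route as the paper, which simply sets $\witi X_0=Z_{-1}$ and leaves the verification to the reader; your choice $\witi X_0=Z_0$ is an immaterial variant (only independence from $(\phi_k,B_k,Z_k)_{k>0}$ is needed), and your checks of conditions (i)--(ii), the uniform bound via (A3), and the pointwise bound $\witi X_1\geq Z_1=_D\witi X_0$ are exactly the intended argument.
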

\begin{proof}[Proof of Lemma~\ref{domin}]
Set $\witi X_0=Z_{-1}.$
\end{proof}
In view of Lemma~\ref{ordering}, this implies that we can find a sequence
$\witi X_n$ that solves \eqref{version} and such that $\witi X_n\leq_D \witi X_\infty,$
while $\witi X_0$ satisfies the conditions of Corollary~\ref{cori}.
Combining this result with the conclusion of the corollary yields:
\beq
\liminf_{t\to \infty}h(t)\cdot P(X_\infty>t) \geq \lim_{t\to\infty}h(t)\cdot P\bigl(\witi X_n>t\bigr)=E[c_n],\qquad n\in\nn.
\feq
Hence
\beqn
\label{upper}
\liminf_{t\to \infty} h(t)\cdot P(X_\infty>t) \geq \lim_{n\to\infty} E[c_n]=\frac{1}{1-E[\phi_0^\alpha]}.
\feqn
On other hand, we have
\begin{lemma}
\label{domin1}
Let Assumption~\ref{assume1} hold. There exists a random variable $\witi X_0\geq 0$ satisfying the conditions of Lemma~\ref{klemma},
and such that $\witi X_1\leq_D \witi X_0.$
\end{lemma}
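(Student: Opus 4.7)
The construction I propose is to take $\witi X_0$ to be the stationary value of $\calx$ at time zero, built on the bilateral probability space: set
\beq
\witi X_0:=\sum_{k=-\infty}^0 X_{k,0},
\feq
with $X_{k,0}$ as in \eqref{xkn}. The series converges a.s.\ by \eqref{stationary} and Proposition~\ref{convergence}, which is applicable because Assumption~\ref{assume1} forces $E[Z_0^\beta]<\infty$ for every $\beta\in(0,\alpha)$ and hence implies Assumption~\ref{assume4}. By construction $\witi X_0$ is $\sigma((\phi_k,B_k,Z_k)_{k\leq 0})$-measurable, hence independent of $(\phi_k,B_k,Z_k)_{k>0}$. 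Moreover, the sequence $(\witi X_n)_{n\geq 0}$ produced by \eqref{version4} starting from this $\witi X_0$ is precisely the restriction to $n\geq 0$ of the two-sided stationary version \eqref{stationary}, whence $\witi X_1=_D \witi X_0$, which is much more than the required $\witi X_1\leq_D \witi X_0$. I read the ``conditions of Lemma~\ref{klemma}'' in the statement as the conditions of Corollary~\ref{cori}, mirroring Lemma~\ref{domin}; a literal reading of Lemma~\ref{klemma} would be incompatible with $\witi X_1\leq_D \witi X_0$, since the sum of two regularly varying tails strictly adds mass.

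The substantive remaining step is to verify the quenched tail asymptotic and the uniform bound required by Corollary~\ref{cori}. Since $X_{k,0}=\phi_0\circ\phi_{-1}\circ\cdots\circ \phi_{k+1}\circ Z_k$, iterated application of Lemma~\ref{klemma} under $P_\Phi$ yields
\beq
\lim_{t\to\infty}h(t)\cdot P_\Phi(X_{k,0}>t)=\pi_k^\alpha,\qquad \pi_k:=\prod_{j=k+1}^0 \phi_j,
\feq
and a quenched adaptation of \cite[Lemma~1]{grey}, as invoked in the proof of Corollary~\ref{cori}, provides a uniform bound $\sup_{t>0} h(t)\cdot P_\Phi(X_{k,0}>t)\leq C \pi_k^\alpha$ a.s.\ with $C$ not depending on $k$. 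Assumption \emph{(A1)} gives $E[\phi_0^\alpha]<1$, so $\sum_{k\leq 0}\pi_k^\alpha<\infty$ a.s. Since the $X_{k,0}$ are conditionally independent given $\Phi$, I would extend \eqref{sumrv} to this infinite quenched sum by truncation: for each $N\in\nn$, apply \eqref{sumrv} iteratively to the finite head $A_N:=\sum_{-N\leq k\leq 0}X_{k,0}$, and dominate the remainder $R_N:=\sum_{k<-N}X_{k,0}$ via $\limsup_{t\to\infty} h(t)\cdot P_\Phi(R_N>\eta t)\leq C\eta^{-\alpha}\sum_{k<-N}\pi_k^\alpha$ for every $\eta\in(0,1)$, using the uniform tail bound together with a union estimate and Potter's theorem. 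Sending $t\to\infty$, then $N\to\infty$, and finally $\eta\to 0$, one arrives at
\beq
\lim_{t\to\infty}h(t)\cdot P_\Phi(\witi X_0>t)=\sum_{k\leq 0}\pi_k^\alpha=c_\infty(\Phi)\in(0,\infty)\quad \mbox{a.s.},
\feq
matching the quantity that appears in the recursion \eqref{cnbar}. The global uniform bound $\sup_{t>0}h(t)\cdot P_\Phi(\witi X_0>t)<\infty$ a.s.\ follows along the same lines.

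The principal obstacle is securing the uniform-in-$t$ quenched estimate $h(t) P_\Phi(X_{k,0}>t)\leq C\pi_k^\alpha$ with a constant $C$ controllable uniformly in $\Phi$: the constants produced by \cite[Lemma~1]{grey} in the iterated application to the binomial thinning operator must be tracked through the composition of thinnings. This amounts to an almost-sure Potter-type estimate propagated through Lemma~\ref{klemma}, which is somewhat delicate but standard in the heavy-tail literature.
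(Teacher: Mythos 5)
Your construction takes $\witi X_0$ to be the stationary variable itself, so the ordering $\witi X_1\leq_D\witi X_0$ holds trivially (with equality in law); but the substance of the lemma --- that $\witi X_0$ satisfies the hypotheses under which Corollary~\ref{cori} applies, which is how the lemma is used to finish Theorem~\ref{main} --- is exactly what is not established, and as stated it fails. Corollary~\ref{cori} requires a \emph{deterministic} constant $C$ with $P\bigl(\sup_{t>0}h(t)\cdot P_\Phi(\witi X_0>t)<C\bigr)=1$; its proof is a bounded-convergence argument resting on that constant. For the stationary series your own computation gives the quenched limit $c_\infty(\Phi)=\sum_{k\leq 0}\pi_k^\alpha$, which is a.s.\ finite but unbounded in $\Phi$ whenever the essential supremum of $\phi_0$ equals $1$, so no deterministic $C$ exists and you would need to prove a strengthened, dominated-convergence version of Corollary~\ref{cori} with an integrable random bound. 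Moreover the key estimate you lean on, $\sup_{t>0}h(t)\cdot P_\Phi(X_{k,0}>t)\leq C\pi_k^\alpha$ with $C$ free of $k$ and $\Phi$, is not what \cite[Lemma~1]{grey} gives (that is a Potter-type bound on ratios $h(\lambda t)/h(t)$, not on tails of thinned variables), and it is false in general when the slowly varying factor of $h$ is nontrivial: for $h(t)=t^\alpha/\log t$ and $t$ of order one, $P_\Phi(X_{k,0}\geq 2)\geq c\,P(Z_k>2/\pi_k)$, which is of order $\pi_k^\alpha\log(1/\pi_k)$ and exceeds $C\pi_k^\alpha$ for small $\pi_k$. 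A correct bound of the form $C\pi_k^{\alpha-\veps}$ can be obtained by truncation, a Chernoff bound for the binomial thinning, and Potter's theorem, and summability survives since $E[\phi_0^{\alpha-\veps}]<1$; but none of this is carried out, and it is precisely the hard part: quenched regular variation of the full stationary series is a strengthening of Theorem~\ref{main} itself, so your route front-loads the difficulty the lemma is designed to avoid and leaves it as a sketch.

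For comparison, the paper's proof never touches the stationary law: it sets $Y_0=c_0^{1/\alpha}Z_{-1}$ with a deterministic $c_0>(1-E[\phi_0^\alpha])^{-1}$, notes via \eqref{recs} and Corollary~\ref{cori} that one step of the recursion strictly decreases the tail constant, hence $P(\witi X_1>t)<P(Y_0>t)$ for $t>t_0$, and then repairs the ordering for $t\leq t_0$ by conditioning, i.e.\ taking $\witi X_0$ with $P(\witi X_0>t)=P(Y_0>t\,|\,Y_0>t_0)$; the conditions of Corollary~\ref{cori}, including the deterministic uniform bound, are then immediate from (A2)--(A3) and Grey's Lemma~1. (Your reading that ``conditions of Lemma~\ref{klemma}'' should be understood as the conditions of Corollary~\ref{cori} is consistent with how the paper's own proof proceeds.) If you wish to keep your construction, you must (i) actually prove the quenched tail asymptotics with a summable Potter-corrected bound, and (ii) replace Corollary~\ref{cori} by a version tolerating the random bound $c_\infty(\Phi)$ with $E[c_\infty]<\infty$ --- at which point you will have re-proved Theorem~\ref{main} directly rather than the auxiliary lemma.
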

\begin{proof}[Proof of Lemma~\ref{domin1}]
Given a realization of the sequence $\Phi,$ choose a constant $c_0$ in such a way that
\beq
c_0>\frac{1}{1-E[\phi_0^\alpha]}.
\feq
Let $Y_0=c_0^{1/\alpha} Z_{-1}.$ Then $\lim_{t\to\infty} h(t) \cdot P(Y_0>t)=c_0.$
If we would choose $\witi X_0=Y_0,$ we would have $c_1:=\lim_{t\to\infty} h(t) \cdot P\bigl(\witi X_1>t\bigr) <c_0$ by virtue of \eqref{recs}
and Corollary~\ref{cori}. This would imply that $P\bigl(\witi X_1>t\bigr)<P\bigl(\witi X_0>t\bigr)$ for $t>t_0,$ where $t_0>0$ is a positive constant which depends
on $c_0.$ Consider now (in an enlarged probability space, if needed)
a random variable $\witi X_0$ such that $\witi X_0$ is independent of $(\phi_k,B_k,Z_k)_{k\in\zz}$ and
\beq
P\bigl(\witi X_0>t\bigr)=P(Y_0>t|Y_0>t_0).
\feq
Note that such $\witi X_0$ satisfies the conditions of Corollary~\ref{cori} because
$P_\Phi\bigl(\witi X_0>t\bigr)=P\bigl(\witi X_0>t\bigr)$ with probability one, and for $t>t_0,$
\beq
h(t)\cdot P\bigl(\witi X_0>t\bigr)\leq \frac{1}{P(c_0^\alpha Z_0>t_0)} \cdot \frac{h(t)}{h(t c_0^{-\alpha})}
\Bigl(h(t c_0^{-\alpha}) \cdot P( Z_0>tc_0^{-\alpha})\Bigr),
\feq
and $\sup_{t>0}h(t)/h(t c_0^{-\alpha})<\infty$ (see, for instance, Lemma~1 in \cite{grey}).
Then, for $t>t_0,$
\beq
P\bigl(\phi_1 \circ \witi X_0 + Z_1>t\bigr)&=&P(\phi_1 \circ Y_0 + Z_1>t|Y_0>t_0)
\\
&=&
\frac{P(\phi_1 \circ Y_0 + Z_1>t;Y_0>t_0)}{P(Y_0>t_0)}\leq
\frac{P(\phi_1 \circ Y_0 + Z_1>t)}{P(Y_0>t_0)}
\\
&\leq & \frac{P(Y_0 >t)}{P(Y_0>t_0)}=P(Y_0 >t|Y_0>t_0)=P\bigl(\witi X_0>t\bigr).
\feq
On the other hand, if $t\leq t_0$ then
\beq
P\bigl(\witi X_0>t\bigr)=P\bigl(\witi X_0>t\bigl|\witi X_0>t_0\bigr)=1.
\feq
Thus
\beq
P\bigl(\phi_1 \circ \witi X_0 + Z_1>t\bigr)\leq P\bigl(\witi X_0>t\bigr)
\feq
for all $t>t_0,$ and we can set $\witi X_0$ as the initial value for the recursion.
\end{proof}
Combining this result with Corollary~\ref{cori} yields:
\beq
\limsup_{t\to \infty} h(t)\cdot P(X_\infty>t)\leq \lim_{t\to\infty} h(t)\cdot P_0(X_n>t)=E[c_n],\qquad n\in\nn.
\feq
Hence,
\beq
\limsup_{t\to \infty} h(t)\cdot P(X_\infty>t) \leq \lim_{n\to\infty} E[c_n]=\frac{1}{1-E[\phi_0^\alpha]}.
\feq
The proof of Theorem~\ref{main} is completed in view of \eqref{upper}.
\subsection{Proof of Theorem~\ref{extremes}}
\label{proof-extremes}
For $n\in\nn,$ denote $K_n=\max_{1\leq k\leq n} Z_k.$
It follows from \eqref{version} that $M_n\geq_D K_n.$ To conclude the proof of the theorem, it thus
suffices to show that
\beq
\limsup_{n\to\infty} P_0(M_n> xb_n) \leq \lim_{n\to\infty} P_0(K_n>xb_n)=e^{-x^{-1/\alpha}},\qquad x>0.
\feq
Observe that, under the stationary law $P,$ the branching process (without immigration) originated
by the initial $X_0$ individuals will eventually die out. Therefore, the total number of progeny
of the individuals in the zero generation is $P-\as$ finite. Furthermore, the branching process $X_n-\sum_{k=-\infty}^0 X_{k,n},$
$n\in\nn,$ obtained by excluding the contribution of these individuals from the original one, is distributed
under $P$ as $X_n,$ $n\in\nn,$ under $P_0.$ It thus suffices to show that
\beq
\limsup_{n\to\infty} P(M_n> xb_n) \leq \lim_{n\to\infty} P(K_n>x b_n)=e^{-x^{-1/\alpha}},\qquad x>0.
\feq
Toward this end, define the following events. For $x>0,$ $\delta>0,$ and $\veps\in (0,1/2),$ let
\beq
A_{x,\delta}^{(n)}&=&\{xb_n<M_n\leq x(1+\delta)b_n\},\qquad n\in\nn,
\\
B_{x,\delta,\veps}^{(n)}&=&A_{x,\delta}^{(n)}\bigcap \{x(1-\veps)b_n<K_n\leq x(1+\delta)b_n\},\qquad n\in\nn,
\\
C_{x,\delta,\veps}^{(n,k)}&=&A_{x,\delta}^{(n)}\bigcap \{X_k>xb_n,\,\veps xb_n< Z_k\leq x(1-\veps)b_n\},\qquad n\in\nn,~k=1,\ldots,n,
\\
D_{x,\delta,\veps}^{(n,k)}&=&A_{x,\delta}^{(n)}\bigcap \{X_k>xb_n,\,Z_k\leq x \veps b_n\},\qquad n\in\nn,~k=1,\ldots,n.
\feq
Then
\beqn
\label{cdecomp}
\nonumber
&&
P\bigl(A_{x,\delta}^{(n)}\bigr)
\leq P\bigl(B_{x,\delta,\veps}^{(n)}\bigr)
+P\Bigl(\bigcup_{k=1}^n C_{x,\delta,\veps}^{(n,k)}\Bigr)
+P\Bigl(\bigcup_{k=1}^n D_{x,\delta,\veps}^{(n,k)}\Bigr)
\\
&&
\quad
\leq
P\Bigl(x(1-\veps)b_n<K_n\leq x(1+\delta)b_n\Bigr)
+n\,P\bigl(C_{x,\delta,\veps}^{(n,1)}\bigr)
+n\,P\bigl(D_{x,\delta,\veps}^{(n,1)}\bigr).
\feqn
Taking into account the independence of the pair $(\phi_k,X_{k-1})$ of $Z_k,$
it follows from \eqref{ost}, Assumption~\ref{assume1}, and Lemma~\ref{klemma} that
for any positive constants $\delta,x,\veps>0$
\beqn
\label{lde4}
\limsup_{n\to\infty}\, n\,P\bigl(C_{x,\delta,\veps}^{(n,1)}\bigr)\leq
\lim_{n\to\infty} n\,P\bigl(\phi_1\circ X_0> \veps xb_n,Z_1>\veps xb_n\bigr)=0.
\feqn
Furthermore,
\beqn
\label{lde}
&&
P\bigl(D_{x,\delta,\veps}^{(n,1)}\bigr) \leq P\bigl(\phi_1\circ X_0>(1-\veps) xb_n,X_0\leq x(1+\delta)b_n\bigr)
\nonumber
\\
&&
\qquad
\leq
P\bigl(\phi_1\circ X_0>(1-\veps) xb_n\bigr| X_0\leq x(1+\delta)b_n\bigr)
\leq
P\Bigl(\sum_{i=1}^{\lfloor x(1+\delta)b_n\rfloor} B_{0,i}>(1-\veps) xb_n\Bigr)
\nonumber
\\
&&
\qquad
=E\Bigl[P_\Phi\Bigl(\frac{1}{x(1+\delta)b_n}\sum_{i=1}^{\lfloor x(1+\delta)b_n\rfloor} B_{0,i}>\frac{1-\veps}{1+\delta}\Bigr)\Bigr].
\feqn
Assume now that the constants $\delta>0$ and $\veps>0$ are chosen so small that $\frac{1-\veps}{1+\delta}>E[\phi_0],$ and hence
\beqn
\label{delta}
\frac{1-\veps}{1+\delta}>\eta E[\phi_0]\quad \, \mbox{for some}\quad\,\eta>1.
\feqn
We next derive a simple large-deviations type upper bound for the right-most expression in \eqref{lde}.
Denote $x_0=\frac{1-\veps}{1+\delta}.$ It follows from Chebyshev's inequality that for any $\lambda>0,$
\beq
E\Bigl[P_\Phi\Bigl(\frac{1}{n}\sum_{i=1}^n B_{0,i}>\frac{1-\veps}{1+\delta}\Bigr)\Bigr]
\leq e^{-n\lambda x_0} E\bigl[(1-\phi_0+\phi_0e^\lambda)^n\bigr].
\feq
Thus for all $\lambda>0$ small enough, namely for all $\lambda>0$ such that $e^\lambda<1+\eta\lambda,$ we have
\beq
&&
E\Bigl[P_\Phi\Bigl(\frac{1}{n}\sum_{i=1}^n B_{0,i}>\frac{1-\veps}{1+\delta}\Bigr)\Bigr]
\leq e^{-n\lambda x_0} E\bigl[\bigl(1-\phi_0+\phi_0(1+\eta\lambda) \bigr)^n\bigr]
\\
&&
\qquad
=e^{-n\lambda x_0} E\bigl[(1+\phi_0\eta\lambda)^n\bigr]\leq e^{-n\lambda x_0} E\bigl[ e^{\phi_0\cdot n\eta\lambda}\bigr].
\feq
Therefore, for all $\lambda>0$ small enough we have
\beq
\limsup_{n\to\infty} \frac{1}{n}\log E\Bigl[P_\Phi\Bigl(\frac{1}{n}\sum_{i=1}^n B_{0,i}>\frac{1-\veps}{1+\delta}\Bigr)\Bigr]
\leq -\lambda x_0+\log E\bigl[ e^{\eta\lambda\phi_0}\bigr].
\feq
Given $\eta,$ let $f(\lambda)= \log E\bigl[ e^{\eta\lambda\phi_0}\bigr].$ By the bounded convergence theorem,
$f'(0)=\eta E[\phi_0].$ Hence, in view of \eqref{delta},
\beq
\limsup_{n\to\infty} \frac{1}{n}\log P\Bigl(\frac{1}{n}\sum_{i=1}^n B_{0,i}>\frac{1-\veps}{1+\delta}\Bigr)<0.
\feq
Since $b_n$ is a regularly varying sequence, it follows from \eqref{lde} that
\beqn
\label{lde1}
\lim_{n\to\infty}\, n\,P\bigl(D_{x,\delta,\veps}^{(n,1)}\bigr)=0.
\feqn
Therefore, since $\veps>0$ above can be made arbitrary small (in particular, the left-hand side of \eqref{delta}
is an increasing function of $\veps$), combining \eqref{lde1} together with \eqref{lde4} and \eqref{lde} yields:
\beq
\limsup_{n\to\infty} P\bigl(A_{x,\delta}^{(n)}\bigr) \leq P\bigl(x b_n<K_n\leq x(1+\delta)b_n\bigr),
\feq
and hence
\beq
&&
\limsup_{n\to\infty} P(M_n> xb_n) = \limsup_{n\to\infty} \sum_{k=0}^\infty
P\bigl((1+k\delta)xb_n<M_n\leq (1+k\delta+\delta)xb_n\bigr)
\\
&&
\qquad  \leq \sum_{k=0}^\infty
\limsup_{n\to\infty} P\bigl((1+k\delta)xb_n<M_n\leq (1+k\delta+\delta)xb_n\bigr)
\\
&&
\qquad  \leq \sum_{k=0}^\infty
P\bigl((1+k\delta)xb_n<K_n\leq (1+k\delta+\delta)xb_n\bigr)=P(K_n>xb_n).
\feq
The proof of Theorem~\ref{extremes} is complete.\qed
\subsection{Proof of Theorem~\ref{partial}}
\label{partial-proof}
For $n\in\zz,$ let
\beqn
\label{yn}
Y_n=\sum_{t=n}^\infty X_{n,t}
\feqn
be the total number of progeny at all generations of all the immigrants entered at time $n,$
including the immigrants themselves. Then
\beq
\sum_{k=1}^n X_k&=&\sum_{k=1}^n \sum_{t=0}^k X_{t,k}=\sum_{t=0}^n \sum_{k=t}^n X_{t,k}=\sum_{t=0}^n \Bigl(\sum_{k=t}^\infty X_{t,k}
-\sum_{k=n+1}^\infty X_{t,k}\Bigr)
\\
&=&
\sum_{t=0}^n Y_t-\sum_{t=0}^n\sum_{k=n+1}^\infty X_{t,k}.
\feq
Notice that
\beq
\sum_{t=0}^n\sum_{k=n+1}^\infty X_{t,k}&=_D& \sum_{t=-n}^0\sum_{k=1}^\infty X_{t,k}
\leq \sum_{t=-\infty}^0\sum_{k=1}^\infty X_{t,k}
\\
&=&\sum_{t=-\nu_{-1}}^0\sum_{k=1}^\infty X_{t,k}\leq \sum_{t=-\nu_{-1}}^{\nu_1}Y_t <\infty.
\feq
Hence, in order to show that $S_n/b_n$ converges in distribution, it suffices to show that  $b_n^{-1}\sum_{k=1}^n Y_k$ converges to the same limit.
Note that the sequence $(Y_n)_{n\in\zz}$ has the same distribution under $P_0$ as it has under $P.$
\par
The following series of technical lemmas will enable us to
apply a general stable limit theorem (namely, Theorem~1.1 in \cite{fltg}; see also Corollary~5.7 in \cite{kobus})
to the partial sums of the sequence $Y_n.$
\begin{lemma}
\label{parta}
The sequence $(Y_n)_{n\in\zz}$ is strongly mixing. That is,
$\lim_{n\to\infty} \chi(n)=0,$ where
\beq
\chi(n):=\sup\bigl\{P(A \cap B)-P(A)P(B): A\in \calf^n,B\in
\calf_0\bigr\},
\feq and $\calf^n:=\sigma(Y_i: i\geq n),$ $\calf_n:=\sigma(Y_i: i < n).$
\end{lemma}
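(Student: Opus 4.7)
I will exploit the regenerative decomposition from Proposition~\ref{rect}: at each regeneration time $\nu$ we have $\phi_\nu \circ X_{\nu-1} = 0$, so all progeny of immigrants that arrived strictly before $\nu$ are extinct by time $\nu$, and the process restarts with the fresh immigration wave $Z_\nu$. Consequently, each $Y_i$ is a measurable function of the input data $(\phi_j, Z_j, B_j)$ restricted to indices $j$ lying in the single cycle containing $i$. A single regeneration time falling between $0$ and $n$ is therefore enough to decouple $\calf_0$ from $\calf^n$ exactly.

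Working under the stationary law $P$ on the bi-infinite space, I would let $\nu^+$ denote the smallest regeneration time in $[0,\infty)$ and set $\mathcal{H}_k := \sigma\bigl((\phi_j, B_j)_{j \leq k},\,(Z_j)_{j < k}\bigr)$. The central claim is $\sigma(Y_i : i \geq \nu^+) \perp \calf_0$, which will follow from three observations: (a) the cycle containing any $i<0$ has its right endpoint at most $\nu^+$, so $\calf_0 \subset \mathcal{H}_{\nu^+}$; (b) $\nu^+$ is a stopping time for $(\mathcal{H}_k)$, and by the optional-stopping property for the i.i.d.\ input sequence $(\phi_j, Z_j, B_j)_{j \in \zz}$, the post-$\nu^+$ data $(Z_j)_{j \geq \nu^+}$ and $(\phi_j, B_j)_{j > \nu^+}$ is independent of $\mathcal{H}_{\nu^+}$; (c) this post-$\nu^+$ data determines $(Y_i)_{i \geq \nu^+}$, since the branching process genuinely restarts at $\nu^+$.

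Granted this independence, for any $A \in \calf^n$ and $B \in \calf_0$,
\[
P(A \cap B) = P\bigl(A \cap B;\, \nu^+ \leq n\bigr) + P\bigl(A \cap B;\, \nu^+ > n\bigr) = P\bigl(A;\, \nu^+ \leq n\bigr) P(B) + P\bigl(A \cap B;\, \nu^+ > n\bigr),
\]
where the second equality uses that on $\{\nu^+ \leq n\}$ the event $A$ lies in $\sigma(Y_i : i \geq \nu^+)$. An analogous split of $P(A) P(B)$ yields $|P(A \cap B) - P(A) P(B)| \leq 2 P(\nu^+ > n)$. Under $P$, the random variable $\nu^+$ is the forward recurrence time of the renewal process of regenerations, and since its inter-arrival distribution $\sigma_1$ has exponential tails by \eqref{estimate}, so does $\nu^+$. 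Hence $\chi(n) \leq 2 P(\nu^+ > n) \to 0$ at an exponential rate, establishing strong mixing.

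The main obstacle is formalizing claim (c): each Bernoulli variable $B_{j,k}$ is attached to a specific particle, and one must verify that the $B_{j,k}$'s consumed by the post-$\nu^+$ branching process are disjoint from those used by descendants of pre-$\nu^+$ immigrants. This is handled by the particle-labelling convention implicit in the genealogical-tree construction referenced in Section~\ref{intro}, but the bookkeeping deserves care in the actual write-up.
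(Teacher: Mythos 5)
Your plan is, at its core, the same argument as the paper's: both decouple $\calf_0$ from $\calf^n$ at the first regeneration after time $0$ under the stationary law $P$, and bound the mixing coefficient by the tail of that regeneration time. The implementations differ, though. The paper conditions on the environment $\Phi$ and inserts the event $\{\nu_1\le n/2\}$: given $\Phi$, the past $Y$'s together with $\{\nu_1\le n/2\}$ and the events in $\calf^n$ involve disjoint blocks of innovations and thinning variables, so the probability factorizes up to an error $P(\nu_1>n/2)$; it then proves only that $P(\nu_1<\infty)=1$, via Corollary~\ref{corol} and the ergodic theorem, which already yields $\chi(n)\to0$ (no rate is claimed). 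You instead decompose at the stopping time $\nu^+$ using the strong Markov property of the i.i.d.\ input, which is a clean alternative and would even give an exponential rate --- but subject to the two points below. Your closing worry about the bookkeeping of the $B_{j,k}$'s is legitimate, and it is precisely what the genealogical-tree construction (equivalently, the paper's conditioning on $\Phi$) takes care of.

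Two soft spots. First, the displayed factorization is misplaced: summing over $\{\nu^+=j\}$, $j\le n$, and using that any $A\in\calf^n$ is independent of your $\mathcal{H}_j$ while $\one{B}\one{\nu^+=j}$ is $\mathcal{H}_j$-measurable, one gets $P(A\cap B;\,\nu^+\le n)=P(A)\,P(B;\,\nu^+\le n)$, not $P(A;\,\nu^+\le n)\,P(B)$ (the event $\{\nu^+\le n\}$ is not independent of $B$); your final bound $\chi(n)\le 2P(\nu^+>n)$ survives this correction, but the equality as written does not --- note also that $A$ must be handled through the partition over $\{\nu^+=j\}$, since $A$ is indexed by absolute time and is only ``contained in'' $\sigma(Y_i:i\ge\nu^+)$ on that event. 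Second, the tail claim for $\nu^+$ is the one real burden left undischarged: \eqref{estimate} is a bound under $P_0$, whereas $\nu^+$ lives under $P$. To call $\nu^+$ the forward recurrence time of a stationary renewal process with cycle law $\sigma_1$ you must identify $P$ with the time-stationary version of the regenerative process of Proposition~\ref{rect} (a Palm/regenerative-process argument), and in particular establish that regenerations occur $P$-a.s.\ at all, i.e.\ $P(\nu^+<\infty)=1$ --- exactly the point the paper settles by ergodicity. If you only need $\chi(n)\to0$, that ergodicity argument suffices and the renewal theory can be skipped; if you want the exponential rate, add the equilibrium identity relating $P(\nu^+>n)$ to $\bigl(E_0[\sigma_1]\bigr)^{-1}\sum_{k>n}P_0(\sigma_1\ge k)$ (or an equivalent bound) with justification.
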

\begin{proof}[Proof of Lemma~\ref{parta}]
This is a variation of Lemma~3.2 in \cite{multi}.
For the sake of completeness we give here a suitable modification of the argument.
For $n\in\zz,$ let $\caly_n$ and $\caly^n$ denote, respectively, the sequences $(Y_i)_{i < n}$ and $(Y_i)_{i \geq n}.$
On one, for any $A \in \sigma(Y_i: i >n)$ and $B \in \sigma(Y_i: \leq 0),$ \beq &&
P\bigl(\caly^n \in A, \caly_0 \in B\bigr) \geq P\bigl(\caly^n \in A, \caly_0 \in B, \nu_1 \leq n/2\bigr)\\
&& \qquad =E\bigl[P_\Phi\bigl(\caly_0 \in B, \nu_1 \leq n/2\bigr)\cdot P_\Phi\bigl(\caly^n \in A\bigr)\bigr] \\
&& \qquad \geq P\bigl(\caly_0 \in B, \nu_1 \leq n/2\bigr)\cdot P\bigl(\caly^n \in A\bigr) \\&& \qquad
\geq
P\bigl(\caly_0 \in B\bigr)\cdot P\bigl(\caly^n \in A\bigr)-P(\nu_1 >n/2).
\feq
On the other hand,
\beq
&& P\bigl(\caly^n \in A, \caly_0 \in B\bigr) \leq P\bigl(\caly^n \in A, \caly_0 \in B, \nu_1 \leq n/2\bigr) +P(\nu_1 >n/2)
\\ && \qquad =E\bigl[P_\Phi\bigl(\caly_0 \in B, \nu_1 \leq n/2\bigr)\cdot P_\Phi\bigl(\caly^n \in A\bigr)\bigr]+
P(\nu_1>n/2)
\\&& \qquad \leq P\bigl(\caly_0 \in B, \nu_1 \leq n/2\bigr)\cdot P\bigl(\caly^n \in A\bigr)+
P(\nu_1>n/2)\\&& \qquad \leq P\bigl(\caly_0 \in B\bigr)\cdot P\bigl(\caly^n \in A\bigr)+
P(\nu_1>n/2).
\feq
It thus remains to show that $P(\nu_1<\infty)=1.$ By Proposition~\ref{rect}, we have
$P_0(\nu_1<\infty)=1.$ Since, clearly, $P(\phi_1\circ X_0=0)>0,$ the strong Markov property implies
$P(\nu_1<\infty)>0.$ Since the Markov chain $(X_n,Z_n)$ forms an ergodic process
according to Corollary~\ref{corol}, it follows from the ergodic theorem that the
the two-component Markov chain spends asymptotically a positive proportion of time at the set $\{X_n=Z_n\}$
(one can also appeal directly to the Poincar\'{e} recurrence theorem). This completes
the proof of the lemma.
\end{proof}
In view of the previous lemma we are seeking to apply to $Y_n$ the following general limit theorem for strongly mixing
stationary sequences obtained in \cite{fltg} (see also a similar Corollary~5.7 in \cite{kobus}).
\begin{theorem}
\cite[Theorem~1.1 and Corollary~1.4]{fltg}
\label{stabs}
Let $(Y_n)_{n\in\nn}$ be a stationary strongly mixing sequence of non-negative random variables.
Assume that for some $\alpha\in(0,1),$ there exists $\,h\in \calr_\alpha$ such that $\lim_{t\to\infty} h(t) \cdot P(Y_n>t)=1.$
For $n\in\nn,$ define a process $U_n$ on the Skorokhod space $D(\rr_+,\rr)$ by setting
\beq
U_n(t) =\frac{1}{b_n}\sum_{k=1}^{\lfloor nt\rfloor}Y_k,\qquad t\geq 0,
\feq
where $b_n$ are defined in \eqref{bn}. Then $U_n$ converges weakly in $D(\rr_+,\rr),$ as $n\to\infty,$ to a L\'{e}vy $\alpha$-stable
process if and only if the following local dependence condition holds:
\beqn
\label{ldep}
\mbox{\rm For any}~\veps > 0,~ \mbox{\rm we have:}~\quad \lim_{k\to\infty}\limsup_{n\to\infty} \,n\sum_{j=2}^{\lfloor n/k \rfloor}
P\bigl(Y_j  > \veps b_n, Y_1 > \veps b_n\bigr) = 0.
\feqn
\end{theorem}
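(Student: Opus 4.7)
The plan is to prove the sufficiency direction (\eqref{ldep} $\Rightarrow$ Lévy convergence) by a Bernstein-style big-block/small-block decomposition combined with a point-process representation, and to obtain necessity from the fact that clustering of exceedances is incompatible with a Lévy $\alpha$-stable limit in the $J_1$ topology. First I would handle small jumps by truncation: for $\veps>0$ set $U_n^{(\veps)}(t) = b_n^{-1}\sum_{k\leq \lfloor nt \rfloor} Y_k \one{Y_k > \veps b_n}$ and verify, using Markov's inequality together with the regular variation of $h$ and the assumption $\alpha\in(0,1)$, that $\sup_{t\leq T}|U_n(t)-U_n^{(\veps)}(t)|\to 0$ in probability as first $n\to\infty$ and then $\veps\to 0$. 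The small-jump contribution has expectation of order $\veps^{1-\alpha}$, which vanishes in the iterated limit precisely because $\alpha<1$.

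For fixed $\veps>0$ I would then study the point measure $N_n=\sum_{k\leq n}\delta_{(k/n,\,Y_k/b_n)}$ on $[0,T]\times(\veps,\infty)$ and prove it converges weakly to a Poisson random measure with mean measure $dt\otimes \alpha x^{-\alpha-1}dx$. The technical device is to partition $[0,n]$ into alternating big and small blocks of respective lengths $p_n$ and $q_n$ with $q_n/p_n\to 0$, $p_n/n\to 0$, and $n\chi(q_n)/p_n\to 0$; strong mixing then permits replacing the big-block contributions by independent copies inside Laplace functionals, with total error controlled by $n\chi(q_n)/p_n$. The regular-variation assumption identifies the limiting intensity. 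Simplicity of the limiting point process (no multiple points at the same time coordinate) is the precise content of \eqref{ldep} applied with $k\approx n/p_n$: a union bound gives $\pp(\mbox{double exceedance of level }\veps b_n\mbox{ in one block})\leq p_n\sum_{j=2}^{p_n}\pp(Y_j>\veps b_n, Y_1>\veps b_n)$, and summing over $\lfloor n/p_n\rfloor$ blocks reproduces the hypothesis exactly. A continuous mapping on configurations then yields finite-dimensional convergence of $U_n^{(\veps)}$ to the Lévy process with small jumps removed; combined with the truncation estimate of the previous step, this gives finite-dimensional convergence of $U_n$ to the target $\alpha$-stable process.

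The hard part will be upgrading finite-dimensional convergence to $J_1$-tightness, because the $J_1$ topology severely penalizes near-simultaneous jumps. I would verify Aldous's stopping-time criterion for the truncated process $U_n^{(\veps)}$: for any stopping times $\tau_n\leq \tau_n+\delta_n$ with $\delta_n\to 0$, one needs the increment $U_n^{(\veps)}(\tau_n+\delta_n)-U_n^{(\veps)}(\tau_n)$ to vanish in probability. Decomposing this increment by the number of exceedances of level $\veps b_n$ in $[\tau_n,\tau_n+\delta_n]$ reduces matters to a single-jump term (handled by the already-established point-process limit, since a single jump at the random time $\tau_n$ has limiting conditional probability zero by quasi-left-continuity of the Poisson process) and a double-exceedance probability bounded by exactly the quantity appearing in \eqref{ldep}, with $k$ playing the role of $\delta_n^{-1}$. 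For necessity I would argue contrapositively: if \eqref{ldep} fails along a subsequence, then with asymptotically positive probability there exist two indices within a $\delta$-window carrying values exceeding $\veps b_n$; such configurations force a pre-limit $J_1$-modulus bounded away from zero in arbitrarily short windows, whereas the Lévy $\alpha$-stable limit has jump times forming a simple Poisson process and hence no two jumps in a $\delta$-window with positive probability as $\delta\to 0$.
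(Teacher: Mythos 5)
You should first note that the paper does not prove Theorem~\ref{stabs} at all: it is quoted (up to the inessential non-negativity/integer-valuedness remark the authors make right after it) from Tyran-Kami\'{n}ska \cite{fltg}, so there is no internal proof to compare against and your attempt is a reconstruction of the cited result. Your sufficiency half is a reasonable sketch of the standard point-process route: truncation at level $\veps b_n$, where $\alpha\in(0,1)$ and non-negativity make the small-jump part of order $\veps^{1-\alpha}$ by Karamata; Bernstein blocking plus strong mixing to factor Laplace functionals; \eqref{ldep} with $k\approx n/p_n$ to rule out multiple exceedances per block, so the limit is a simple Poisson random measure with intensity $dt\otimes\alpha x^{-\alpha-1}dx$; then the summation functional. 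Two soft spots there: the block lengths must be chosen compatibly with the unquantified mixing rate (blocks of length $n/k$ with $k\to\infty$ slowly, small blocks $q_n$ with $\chi(q_n)\cdot n/p_n\to0$, which requires an extraction argument since only $\chi(n)\to0$ is assumed), and your Aldous-criterion paragraph mixes pre-limit and limit objects; the cleaner and standard way is to deduce $J_1$ convergence of the truncated sums directly from the point-process convergence via the a.s. continuity of the restricted summation map, and then invoke the uniform truncation estimate.

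The genuine gap is in the necessity direction. Condition \eqref{ldep} bounds the \emph{expected number} of pairs of exceedances of $\veps b_n$ within windows of length $n/k$, whereas your contrapositive only extracts from a $J_1$ stable limit that the \emph{probability} of observing such a pair tends to zero (two $\veps$-jumps at coalescing times are indeed incompatible with a $J_1$ limit whose jump times are a.s. distinct). These are not equivalent: the expected pair count can remain bounded away from zero while the probability of any close pair vanishes, if the mass comes from rare events producing large clusters of exceedances; such events need not distort the marginal tail and, having vanishing probability, do not obstruct weak convergence of $U_n$. Strong mixing does not repair this, since it controls large lags while the relevant pairs sit at short lags. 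So the step ``\eqref{ldep} fails along a subsequence, hence with asymptotically positive probability two indices in a $\delta$-window exceed $\veps b_n$'' is unjustified, and closing it requires either a second-moment/uniform-integrability argument for the exceedance counts or the finer point-process characterization that is the actual content of Theorem~1.1 of \cite{fltg}. As written, your argument establishes (modulo the sufficiency details above) only the ``if'' direction.
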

We remark that the assumption $P(Y_n\in\zz_+)=1$ is actually not needed and is not included in the original version of the above theorem,
as it is stated in \cite{fltg}. It is not hard to verify that in our setting the random variable $Y_1$ has regularly varying distribution tails under the law $P_\Phi.$
To transform this statement into a corresponding claim under $P$ we will need the following a-priori bound.
\begin{lemma}
\label{parta3}
Let Assumption~\ref{assume1} hold. Then
\beqn
\label{310}
\limsup_{x\to\infty}\, h(x)\cdot P(Y_1>x)=C<\infty,
\feqn
where $C\in(0,\infty)$ is a positive constant whose value depends on the distribution of $\phi_0$ but not on the distribution of $Z_0$
(as long as Assumption~\ref{assume1} holds and $h(x)$ is defined as in (A2)).
\end{lemma}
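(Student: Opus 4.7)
The idea is to express $Y_1$ as a random sum and then apply a Breiman-type argument conditionally on the environment $\Phi$. Since every particle in the model produces at most one offspring, each immigrant arriving at time $1$ contributes a single lineage whose size equals its own lifetime; thus $Y_1=\sum_{i=1}^{Z_1}L_i$, where, conditionally on $\Phi$, the $L_i$ are i.i.d.\ with $P_\Phi(L_1\geq k)=\prod_{j=2}^{k}\phi_j$ for $k\geq 2$. Averaging over $\Phi$ yields $P(L_1\geq k)=E[\phi_0]^{k-1}$, which decays geometrically thanks to assumption~(A1). In particular $L_1$ has finite moments of every order under $P$, and the same is true of the conditional mean $q_\Phi:=E_\Phi[L_1]=1+\sum_{k\geq 2}\phi_2\cdots\phi_k$ and of the conditional higher moments $E_\Phi[L_1^{p}]$; this follows from $E[\phi_0^r]<1$ for every $r>0$ combined with Minkowski's inequality applied term by term to the defining series.

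For a fixed $A>1$ I would split
\beq
P(Y_1>x)\leq P\bigl(Z_1>x/(Aq_\Phi)\bigr)+P\bigl(Y_1>x,\,Z_1\leq x/(Aq_\Phi)\bigr),
\feq
and treat the two pieces separately. For the main term, the independence of $Z_1$ and $\Phi$ together with a Potter-type uniform bound on the regularly varying function $h$ (see \cite[Lemma~1]{grey}, already used in the proof of Corollary~\ref{cori}) permits one to interchange the limit with the expectation over $\Phi$, giving $\lim_{x\to\infty}h(x)\cdot P\bigl(Z_1>x/(Aq_\Phi)\bigr)=A^\alpha E[q_\Phi^\alpha]$, a finite constant depending only on $\alpha$ and the law of $\phi_0$. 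For the error term, on the event $\{Z_1=m\leq x/(Aq_\Phi)\}$ the conditional mean $mq_\Phi$ of $Y_1$ is at most $x/A$, so $\{Y_1>x\}$ forces the centered random walk $\sum_{i=1}^{m}(L_i-q_\Phi)$ to exceed $x(1-1/A)$. Applying Markov's inequality of order $2p$ combined with a Rosenthal (or Marcinkiewicz--Zygmund) bound conditionally on $\Phi$, and then summing over $m$ against the law of $Z_1$ (using Karamata's theorem to handle the truncated moments $E[Z_1^p;Z_1\leq T]$ when $p>\alpha$) and averaging over $\Phi$, produces an overall contribution of order $x^{-p}h(x)^{-1}$. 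Choosing $p>\alpha$ makes this $o(h(x)^{-1})$ and hence negligible after multiplication by $h(x)$. Combining the two estimates and letting $A\downarrow 1$ yields
\beq
\limsup_{x\to\infty}h(x)\cdot P(Y_1>x)\leq E[q_\Phi^\alpha]<\infty,
\feq
with the constant depending on the law of $\phi_0$ but not on the distribution of $Z_0$.

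The main technical difficulty is keeping the error term negligible uniformly in $\Phi$ for every $\alpha>0$, including values of $\alpha$ larger than $2$, which forces one to use moments of order strictly greater than $2\alpha$ and to verify that the various $\Phi$-dependent moment constants, once combined with the truncated moments of $Z_1$ (whose unconditional mean may be infinite), remain $P$-integrable. The independence of $Z_1$ from $\Phi$, together with the fact that the conditional moments of $L_1$ given $\Phi$ are all controlled by moments of $q_\Phi$, which in turn have every moment finite under $P$, is precisely what makes the truncation argument close up for every $\alpha>0$.
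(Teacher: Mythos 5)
Your argument is correct in substance, but it follows a genuinely different route from the paper. The paper slices $Y_1$ by generation: writing $Y_1=\sum_{n\geq 1}X_{1,n}$ and $x=x(1-\gamma)\sum_n\gamma^{n-1}$, it bounds $P(Y_1>x)\leq\sum_n P\bigl(X_{1,n}>x(1-\gamma)\gamma^{n-1}\bigr)$, applies Lemma~\ref{klemma} to each generation (giving the factor $\bigl(E[\phi_0^\alpha]\bigr)^{n-1}$) and sums the resulting geometric series after choosing $\gamma$ with $\gamma^\alpha>E[\phi_0^\alpha]$ --- a short argument producing a crude but $Z_0$-independent constant. You instead slice $Y_1$ by immigrant, via the lifetime representation $Y_1=\sum_{i=1}^{Z_1}L_i$ with $P_\Phi(L_1\geq k)=\prod_{j=2}^k\phi_j$ (legitimate here because each particle has at most one offspring; the paper uses essentially this representation in Lemma~\ref{parta4}), and run a conditional Breiman/Nagaev-type truncation: the dominant event is $\{Z_1>x/(Aq_\Phi)\}$, handled by \eqref{potterlike}, (A3) and dominated convergence, while the complementary event is killed by a Rosenthal--Markov bound of order $2p>2\alpha$ together with Karamata for the truncated moments of $Z_1$ and the $P$-integrability of the $\Phi$-dependent constants. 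What your approach buys is the sharper constant $E[q_\Phi^\alpha]=E\bigl[\bigl(1+\sum_i\prod_j\phi_j\bigr)^\alpha\bigr]$, which is in fact the exact limit identified later in Lemma~\ref{parta1}, so your computation would also shortcut part of the upper bound there; the price is substantially more bookkeeping than the paper's few lines. Two small repairs: your phrase that the conditional moments of $L_1$ are ``controlled by moments of $q_\Phi$'' is not literally right --- bound $\bigl(E_\Phi[L_1^{2p}]\bigr)^{1/(2p)}\leq\sum_k\bigl(\prod_{j=2}^k\phi_j\bigr)^{1/(2p)}$ by conditional Minkowski and note that this series, like $q_\Phi$, has all $P$-moments finite because $E[\phi_0^r]<1$ for every $r>0$; and, like the paper, you only prove the finite upper bound with a constant depending solely on the law of $\phi_0$, the strict positivity in the statement being immediate from $Y_1\geq Z_1$ and (A2), which is all the lemma is used for.
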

\begin{proof}[Proof of Lemma~\ref{parta3}]
For any $x>0$ and $\gamma\in(0,1),$
\beq
P(Y_1>x)=P\Bigl(\sum_{n=1}^\infty X_{1,n}> x(1-\gamma)\sum_{n=1}^\infty \gamma^{n-1}\Bigr)
\leq
\sum_{n=1}^\infty  P\bigl(X_{1,n}>x\gamma^{n-1} (1-\gamma)\bigr).
\feq
Therefore,
\beq
&&
\limsup_{x\to\infty} h(x)\cdot P(Y_1>x)
\leq \sum_{n=1}^\infty \limsup_{x\to\infty} h(x)\cdot P\bigl(X_{1,n}>x\gamma^{n-1} (1-\gamma)\bigr)
\\
&&
\qquad
=
\sum_{n=1}^\infty \limsup_{x\to\infty} \frac{h(x)}{h\bigl(x\gamma^{n-1} (1-\gamma)\bigr)} \cdot h\bigl(x\gamma^{n-1} (1-\gamma)\bigr)
\cdot P\bigl(X_{1,n}>x\gamma^{n-1} (1-\gamma)\bigr)
\\
&&
\qquad
=
\sum_{n=1}^\infty \gamma^{-\alpha(n-1)}(1-\gamma)^{-\alpha} \cdot \limsup_{x\to\infty}  h(x)\cdot P(X_{1,n}>x).
\feq
Applying Lemma~\ref{klemma} to the right-most expression in this inequality, we obtain
by virtue of the bounded convergence theorem that
\beq
&&\limsup_{x\to\infty} h(x)\cdot P(Y_1>x)
\leq \sum_{n=1}^\infty \gamma^{-\alpha(n-1)}(1-\gamma)^{-\alpha} \cdot E\Bigl[\lim_{x\to\infty} h(x)\cdot P_\Phi(X_{1,n}>x)\Bigr]
\\
&&
\qquad
=\sum_{n=1}^\infty \bigl(\gamma^{-\alpha} \cdot E[\phi_0^\alpha]\bigr)^{n-1} (1-\gamma)^{-\alpha}.
\feq
Choosing now $\gamma\in(0,1)$ such that $\gamma>E[\phi_0^\alpha]$ concludes the proof of the lemma. To justify the above application
of the bounded convergence theorem, observe that $X_{1,n}\leq Z_1$ and $Z_1$ is independent of $\Phi.$
\end{proof}
In order to study the exact asymptotic of the distribution tails of $Y_1,$ it is convenient to approximate $Y_1$ by $Y_1^{(m)},$ where
\beq
Y_n^{(m)}:=\sum_{k=n}^{n+m} X_{n,k},\qquad n\in\zz.
\feq
We have:
\begin{lemma}
\label{parta4}
Let Assumption~\ref{assume1} hold. Then
\beqn
\label{soon}
\lim_{x\to\infty} h(x)\cdot P(Y_1^{(m)}>x)=E\Bigl[\Bigr(1+\sum_{i=1}^m \prod_{j=1}^i  \phi_j\Bigl)^\alpha\Bigr],
\feqn
for any $m\in\nn.$
\end{lemma}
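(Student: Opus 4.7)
The plan is to exploit the branching-process interpretation: $Y_1^{(m)}$ is a random sum of $Z_1$ bounded contributions, one per immigrant arriving at time~$1,$ so that its tail can be analyzed by the same quenched method used for Lemma~\ref{klemma}. For $i=1,\ldots,Z_1,$ let $V_i$ be one plus the number of indices $k\in\{2,\ldots,1+m\}$ at which the $i$-th immigrant of time~$1$ is still alive. Because the immigrants evolve independently given $\Phi,$
\beq
Y_1^{(m)}=\sum_{i=1}^{Z_1}V_i,
\feq
and, conditional on $\Phi,$ the variables $V_i$ are i.i.d., take values in $\{1,\ldots,m+1\},$ and satisfy
\beq
E_\Phi[V_1]=1+\phi_2+\phi_2\phi_3+\cdots+\prod_{j=2}^{1+m}\phi_j=1+\sum_{i=1}^m\prod_{j=2}^{1+i}\phi_j.
\feq
Moreover, $Z_1$ is independent of both $\Phi$ and $(V_i)_{i\in\nn}$ and has regularly varying tails by Assumption~\ref{assume1}.

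The first step will be to apply, conditionally on $\Phi,$ the generalization of Lemma~\ref{klemma} described in Remark~\ref{remark1} to the random sum $\sum_{i=1}^{Z_1}V_i.$ The quenched Chernoff bound required by that argument is immediate, since the $V_i$ are uniformly bounded and hence have finite moment generating function everywhere. This yields, for almost every $\Phi,$
\beq
\lim_{x\to\infty}h(x)\cdot P_\Phi(Y_1^{(m)}>x)=(E_\Phi[V_1])^\alpha=\Bigl(1+\sum_{i=1}^m\prod_{j=2}^{1+i}\phi_j\Bigr)^\alpha.
\feq

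The second step is to average over $\Phi$ by bounded convergence. The deterministic domination $Y_1^{(m)}\leq(m+1)Z_1,$ combined with condition~(A3) and the boundedness of $h(x)/h(x/(m+1))$ as $x\to\infty$ (see Lemma~1 in \cite{grey}), furnishes a finite constant $C>0$ for which
\beq
h(x)\cdot P_\Phi(Y_1^{(m)}>x)\leq h(x)\cdot P(Z_0>x/(m+1))\leq C
\feq
uniformly in $x>0$ and $\Phi.$ Bounded convergence then gives
\beq
\lim_{x\to\infty}h(x)\cdot P(Y_1^{(m)}>x)=E\Bigl[\Bigl(1+\sum_{i=1}^m\prod_{j=2}^{1+i}\phi_j\Bigr)^\alpha\Bigr],
\feq
and the claimed formula follows upon invoking the stationarity of the i.i.d.\ sequence $\Phi,$ which makes the joint law of $(\phi_2,\ldots,\phi_{1+m})$ coincide with that of $(\phi_1,\ldots,\phi_m).$

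The only step that requires care is the first one: verifying that the proof of Lemma~\ref{klemma} genuinely adapts to the quenched setting in which the Bernoulli summands are replaced by the integer-valued $V_i.$ Inspection of that proof shows that only two properties of the summands are ever used, namely (i) regular variation of the tail of the number of summands (provided here by the distribution of $Z_1$) and (ii) a Chernoff-type large-deviation upper bound for the partial sums $\sum_{k=1}^n V_k.$ Both are immediately available under $P_\Phi,$ so that once this quenched reduction is stated carefully, the remainder of the argument is routine.
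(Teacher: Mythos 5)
Your proposal is correct and follows essentially the same route as the paper: both decompose $Y_1^{(m)}$ as a random sum over the $Z_1$ immigrants of bounded per-immigrant contributions, apply the quenched extension of Lemma~\ref{klemma} from Remark~\ref{remark1} (the Chernoff bound being trivial for bounded summands) to get the conditional tail limit $\bigl(E_\Phi[V_1]\bigr)^\alpha$, and then pass to the annealed statement by bounded convergence using the domination by a constant multiple of $Z_1$ together with (A3). The only cosmetic difference is your explicit index shift via stationarity of $\Phi$ at the end, which the paper handles implicitly.
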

\begin{proof}[Proof of Lemma~\ref{parta4}]
Note that
\beq
Y_n^{(m)}=\sum_{k=1}^{Z_n}\Bigl(1+\sum_{i=1}^m B_{n,k}^{(i)}\Bigr),
\feq
where $B_{n,k}^{(i)}$ is the number of progeny (either zero or one) of the $k$-th immigrant at generation $n,$
who is present (or not) at the system at generation $n+i.$ Then an
argument similar to the one which we have employed in order to prove Lemma~\ref{klemma} (see also Remark~\ref{remark1})
along with \eqref{sumrv} ensure that
\beq
\lim_{x\to\infty} h(x)\cdot P_\Phi\bigl(Y_0^{(m)}>x\bigr)=\Bigl(1+\sum_{i=1}^m E_\Phi\bigl[B_{0,1}^{(i)}\bigr]\Bigr)^\alpha
=\Bigr(1+\sum_{i=1}^m \prod_{j=1}^i  \phi_j\Bigl)^\alpha.
\feq
Since $Y_0^{(m)}\leq mZ_0$ and $Z_0$ is independent of $\Phi,$ the bounded convergence theorem yields
\beq
\lim_{x\to\infty} h(x)\cdot P\bigl(Y_0^{(m)}>x\bigr)=E\bigl[\lim_{x\to\infty} h(x)\cdot P_\Phi\bigl(Y_0^{(m)}>x\bigr)\bigr]
=
E\Bigl[\Bigr(1+\sum_{i=1}^m \prod_{j=1}^i  \phi_j\Bigl)^\alpha\Bigr],
\feq
completing the proof of the lemma.
\end{proof}
Combining together the results of Lemmas~\ref{parta3} and~\ref{parta4} we can deduce the following:
\begin{lemma}
\label{parta1}
Let Assumption~\ref{assume1} hold. Then
\beq
\lim_{x\to\infty} h(x)\cdot P(Y_1>x)=E\Bigl[\Bigr(1+\sum_{i=1}^\infty\prod_{j=1}^i  \phi_j\Bigl)^\alpha\Bigr]<\infty.
\feq
\end{lemma}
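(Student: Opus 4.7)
The plan is to approximate $Y_1$ by the finite truncations $Y_1^{(m)}$, for which Lemma~\ref{parta4} already supplies the exact tail asymptotic, and to control the remainder $R_m := Y_1 - Y_1^{(m)} = \sum_{k\geq m+2} X_{1,k}$ via the geometric splitting trick already employed in the proof of Lemma~\ref{parta3}. The target constant will arise as the monotone limit in $m$ of the right-hand side of \eqref{soon}, and its finiteness will follow \emph{a posteriori} from Lemma~\ref{parta3}.

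The lower bound is essentially free. Since $Y_1 \geq Y_1^{(m)}$, Lemma~\ref{parta4} yields $\liminf_{x\to\infty}\, h(x)\, P(Y_1 > x) \geq E\bigl[\bigl(1+\sum_{i=1}^m \prod_{j=1}^i \phi_j\bigr)^\alpha\bigr]$ for every $m \in \nn$, and monotone convergence in $m$ closes this half. For the upper bound I would fix $\veps, \gamma \in (0,1)$ and use the inclusion $\{Y_1 > x\} \subset \{Y_1^{(m)} > (1-\veps) x\} \cup \{R_m > \veps x\}$. The first event contributes $(1-\veps)^{-\alpha} E\bigl[\bigl(1+\sum_{i=1}^m \prod_{j=1}^i \phi_j\bigr)^\alpha\bigr]$ in the limit via Lemma~\ref{parta4} and the regular variation of $h$.

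For $R_m$, the elementary identity $\sum_{k \geq m+2} (1-\gamma) \gamma^{k-m-2} = 1$ furnishes the union bound $P(R_m > x) \leq \sum_{k \geq m+2} P\bigl(X_{1,k} > x (1-\gamma) \gamma^{k-m-2}\bigr)$. Iterating Lemma~\ref{klemma} under $P_\Phi$ produces $\lim_x h(x) P_\Phi(X_{1,k} > x) = (\phi_2 \cdots \phi_k)^\alpha$, and since $X_{1,k} \leq Z_1$ the bounded convergence theorem (applied exactly as in the proof of Lemma~\ref{parta3}) then gives $\lim_x h(x) P(X_{1,k} > x) = E[\phi_0^\alpha]^{k-1}$. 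A term-by-term application of regular variation therefore leads to
$\limsup_x h(x) P(R_m > x) \leq (1-\gamma)^{-\alpha} \sum_{k\geq m+2} \gamma^{-\alpha(k-m-2)} E[\phi_0^\alpha]^{k-1} = (1-\gamma)^{-\alpha} E[\phi_0^\alpha]^{m+1}\big/\bigl(1 - \gamma^{-\alpha} E[\phi_0^\alpha]\bigr)$, provided $\gamma$ is chosen with $\gamma^\alpha > E[\phi_0^\alpha]$. Since (A1) forces $E[\phi_0^\alpha] < 1$, such $\gamma$ exists and this bound tends to $0$ as $m \to \infty$. Letting first $m \to \infty$ (monotone convergence on the $Y_1^{(m)}$ contribution) and then $\veps \to 0$ matches the lower bound.

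The delicate step will be the termwise passage to the limit in the tail estimate for $R_m$: to commute $\limsup_x$ with the infinite sum I need a uniform-in-$x$ dominating sequence, and this is supplied by a Potter-type bound for $h$ together with assumption (A3), exactly in the spirit of the estimate appearing in the proof of Corollary~\ref{cori}. Once this dominating majorant is in place, the rest is a two-parameter $(\veps,m)$ sandwich argument, and finiteness of the right-hand side of the claimed identity is automatic from Lemma~\ref{parta3}.
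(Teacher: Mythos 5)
Your skeleton coincides with the paper's: the lower bound via $Y_1\geq Y_1^{(m)}$, Lemma~\ref{parta4} and monotone convergence; the upper bound via the splitting $\{Y_1>x\}\subset\{Y_1^{(m)}>(1-\veps)x\}\cup\{Y_1-Y_1^{(m)}>\veps x\}$ together with a proof that the tail coefficient of the remainder vanishes as $m\to\infty$; and finiteness of the limit constant (the paper gets it directly by Jensen's inequality, $E\bigl[\bigl(1+\sum_{i\geq1}\prod_{j\leq i}\phi_j\bigr)^\alpha\bigr]\leq(1-E[\phi_0])^{-\alpha}$ for $\alpha\in(0,1)$, but your a posteriori appeal to Lemma~\ref{parta3} works too). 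The one real divergence is how the remainder $R_m=Y_1-Y_1^{(m)}$ is controlled. The paper observes that $R_m$ is distributed as $Y_1$ under the modified law in which the immigration variable has the law of $\Pi_{m+1}\circ Z_0$, whose normalized tail constant is $\bigl(E[\phi_0^\alpha]\bigr)^{m+1}$ by Lemma~\ref{klemma}; it then invokes \eqref{310}, whose constant depends only on the law of $\phi_0$ (this is precisely why that remark appears in Lemma~\ref{parta3}), to conclude $\limsup_{x\to\infty}h(x)\cdot P\bigl(Y_1-Y_1^{(m)}>x\bigr)\leq C\bigl(E[\phi_0^\alpha]\bigr)^{m+1}\to0$ with no further computation. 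You instead re-run the geometric splitting from the proof of Lemma~\ref{parta3} directly on $R_m$, which yields the same conclusion with an explicit constant, at the price of having to re-justify an analytic interchange.

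That interchange is where your justification is thinner than claimed. A Potter-type bound plus (A3) give only the uniform-in-$x$ majorant $h(x)\cdot P\bigl(X_{1,k}>\lambda_k x\bigr)\leq C\,K\bigl(\lambda_k^{-\alpha}+\lambda_k^{\alpha}\bigr)$ with $\lambda_k=(1-\gamma)\gamma^{k-m-2}$, and this sequence grows geometrically in $k$; it is therefore not summable and cannot serve as the dominating sequence needed to commute $\limsup_{x\to\infty}$ with the infinite sum over $k\geq m+2$. The geometric decay $\bigl(E[\phi_0^\alpha]\bigr)^{k-1}$ that makes your series converge is only produced \emph{after} letting $x\to\infty$ term by term, so the step "$\limsup_x$ of the series $\leq$ series of the $\limsup_x$'s" needs an additional ingredient (e.g. truncating the series at a finite cut-off and estimating the tail of the series uniformly in $x$ by a separate argument); the same interchange is implicit in the proof of Lemma~\ref{parta3} itself, which is presumably why the paper's own proof of the present lemma simply reuses that result instead of repeating the computation. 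The cleanest repair is exactly the paper's shortcut: identify $Y_1-Y_1^{(m)}$ with $Y_1$ under the modified immigration law and quote Lemma~\ref{parta3} there. With that replacement (or with a properly justified interchange) your argument is correct and delivers the stated identity.
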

\begin{proof}[Proof of Lemma~\ref{parta1}]
First, observe that the lower bound
\beq
\lim_{x\to\infty} h(x)\cdot P(Y_1>x)\geq \lim_{m\to\infty} \lim_{x\to\infty} h(x)\cdot P\bigl(Y_1^{(m)}>x\bigr)
=E\Bigl[\Bigr(1+\sum_{i=1}^\infty\prod_{j=1}^i  \phi_j\Bigl)^\alpha\Bigr]
\feq
holds by virtue of Lemma~\ref{parta4} and the monotone convergence theorem.
\par
To prove the matching upper bound, notice that the difference $Y_1-Y_1^{(m)}$ is distributed under the law $P$ as $Y_1$
is distributed under the law $Q,$ where $Q$ is defined in the same way as $P$ with the only exception that in the former case the
distribution of $Z_n$ is assumed to be that of $\Pi_{m+1}\circ Z_0$ under $P.$ Furthermore, since $\Pi_{m+1}\circ Z_0\leq Z_0,$
Lemma~\ref{klemma} and the bounded convergence theorem imply that
\beq
\lim_{x\to\infty} h(x)\cdot P\bigl(\Pi_{m+1}\circ Z_0>x\bigr)=\bigl(E[\phi_0^\alpha]\bigr)^{m+1}.
\feq
It follows then from \eqref{310} with the probability measure $P$ replaced by $Q,$ that
\beq
\lim_{m\to\infty} \limsup_{x\to\infty} h(x)\cdot P\bigl(Y_1-Y_1^{(m)}>x\bigr)=0.
\feq
Thus, using again Lemma~\ref{parta4} and the monotone convergence theorem, we obtain that the following
holds for any $\veps>0:$
\beq
&&\limsup_{x\to\infty} h(x)\cdot P(Y_1>x)
\\
&&
\qquad
\leq \lim_{m\to\infty} \Bigl\{\lim_{x\to\infty} h(x)\cdot P\bigl(Y_1^{(m)}>x(1-\veps)\bigr)+
\limsup_{x\to\infty} h(x)\cdot P\bigl(Y_1-Y_1^{(m)}>x\veps\bigr)\Bigr\}
\\
&&
\qquad
=\lim_{m\to\infty} \lim_{x\to\infty} h(x)\cdot P\bigl(Y_1^{(m)}>x(1-\veps)\bigr)
=E\Bigl[\Bigr(1+\sum_{i=1}^\infty\prod_{j=1}^i  \phi_j\Bigl)^\alpha\Bigr]\cdot (1-\veps)^{-\alpha}.
\feq
Taking $\veps\to 0$ yields the desired upper bound. To conclude the proof of the lemma it remains to note
that by Jensen's inequality,
\beq
E\Bigl[\Bigr(1+\sum_{i=1}^\infty\prod_{j=1}^i  \phi_j\Bigl)^\alpha\Bigr]\leq
\Bigr(E\Bigl[1+\sum_{i=1}^\infty\prod_{j=1}^i  \phi_j\Bigr]\Bigl)^\alpha=
\bigr(1-E[\phi_0]\bigr)^{-\alpha}<\infty,
\feq
where we used the assumption $\alpha\in(0,1).$
\end{proof}
We are now in a position to complete the proof of Theorem~\ref{partial}.
It suffices to verify that the conditions of Theorem~\ref{stabs} hold for the sequence $(Y_n)_{n\geq 1}.$
In view of Lemmas~\ref{parta} and~\ref{parta1}, we only need to check the validity of the ``local dependence" condition
\eqref{ldep}. To this end, observe that for any $j\geq 2,$ $Y_j$ and $Y_1$ are independent under the law $P_\phi,$
and hence Cauchy-Schwarz inequality yields
\beq
&&
P\bigl(Y_j  > \veps b_n, Y_1 > \veps b_n\bigr) =
E\bigl[P_\Phi(Y_j > \veps b_n)\cdot P_\Phi(Y_1  > \veps b_n) \bigr]
\leq E\bigl[P_\Phi^2(Y_1 > \veps b_n)\bigr] .
\feq
An argument similar to the one we employed to prove Lemma~\ref{parta1} shows then that the
following limit exists and the identity holds:
\beq
\lim_{x\to\infty} h(x)^2\cdot  E\bigl[P_\Phi^2(Y_1 > x)\bigr]=E\Bigl[\Bigr(1+\sum_{i=1}^\infty\prod_{j=1}^i  \phi_j\Bigl)^{2\alpha}\Bigr]<\infty.
\feq
Thus
\beq
\lim_{n\to\infty} n^2\cdot  E\bigl[P_\Phi^2(Y_1 > \veps b_n)\bigr]=\veps^{-2\alpha}\cdot E\Bigl[\Bigr(1+\sum_{i=1}^\infty\prod_{j=1}^i  \phi_j\Bigl)^{2\alpha}\Bigr]<\infty,
\feq
and
\beq
&&
\lim_{k\to\infty}\limsup_{n\to\infty} \,n\sum_{j=2}^{\lfloor n/k \rfloor} P\bigl(Y_j  > \veps b_n, Y_1 > \veps b_n\bigr)
\leq
\lim_{k\to\infty}\limsup_{n\to\infty} \frac{n^2}{k}\cdot  E\bigl[P_\Phi^2(Y_1 > \veps b_n)\bigr]
\\
&&
\qquad
=
\lim_{k\to\infty}\limsup_{n\to\infty} \frac{n^2}{k}\cdot n^{-2}\veps^{-2\alpha}\cdot E\Bigl[\Bigr(1+\sum_{i=1}^\infty\prod_{j=1}^i  \phi_j\Bigl)^{2\alpha}\Bigr] =0,
\feq
as desired. The proof of Theorem~\ref{partial} is completed. \qed
\subsection{Proof of Lemma~\ref{tails}}
\label{tails-proof}
Recall $Y_n$ from \eqref{yn}. Define
\beq
Q_n=X_n+\mbox{total progeny of the $X_n$ particles present at generation}~n.
\feq
For all $A>0$ define its stopping time $\varsigma_A=\inf\{n: X_n>A\}.$
The random variable $W_1$ can be represented on the event
$\{\varsigma_A< \nu_1\}$ in the following form:
\beqn
\label{piruk-matrif}
W_1=\sum_{n=0}^{\varsigma_A-1}X_n+ Q_{\varsigma_A}+\sum_{\varsigma_A <n <  \nu_1} Y_n.
\feqn
The three terms in the right-hand side
of \eqref{piruk-matrif} are evaluated in the following series of
lemmas. It will turn out that for large $A,$ the main contribution to $ W_1$ in
\eqref{piruk-matrif} comes from the second term.
Fix any $\delta>0.$ It follows from \eqref{estimate} that for any $A>0,$
\beq
P_0\Bigl(\sum_{n=0}^{\min\{\varsigma_A, \nu_1\}-1} X_n \geq \delta t  \Bigr)
\leq P_0(A \nu_1 \geq \delta t)\leq K_1 e^{-K_2 \delta t/A},
\feq
and hence
\beqn
\label{dereh1}
P_0( W_1 \geq \delta t,\varsigma_A \geq \nu_1) \leq P(A \nu_1 \geq \delta t)\leq K_1 e^{-K_2 \delta t/A},
\feqn
\beqn
\label{dereh2}
P_0\Bigl(\sum_{n=0}^{\varsigma_A-1}X_n \geq \delta t,\varsigma_A <  \nu_1\Bigr)
\leq P_0(A \nu_1 \geq \delta t) \leq K_1 e^{-K_2\delta t/A}.
\feqn
\begin{lemma}
\label{vai}
For all $\delta>0$ there exists an
$A_0=A_0(\delta)<\infty$ such that
\beqn
\label{cor17}
h(t) \cdot P_0\Bigl(\sum_{\varsigma_A < n < \nu_1 }Y_n \geq \delta t\Bigr) \leq \delta, \qquad \mbox{\rm for all}~A \geq A_0~\mbox{\rm and}~ t>0.
\feqn
\end{lemma}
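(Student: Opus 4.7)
The plan is to exploit the strong Markov property at the stopping time $\varsigma_A,$ together with the exponential tail of regeneration times from Proposition~\ref{rect}(b) and the regularly varying tails of $Y_n$ supplied by Lemmas~\ref{parta3} and~\ref{parta1}. The critical feature that makes such a decomposition clean is that $Y_n=\sum_{k\geq n} X_{n,k}$ depends on the driving sequences only through $Z_n$ and $(\phi_k,B_{k,\cdot})_{k>n},$ so $Y_n$ is independent of the initial condition $X_0.$ Consequently, by the strong Markov property at $\varsigma_A,$ the sequence $(Y_{\varsigma_A+j})_{j\geq 1}$ conditional on $\calf_{\varsigma_A}$ is distributed as a fresh unconditional copy of $(Y_j)_{j\geq 1}.$ The sum is empty on $\{\varsigma_A\geq \nu_1\},$ so attention reduces to $\{\varsigma_A<\nu_1\};$ moreover, $P_0(\varsigma_A<\nu_1)\to 0$ as $A\to\infty,$ because $\sup_{0\leq n<\nu_1} X_n<\infty$ $P_0$-a.s.\ by Proposition~\ref{rect}(a).

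The range of $t$ is then split in two. For $t$ in a bounded interval $(0,t_0],$ the trivial estimate $h(t)\cdot P_0(\sum\geq \delta t)\leq h(t_0)\cdot P_0(\varsigma_A<\nu_1)$ suffices once $A\geq A_0(\delta).$ For $t>t_0,$ introduce an auxiliary parameter $L$ and write
\beq
P_0\Bigl(\sum_{\varsigma_A<n<\nu_1} Y_n\geq \delta t,\,\varsigma_A<\nu_1\Bigr)\leq P_0\bigl(\nu_1-\varsigma_A>L,\,\varsigma_A<\nu_1\bigr)+P_0\Bigl(\sum_{n=\varsigma_A+1}^{\varsigma_A+L} Y_n\geq \delta t,\,\varsigma_A<\nu_1\Bigr).
\feq
The first summand is handled by an adaptation of the Kozlov-type argument behind Proposition~\ref{rect}(b), yielding an exponential bound $P_x(\nu_1>L)\leq \tilde K_1 e^{-\tilde K_2 L}$ uniform in the starting state $x;$ combined with strong Markov this gives a term bounded by $\tilde K_1 e^{-\tilde K_2 L}\cdot P_0(\varsigma_A<\nu_1).$ The second summand factorizes by strong Markov into $P_0(\varsigma_A<\nu_1)\cdot P\bigl(\sum_{j=1}^L Y_j\geq \delta t\bigr),$ and the tail of this latter sum is estimated via the single-big-jump principle for regularly varying (and hence subexponential) sums, together with the uniform bound $\sup_{s>0} h(s)P(Y_1>s)<\infty$ from Lemma~\ref{parta3} and a Potter-type inequality to control the ratio $h(t)/h(\delta t).$

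The main obstacle is the uniform-in-$t$ control, since multiplying by $h(t)$ introduces a factor that grows with $t.$ The plan is to choose $L=L(t)$ to grow slowly with $t$ (of order $\log h(t),$ with a prefactor large enough that $h(t)\cdot e^{-\tilde K_2 L}\to 0$ as $t\to\infty$), and then absorb the residual $L$-dependent prefactor coming from the tail estimate on $\sum_{j=1}^L Y_j$ by choosing $A$ sufficiently large that $P_0(\varsigma_A<\nu_1)$ dominates it. Alternatively, one may first compute $\lim_{t\to\infty} h(t)\cdot P_0(\sum\geq \delta t)$ explicitly (it is a constant proportional to $E[\nu_1\cdot\one{\varsigma_A<\nu_1}],$ which vanishes as $A\to\infty$ by dominated convergence thanks to the exponential tail of $\nu_1$) and then promote this pointwise limit to a uniform bound via a tightness/dominated convergence argument. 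Verifying the uniform-in-$x$ exponential bound $P_x(\nu_1>L)\leq \tilde K_1 e^{-\tilde K_2 L},$ which is a natural extension of the Kozlov argument sketched in the appendix for the case $X_0=0,$ is the key auxiliary ingredient.
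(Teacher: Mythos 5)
Your structural observations are sound and match the paper's starting point: $Y_n$ does not depend on the initial condition, the sum is nonempty only on $\{\varsigma_A<\nu_1\}$, the $Y$'s after $\varsigma_A$ decouple from $\calf_{\varsigma_A}$, and $P_0(\varsigma_A<\nu_1)\to 0$ as $A\to\infty$. The quantitative scheme built on them, however, has a genuine gap. Your key auxiliary ingredient, a uniform-in-the-starting-state bound $P_x(\nu_1>L)\le\tilde K_1e^{-\tilde K_2L}$, is false: on $\{\varsigma_A<\nu_1\}$ the chain sits at a state $X_{\varsigma_A}>A$, and from a large state regeneration typically requires at least order $\log x$ steps. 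For instance, if $\phi_0\in[p,1]$ a.s. with $p>0$, then $P_x(\nu_1\le L)\le E\bigl[\bigl(1-\prod_{i=1}^L\phi_i\bigr)^x\bigr]\le(1-p^L)^x\to0$ as $x\to\infty$ for each fixed $L$, so $\sup_xP_x(\nu_1>L)=1$ and the first summand of your split cannot be bounded by $\tilde K_1e^{-\tilde K_2L}\,P_0(\varsigma_A<\nu_1)$. Even if you replace it by the correct bound $P_0(\nu_1-\varsigma_A>L,\varsigma_A<\nu_1)\le P_0(\nu_1>L)\le K_1e^{-K_2L}$, which carries no factor $P_0(\varsigma_A<\nu_1)$, the absorption step does not close: to beat $h(t)$ in the first summand you must let $L=L(t)\to\infty$, but then the second summand is bounded by $h(t)P_0(\varsigma_A<\nu_1)P\bigl(\sum_{j=1}^{L(t)}Y_j\ge\delta t\bigr)\le C\delta^{-\alpha}L(t)^{1+\alpha}P_0(\varsigma_A<\nu_1)$ (union bound plus Potter; a Kesten-type subexponential bound gives a factor $(1+\epsilon)^{L(t)}$ instead, which is no better), and no fixed $A_0(\delta)$ can absorb a prefactor that grows with $t$; taking $L$ constant instead makes the first summand blow up with $h(t)$. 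Your fallback (compute $\lim_t h(t)P_0(\cdot)$ for fixed $A$, then upgrade) could be repaired using the pathwise monotonicity in $A$ of the sum together with local boundedness of $h$, but computing that limit is exactly the unaddressed difficulty: a heavy-tailed sum with a random, unbounded number $\nu_1-\varsigma_A$ of terms.

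The paper resolves the $t$-uniformity and the $A$-smallness in a single stroke, and this is the idea your two-term split loses. Writing $\delta t\ge\tfrac12\delta t\sum_{n\ge1}n^{-2}$ and using the independence of $Y_n$ from $\one{\varsigma_A<n<\nu_1}$, it bounds $h(t)\,P_0\bigl(\sum_{\varsigma_A<n<\nu_1}Y_n\ge\delta t\bigr)$ by $\sum_{n\ge1}P_0(\varsigma_A<n<\nu_1)\cdot h(t)\,P_0\bigl(Y_n\ge\tfrac12\delta tn^{-2}\bigr)$; the Potter-type inequality \eqref{potterlike} and the uniform tail bound for $Y_1$ (Lemmas~\ref{parta3} and~\ref{parta1}) make the $n$-th term at most $Cn^{2\alpha}(\delta^{-\alpha}+\delta^{\alpha})P_0(\varsigma_A<n<\nu_1)$ uniformly in $t>0$, so the whole sum is controlled by $E_0\bigl[\nu_1^{2\alpha+1};\varsigma_A<\nu_1\bigr]\le\sqrt{E_0\bigl[\nu_1^{4\alpha+2}\bigr]}\sqrt{P_0(\varsigma_A<\nu_1)}$, which tends to $0$ as $A\to\infty$ by the exponential bound \eqref{estimate}. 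Thus every term of the estimate simultaneously carries the decay in $n$ (through the event $\{n<\nu_1\}$) and the indicator of $\{\varsigma_A<\nu_1\}$; coupling these two smallness mechanisms term by term, rather than in two separate summands, is the missing ingredient in your proposal.
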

\begin{proof}[Proof of Lemma~\ref{vai}]
\label{proof-vai}
Using the identity $\sum_{n=1}^\infty n^{-2}=\pi^2/6 <2$ and the fact that $Y_n$ is independent of
$\one{\varsigma_A < n < \nu_1}$ under the law $P_0,$ we obtain that the following holds for all $t>0:$
\beqn
\label{almost}
&&
h(t) \cdot P_0\Bigl(\sum_{\varsigma_A < n < \nu_1}Y_n \geq \delta t \Bigr)
=
h(t) \cdot P_0\Bigl(\sum\limits_{n=1}^\infty Y_n \one{\varsigma_A < n < \nu_1}
\geq 6\delta t \pi^{-2} \sum\limits_{n=1}^\infty n^{-2}\Bigr)
\\
\nonumber
&&
\qquad
\leq \sum\limits_{n=1}^\infty P_0(\varsigma_A < n<\nu_1)\cdot h(t) \cdot P_0\bigl(Y_n \geq 1/2 \cdot \delta t n^{-2}\bigr)
\\
\nonumber
&&
\qquad
\leq \sum\limits_{n=1}^\infty P_0(\varsigma_A < n<\nu_1)\cdot\frac{ h(t)}{h( 1/2 \cdot \delta t n^{-2})} \cdot h( 1/2 \cdot \delta t n^{-2}) \cdot P_0\bigl(Y_n \geq 1/2 \cdot \delta t n^{-2}\bigr)
\feqn
To bound the term $\frac{ h(t)}{h( 1/2 \cdot \delta t n^{-2})},$ we apply the following simplified version of \cite[Lemma~1]{grey}:
\beqn
\label{potterlike}
\mbox{There exists}~K>1~\mbox{such that}~\frac{h(\lambda t)}{h(t)}\leq K(\lambda^{-\alpha}+\lambda^{\alpha})~\mbox{for all}~\lambda>0,\,t>0.
\feqn
It follows that
\beq
\frac{ h(t)}{h( 1/2 \cdot \delta t n^{-2})}\leq K 2^\alpha n^{2\alpha} (\delta^{-\alpha}+\delta^\alpha),\qquad t>0.
\feq
Using Lemma~\ref{parta1} and \eqref{potterlike}, we obtain from \eqref{almost} and the above bound
that the following holds for all $t>0$ with a suitable constant $C>0$ independent of $n,\delta,A$ and $t:$
\beq
&&
h(t) \cdot P_0\Bigl(\sum_{\varsigma_A < n < \nu_1}Y_n \geq \delta t \Bigr)
\leq C 2^\alpha t^{-\alpha}(\delta^{-\alpha}+\delta^\alpha)E_0\bigl[ \nu_1^{2\alpha+1}; \varsigma_A< \nu_1\bigr]
\\
&&
\qquad
\leq C 2^\alpha t^{-\alpha}(\delta^{-\alpha}+\delta^\alpha)\sqrt{E_0\bigl(\nu_1^{4\alpha+2}\bigr)}\cdot \sqrt{P_0(\varsigma_A < \nu_1)}.
\feq
The claim follows now from \eqref{estimate}, the first square root being bounded and the second one going to zero as $A
\to \infty.$
\end{proof}
It follows from \eqref{piruk-matrif}, taking estimates
\eqref{dereh1}, \eqref{dereh2} and \eqref{cor17} into account,
that for any $A>A_0(\delta)$ (where $A_0$ is given by
\eqref{cor17}) there exists $t_A>0$ such that
\beqn
\label{approx-w}
&&
h(t)\cdot P_0(\varsigma_A < \nu_1,Q_{\varsigma_A} \geq t)
\leq
h(t)\cdot P_0( W_1 \geq t)
\nonumber
\\
&&
\qquad
\leq
h(t)\cdot P_0(\varsigma_A<\nu_1,Q_{\varsigma_A} \geq t(1-2\delta))+3\delta,
\feqn
for all $t>t_A.$ Thus, $ W_1$ can be approximated by $Q_{\varsigma_A}.$
The following lemma deals with the distribution tails of the latter.
\begin{lemma}
\label{parta14}
Let Assumption~\ref{assume1} hold. Then:
\item [(a)] We have:
\beqn
\label{partials}
\limsup_{A\to\infty}\, \limsup_{t\to\infty}
h(t)\cdot P_0 \bigl(X_{\varsigma_A}\geq t, \varsigma_A < \nu_1\bigr) <\infty.
\feqn
\item [(b)] The following limit exists and is finite for any given $A>0:$
\beq
\lim_{t\to\infty} h(t)\cdot P_0 \bigl(X_{\varsigma_A}\geq t, \varsigma_A < \nu_1\bigr).
\feq
\item [(c)] The following limit exists and is finite for any given $A>0:$
\beq
\lim_{t\to\infty} h(t)\cdot P_0 \bigl(Q_{\varsigma_A}\geq t, \varsigma_A < \nu_1\bigr).
\feq
\end{lemma}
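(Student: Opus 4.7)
The plan uses the key pointwise bound $Z_{\varsigma_A}\leq X_{\varsigma_A}\leq Z_{\varsigma_A}+A,$ valid on $\{\varsigma_A<\infty\}$ because $X_{\varsigma_A-1}\leq A$ by the very definition of $\varsigma_A,$ together with the strong Markov property at the stopping time $\varsigma_A.$

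For parts (a) and (b), I decompose over $\{\varsigma_A=n\}$ and exploit that $Z_n$ is independent of the $\sigma$-algebra generated by $\{\phi_j,Z_j,B_{j,\cdot}:j<n\}\cup\{\phi_n,B_{n,\cdot}\}.$ For $t>2A,$ the event $\{Z_n\geq t-A,\,X_{n-1}\leq A\}$ already forces $\{X_n>A\};$ summing and rewriting gives
\beq
P_0(Z_{\varsigma_A}\geq t-A,\,\varsigma_A<\nu_1)=P(Z_0\geq t-A)\cdot E_0[\min(\varsigma_A,\nu_1-1)],
\feq
where the expectation arises from $\sum_{n\geq 1} P_0(\varsigma_A\geq n,\nu_1>n)=E_0[\min(\varsigma_A,\nu_1-1)]\leq E_0[\nu_1]<\infty$ by \eqref{estimate}. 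The sandwich $P_0(Z_{\varsigma_A}\geq t,\varsigma_A<\nu_1)\leq P_0(X_{\varsigma_A}\geq t,\varsigma_A<\nu_1)\leq P_0(Z_{\varsigma_A}\geq t-A,\varsigma_A<\nu_1),$ combined with the regular variation of $h$ (so that $h(t)/h(t-A)\to 1$) and with (A3), yields (a) with the $A$-free bound $E_0[\nu_1]$ and simultaneously identifies the limit in (b) as $E_0[\min(\varsigma_A,\nu_1-1)].$

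For part (c), the plan is to mimic Lemma~\ref{parta1}. The strong Markov property at $\varsigma_A$ makes $\witi\Phi:=(\phi_{\varsigma_A+j},B_{\varsigma_A+j,\cdot})_{j\geq 1}$ independent of $\calf_{\varsigma_A}$ with the same law as $(\phi_j,B_{j,\cdot})_{j\geq 1},$ so on $\{\varsigma_A<\nu_1\}$ we have $Q_{\varsigma_A}=\sum_{k=0}^\infty\witi\Pi_k\circ X_{\varsigma_A},$ where $\witi\Pi_k$ is built from $\witi\Phi$ exactly as $\Pi_k$ is built in \eqref{pik} and $\witi\Pi_0\circ X=X.$ Truncating to $Q_{\varsigma_A}^{(m)}:=\sum_{k=0}^m\witi\Pi_k\circ X_{\varsigma_A},$ I would condition on $\witi\Phi$ and apply the Lemma~\ref{klemma} argument in the generalized form indicated by Remark~\ref{remark1}. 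The tail input for $X_{\varsigma_A}\one{\varsigma_A<\nu_1}$ (which is independent of $\witi\Phi$) is supplied by part (b); bounded convergence in $\witi\Phi,$ justified uniformly in $t$ by the crude domination $Q_{\varsigma_A}^{(m)}\leq(m+1)X_{\varsigma_A}$ together with part (a), then produces
\beq
\lim_{t\to\infty} h(t)\cdot P_0\bigl(Q_{\varsigma_A}^{(m)}\geq t,\,\varsigma_A<\nu_1\bigr)=E_0[\min(\varsigma_A,\nu_1-1)]\cdot E\Bigl[\Bigl(1+\sum_{i=1}^m\prod_{j=1}^i\phi_j\Bigr)^\alpha\Bigr].
\feq
Monotone convergence as $m\to\infty$ gives the lower bound for $Q_{\varsigma_A};$ the matching upper bound reduces to establishing $\lim_{m\to\infty}\limsup_{t\to\infty}h(t)\cdot P_0(Q_{\varsigma_A}-Q_{\varsigma_A}^{(m)}\geq\veps t,\,\varsigma_A<\nu_1)=0$ for every $\veps>0.$

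I expect this last tail estimate---the analog of Lemma~\ref{parta3} in the present setting---to be the main obstacle. The natural route is the geometric-weight decomposition used in the proof of Lemma~\ref{parta3}, combined with the iterated Lemma~\ref{klemma} consequence $\lim_{t\to\infty} h(t)\cdot P_0(\witi\Pi_k\circ X_{\varsigma_A}\geq t,\,\varsigma_A<\nu_1)=(E[\phi_0^\alpha])^k\cdot E_0[\min(\varsigma_A,\nu_1-1)],$ whose summability is guaranteed by $E[\phi_0^\alpha]<1$ (implied by (A1)). The uniform-in-$t$ control required to interchange $\lim_t$ with $\sum_k$ and with the outer expectation will follow from a variant of \cite[Lemma~1]{grey}, in parallel with the argument used in the proof of Corollary~\ref{cori}.
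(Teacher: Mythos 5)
Your proposal is correct, and in substance it follows the same two-step strategy as the paper: handle the tail of $X_{\varsigma_A}$ on $\{\varsigma_A<\nu_1\}$ by exploiting the independence of $Z_n$ from the thinning variables and the bound $E_0[\nu_1]<\infty$ from \eqref{estimate}, and then treat $Q_{\varsigma_A}$ as the total progeny of the $X_{\varsigma_A}$ individuals via the strong Markov property at $\varsigma_A,$ i.e.\ rerun the machinery of Lemmas~\ref{klemma}, \ref{parta3}, \ref{parta4}, \ref{parta1}. The differences are in bookkeeping. For (a)--(b) the paper decomposes over the value $a$ of $\phi_n\circ X_{n-1}$ on $\{\varsigma_A=n,\nu_1>n\}$ and writes the probability as $\sum_a P(Z_0>t-a)$ times a convergent series bounded by $E_0[\nu_1]$; you instead use the sandwich $Z_{\varsigma_A}\leq X_{\varsigma_A}\leq Z_{\varsigma_A}+A$ and the forcing argument for $t>2A,$ which yields the exact identity $P_0(Z_{\varsigma_A}\geq t-A,\varsigma_A<\nu_1)=P(Z_0\geq t-A)\,E_0[\min(\varsigma_A,\nu_1-1)]$ and hence an explicit value of the limit in (b), namely $E_0[\min(\varsigma_A,\nu_1-1)]$ (which agrees with the series $\sum_{n\geq1}P_0(M_{n-1}\leq A,\nu_1>n)$ implicit in the paper); this is slightly cleaner and the explicit constant is a small bonus, though the lemma only requires existence and finiteness. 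For (c) the paper simply states that the claim is Lemma~\ref{parta1} applied to $X_{\varsigma_A}$ under the conditional law given $\{\varsigma_A<\nu_1\},$ which is exactly what you reconstruct: truncation to $Q_{\varsigma_A}^{(m)},$ the Lemma~\ref{klemma}/Remark~\ref{remark1} argument conditionally on the post-$\varsigma_A$ environment, bounded convergence via $Q_{\varsigma_A}^{(m)}\leq(m+1)X_{\varsigma_A}$ and part (a), and monotone convergence in $m.$ The remainder estimate you flag as the main obstacle is not a new difficulty: it is precisely the content of Lemma~\ref{parta3} (whose constant does not depend on the distribution of the ``immigration'' variable) applied with the initial variable $\Pi_{m+1}\circ X_{\varsigma_A},$ whose tail constant decays like $(E[\phi_0^\alpha])^{m+1}$ by Lemma~\ref{klemma}; this is the same device the paper uses for $Y_1-Y_1^{(m)}$ in the proof of Lemma~\ref{parta1}, so your geometric-weight plan indeed closes the argument.
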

\begin{proof}[Proof of Lemma~\ref{parta14}]
$\mbox{}$
\\
{\bf (a)} Recall $M_n$ from \eqref{mn}. For $t>A$ we have:
\beq
&&
P_0 \bigl( X_{\varsigma_A}>t;\varsigma_A<\nu_1\bigr)
=
\sum_{n \geq 1} \sum_{a=0}^{A-1} P_0 \bigl( X_{\varsigma_A}>t,
\varsigma_A =n, X_n>A, X_{n-1}=a,\nu_1>n\bigr)
\\
&&
\qquad
=
\sum_{n \geq 1} \sum_{a=0}^{A-1} P_0 \bigl( X_n>t, M_{n-1}<A, X_{n-1}=a,\nu_1>n\bigr)
\\
&&
\qquad
\leq
\sum_{n \geq 1}
P \bigl( Z_n>t-A,M_{n-1}<A, \nu_1>n\bigr)
\\
&&
\qquad
=
\sum_{n \geq 1}
P\bigl( Z_n>t-A) \cdot P(M_{n-1}<A, \nu_1>n\bigr)
\leq  P \bigl( Z_0>t-A) \cdot E_0[\nu_1].
\feq
In view of Assumption~\ref{assume1} and \eqref{estimate}, this completes the proof of part (a).
\\
$\mbox{}$
\\
{\bf (b)}  The computation is quite similar to the one in part (a).
Namely, for $t>A$ we have:
\beq
&&
P_0 \bigl( X_{\varsigma_A}>t;\varsigma_A<\nu_1\bigr)
=
\sum_{n \geq 1} \sum_{a=1}^{A-1} P_0 \bigl( Z_n>t-a, \phi_n \circ X_{n-1}=a, M_{n-1}<A,\nu_1>n\bigr)
\\
&&
\qquad
=\sum_{n \geq 1} \sum_{a=1}^{A-1} P(Z_0>t-a)\cdot P_0 \bigl(\phi_n \circ X_{n-1}=a, M_{n-1}<A,\nu_1>n\bigr).
\\
&&
\qquad
=\sum_{a=1}^{A-1} P(Z_0>t-a)\cdot \sum_{n \geq 1} P_0 \bigl(\phi_n \circ X_{n-1}=a, M_{n-1}<A,\nu_1>n\bigr)
\feq
As before,
\beq
\sum_{n \geq 1} P_0\bigl(\phi_n \circ X_{n-1}=a,M_{n-1}<A, \nu>n\bigr)\leq \sum_{n \geq 1} P_0(\nu_1>n)=E[\nu_1]<\infty,
\feq
from which the claim of part (b) follows in view of Assumption~\ref{assume1}.
\\
$\mbox{}$
\\
{\bf (c)} This is merely Lemma~\ref{parta1} applied to $X_{\varsigma_A}$ under the conditional law $P(\,\cdot\,|\,\varsigma_A<\nu_1)$ rather
than to $Z_1$ under the regular measure $P.$
\end{proof}
We are now in a position to conclude the proof of Lemma~\ref{tails}.
It follows from \eqref{approx-w}, \eqref{potterlike}, and Lemma~\ref{parta14} that
\beq
\lim_{t \to \infty} h(t)\cdot P_0( W_1 > t)
=\lim_{A \to \infty} \lim_{t\to\infty} h(t)\cdot P_0\bigr(Q_{\varsigma_A} > t;\varsigma_A<\nu_1\bigr)<\infty.
\feq
The second limit, taken as $A\to\infty,$ in the right-hand side exists since the limit in the left-hand side does not depend of $A.$
Furthermore, by Assumption~\ref{assume1},
\beq
\lim_{t \to \infty} h(t)\cdot P_0( W_1 > t)\geq \lim_{t \to \infty} h(t)\cdot P( Z_1 > t)>0,
\feq
concluding the proof of Lemma~\ref{tails}. \qed
\section*{Appendix: Proof of two auxiliary propositions}
\subsection*{Proof of Proposition~\ref{convergence}}
{\bf (a)} By Jensen's inequality, if $E[Z_0^\beta]<\infty$ for $\beta>0,$ then 
$E[Z_0^{\beta/m}]<\infty$ for any $m\in\nn.$ Therefore, without loss of generality we can assume that $\beta \in (0,1)$ in Assumption~\ref{assume4}.
Assuming from now on and throughout the proof of part (a) of Proposition~\ref{convergence} that $\beta \in (0,1),$ we obtain 
by virtue of Jensen's inequality for conditional expectations that
\beqn
\label{estim}
&&
E_0\bigl[(\Pi_k\circ Z_k)^\beta\bigr]=E_0\bigl[E_0\bigl[(\Pi_k\circ Z_k)^\beta|\Phi,\calz\bigr]\bigr]
\leq E_0\bigl[\bigl(E_0\bigl[\Pi_k\circ Z_k|\Phi,\calz\bigr]\bigr)^\beta\bigr]
\nonumber
\\
&&
\qquad
=E\Bigl[\Bigl(\prod_{j=1}^k \phi_j \cdot Z_k\Bigr)^\beta\Bigr]=
E[Z_0^\beta]\cdot \bigl(E[\phi_0^\beta]\bigr)^k.
\feqn
Hence
\beq
E[X_\infty^\beta]=E\Bigl[\Bigl(\sum_{k=0}^\infty X_{0,k}\Bigr)^\beta\Bigr]\leq \sum_{k=0}^\infty E\bigl[X_{0,k}^\beta\bigr]
\leq E[Z_0^\beta]\cdot \sum_{k=0}^\infty  \bigl(E[\phi_0^\beta]\bigr)^k<\infty.
\feq
In particular, $X_\infty$ is $P-\as$ finite.
\\
$\mbox{}$
\\
{\bf (b)} For $n\in \nn,$ we have \beq
X_n=\sum_{k=1}^n X_{k,n}+X^{(0,n)},
\feq
where $X^{(0,n)}=_P\Pi_n \circ X_0.$ Since $P\bigl(\lim_{n\to\infty} \Pi_n\circ X_0=0\bigr)=1$ for any $X_0\in\caln_+,$ the limiting
distribution of $X_n,$ if exists, is independent of $X_0.$ Furthermore, if $X_0=0,$ the i.i.d. structure of $(\Phi,\calz)$ yields:
\beq
X_n=_P \sum_{k=-n+1}^0 X_{k,0}=_P Z_0+\sum_{k=1}^{n-1} \Pi_k \circ Z_k,
\feq
The claim of part (b) follows now from the almost sure convergence of the series on the right-hand side of the above identity to $X_\infty.$
\\
$\mbox{}$
\\
{\bf (c)}
To see that the stationary distribution is unique, consider two stationary solutions $\bigr(X_n^{(1)}\bigr)_{n\in\zz_+}$ and
$\bigr(X_n^{(2)}\bigr)_{n\in\zz_+}$ to \eqref{version}
corresponding to different initial values, $X_n^{(1)}$ and $X_n^{(2)},$ respectively. Then, since
$\Pi_n$ are ``thinning" operators,
\beq
|X_n^{(1)}-X_n^{(2)}|\leq \Pi_{n+1}\circ |X_0^{(1)}-X_0^{(2)}|,
\feq
and hence
\beq
\lim_{n\to\infty} \bigl(X_n^{(1)}-X_n^{(2)}\bigr)=0,\qquad P-\as
\feq
The proof of the proposition is complete. \qed
\subsection{Proof of Proposition~\ref{rect}}
{\bf (a)} By Corollary~\ref{corol}, $P_0\bigl(X_n=k_{\min}~\io\bigr)=1,$ and hence $P_0(\nu_n<\infty)=1$ for all $n\in\nn.$
The argument showing that the pairs $(\sigma_n,W_n)_{n\in\nn}$ form an i.i.d. sequence is standard (cf. \cite{renewal-appr})
and is based on the following two observation along with the use of the strong Markov property:
\begin{itemize}
\item [(i)] The random times $\nu_n$ are times of the successive visits to the set $\{(x,y)\in\zz^2:x=y\}$ by the two-component Markov chain
$(X_n,Z_n)_{n\in\nn}.$ Furthermore, $X_{\nu_n}=Z_{\nu_n}=_P Z_0.$
\item [(ii)] Transition kernel of the Markov chain $(X_n,Z_n)$ depends only on the current value of the first component,
but not on the value of the second.
\end{itemize}
$\mbox{}$
\\
{\bf (b)} In order to prove part (b) of the proposition, it suffices to show that the
following power series has a radius of convergence greater than $1:$
\beq
V(z)= \sum_{t=0}^\infty P_0(\sigma_1> t)z^t.
\feq
Let us introduce some notation. Let $v(t)= P_0(\sigma_1 > t),$
\beq
h(r, t)= P \Bigl(X_{r, t} \neq 0,\, \sum_{j=r+1}^{t-1} X_{j, t} = 0 \Bigr)
\qquad \mbox{and} \qquad
g(r, t)= P \Bigl(\sum_{j=r}^{t-1} X_{j, t} = 0 \Bigr).
\feq
Then
\beqn
\label{bp22}
v(t) &=& P_0(\sigma_1 > t, X_t\neq 0)= P_0\Bigl(\sigma_1>t, \sum_{k=0}^{t-1} X_{k,t}\neq 0\Bigr)
\nonumber
\\
&=&
\sum_{k=0}^{t-2} P_0\Bigl(\sigma_1>t, X_{k,t}\neq 0, \sum_{j=k+1}^{t-1} X_{k,t}=0\Bigr)
+ P_0(\sigma_1 > t, X_{t-1,t}\neq 0).
\feqn
Using the i.i.d. structure of the sequence of random coefficients $(\Phi,Z),$
we obtain:
\beqn
\label{bp23}
&& P_0\Bigl(\sigma_1>t, X_{k,t}\neq 0, \sum_{j=k+1}^{t-1} X_{k,t}=0\Bigr)
=P_0\Bigl(\sigma_1>k, X_{k,t}\neq 0, \sum_{j=k+1}^{t-1} X_{k,t}=0\Bigr)
\nonumber
\\
&&
\qquad
=P_0(\sigma_1>k)P\Bigl( X_{k,t}\neq 0, \sum_{j=k+1}^{t-1} X_{k,t}=0\Bigr)= v(k)h(k, t)
\feqn
and
\beqn
\label{bp24}
g(k, t) = P(X_{t - k} = 0).
\feqn
Let $g(t) = P_0(X_t = 0).$ It follows from \eqref{bp24} that
\beqn
\label{bp25}
g(k, t) = g(t-k ).
\feqn
Next, let $h(t) =g(t - 1)-g(t).$ Since $h(k, t) + g(k, t) =g(k + 1, t),$ then \eqref{bp25} implies that
\beqn
\label{bp26}
h(k,t) =h(t-k).
\feqn
Substituting \eqref{bp23} into \eqref{bp22} and then using \eqref{bp26} gives
\beq
v(t) = \sum_{k=0}^{t-1}v(k)h(t - k).
\feq
In addition, we have $v(0) = 1,$ $h(k) > 0$ for all $k > 0,$ and
\beq
\sum_{k=1}^\infty h(k)= 1 -\lim_{t\to\infty}g(t) =  1 -\lim_{t\to\infty}  P_0(X_t = 0)  < 1,
\feq
where for the last inequality we used Theorem~3.3. Therefore, $\{v(k): k = 0, 1, 2,\ldots \}$ is a renewal sequence.
Therefore (see \cite[Section~XIII.3]{fbook1}), $V(z) = \bigl(1 - H(z)\bigr)^{-1},$ where
\beq
 H(z):= \sum_{t=0}^\infty h(t)z^t.
\feq
To conclude the proof of the proposition, observe that, using \eqref{estim} and Chebyshev's inequality,
\beq
&&
h(t) = h(0, t) < P(X_{0,t}\neq 0) \leq E[X_{0,t}]= E[Z_0^\beta]\cdot \bigl(E[\phi_0^\beta]\bigr)^k,
\feq
and hence the radius of convergence of $H(z)$ is greater than $1.$ \qed
\providecommand{\bysame}{\leavevmode\hbox to3em{\hrulefill}\thinspace}

\end{document}